\def\RSthmtxt{theorem~}\newref{thm}{name = \RSthmtxt}}
\def\RSlemtxt{lemma~}\newref{lem}{name = \RSlemtxt}}
\numberwithin{equation}{section}
\numberwithin{figure}{section}
\numberwithin{table}{section}
\theoremstyle{plain}
\newtheorem{thm}{\protect\theoremname}[section]
  \theoremstyle{remark}
  \newtheorem{rem}[thm]{\protect\remarkname}
  \theoremstyle{definition}
  \newtheorem{defn}[thm]{\protect\definitionname}
  \theoremstyle{plain}
  \newtheorem{lem}[thm]{\protect\lemmaname}
  \theoremstyle{plain}
  \newtheorem{cor}[thm]{\protect\corollaryname}
  \theoremstyle{definition}
  \newtheorem{example}[thm]{\protect\examplename}
  \theoremstyle{remark}
  \newtheorem*{rem*}{\protect\remarkname}
  \theoremstyle{plain}
  \newtheorem*{question*}{\protect\questionname}
  \theoremstyle{remark}
  \newtheorem*{acknowledgement*}{\protect\acknowledgementname}
\setlist[enumerate]{itemsep=5pt,topsep=3pt}
\setlist[enumerate,1]{label=(\roman*),ref=\roman*}
\setlist[enumerate,2]{label=(\alph*),ref=\theenumi \alph*}
  \providecommand{\acknowledgementname}{Acknowledgement}
  \providecommand{\corollaryname}{Corollary}
  \providecommand{\definitionname}{Definition}
  \providecommand{\examplename}{Example}
  \providecommand{\lemmaname}{Lemma}
  \providecommand{\questionname}{Question}
  \providecommand{\remarkname}{Remark}
\providecommand{\theoremname}{Theorem}
\begin{document}

\title{Reflection positivity and spectral theory}

\author{Palle Jorgensen and Feng Tian}

\address{(Palle E.T. Jorgensen) Department of Mathematics, The University
of Iowa, Iowa City, IA 52242-1419, U.S.A. }

\email{palle-jorgensen@uiowa.edu}

\urladdr{http://www.math.uiowa.edu/\textasciitilde{}jorgen/}

\address{(Feng Tian) Department of Mathematics, Hampton University, Hampton,
VA 23668, U.S.A.}

\email{feng.tian@hamptonu.edu}
\begin{abstract}
We consider reflection-positivity (Osterwalder-Schrader positivity,
O.S.-p.) as it is used in the study of renormalization questions in
physics. In concrete cases, this refers to specific Hilbert spaces
that arise before and after the reflection. Our focus is a comparative
study of the associated spectral theory, now referring to the canonical
operators in these two Hilbert spaces. Indeed, the inner product which
produces the respective Hilbert spaces of quantum states changes,
and comparisons are subtle.

We analyze in detail a number of geometric and spectral theoretic
properties connected with axiomatic reflection positivity, as well
as their probabilistic counterparts; especially the role of the Markov
property. This view also suggests two new theorems, which we prove.
In rough outline: It is possible to express OS-positivity purely in
terms of a triple of projections in a fixed Hilbert space, and a reflection
operator. For such three projections, there is a related property,
often referred to as the Markov property; and it is well known that
the latter implies the former; i.e., when the reflection is given,
then the Markov property implies  O.S.-p., but not conversely. In
this paper we shall prove two theorems which flesh out a much more
precise relationship between the two. We show that for every OS-positive
system $\left(E_{+},\theta\right)$, the operator $E_{+}\theta E_{+}$
has a canonical and universal factorization. 

Our second focus is a structure theory for all admissible reflections.
Our theorems here are motivated by Phillips' theory of dissipative
extensions of unbounded operators. The word ``Markov'' traditionally
makes reference to a random walk process where the Markov property
in turn refers to past and future: Expectation of the future, conditioned
by the past. By contrast, our present initial definitions only make
reference to three prescribed projection operators, and associated
reflections. Initially, there is not even mention of an underlying
probability space. This in fact only comes later.
\end{abstract}

\subjclass[2000]{Primary 47L60, 46N30, 81S25, 81R15, 81T05, 81T75; Secondary 60D05,
60G15, 60J25, 65R10, 58J65. }

\keywords{Osterwalder-Schrader positivity, renormalization, factorization,
Hilbert space, reflection symmetry, quantum field theory, extensions
of dissipative operators, Gaussian processes, random processes, random
fields, Markov property.}

\maketitle
\pagestyle{myheadings}
\markright{}

\tableofcontents{}

\section{Introduction}

The notion \textquotedblleft \emph{reflection-positivity}\textquotedblright{}
came up first in a renormalization question in physics: \textquotedblleft How
to realize observables in relativistic quantum field theory (RQFT)?\textquotedblright{}
This is part of the bigger picture of quantum field theory (QFT);
and it is based on a certain analytic continuation (or reflection)
of the Wightman distributions (from the Wightman axioms). In this
analytic continuation, Osterwalder-Schrader (OS) axioms induce Euclidean
random fields; and Euclidean covariance. (See, e.g., \cite{MR0329492,MR0376002,MR545025,MR887102,MR1895530,JP13,MR3614177,MR3623255}.)
For the unitary representations of the respective symmetry groups,
we therefore change these groups as well: OS-reflection applied to
the Poincar\'e group of relativistic fields yields the Euclidean
group as its reflection. The starting point of the OS-approach to
QFT is a certain positivity condition called \textquotedblleft reflection
positivity.\textquotedblright{} 

Now, when it is carried out in concrete cases, the initial function
spaces change; but, more importantly, the inner product which produces
the respective Hilbert spaces of quantum states changes as well. What
is especially intriguing is that, before reflection we may have a
Hilbert space of functions, but after the time-reflection is turned
on, then, in the new inner product, the corresponding completion,
magically becomes a \emph{Hilbert space of distributions}.

The motivating example here is derived from a certain version of the
Segal\textendash Bargmann transform (see \exaref{rp}). For more detail
on the background and the applications, we refer to two previous joint
papers \cite{MR1641554} and \cite{MR1767902}, as well as \cite{MR0496171,MR0518418,MR659539,Jor86,MR874059,MR1294671,MR1770752,MR2337697,jor2017non}. 

Our present purpose is to analyze in detail a number of geometric
properties connected with the axioms of reflection positivity, as
well as their probabilistic counterparts; especially the role of the
Markov property. This view also suggests two new theorems, to follow
in the rest of the paper.

In rough outline: It is possible to express Osterwalder-Schrader positivity
(O.S.-p.) purely in terms of a triple of projections in a fixed Hilbert
space, and a reflection operator. For such three projections, there
is a related property, often referred to as the Markov property. It
is well known that the latter implies the former; i.e., when the reflection
is given, then the Markov property implies O.S.-p., but not conversely.

In this paper we shall prove two theorems which flesh out a much more
precise relationship between the two. The word \textquotedblleft Markov\textquotedblright{}
traditionally makes reference to a random walk process where the Markov
property in turn refers to past and future: Expectation of the future,
conditioned by the past (details below). By contrast, our present
initial definitions only make reference to three prescribed projection
operators. Initially, there is not even mention of an underlying probability
space. All this comes later. Now if we are in the context of a random
walk process, then such a process may or may not have the Markov property;
which is now instead defined relative to notions of past, present,
and future, and the associated conditional expectations.

While our discussion of the Markov property is couched here in an
axiomatic framework; and is motivated by our particular aims, we stress
that Markov properties, Markov processes, and Markov fields form an
active and very diverse area. While there are links from those directions
to our present results, the connections are not always direct. For
the readers benefit we have included the following citations \cite{MR0100796,MR0343815,MR0343816,MR0436832,MR3529898,MR3624432,MR3614559}
on Markov/random fields.

In order to make our paper accessible to non-specialists, we have
chosen to begin by recalling the fundamentals in the subject. This
choice in turn helps us outline the general framework in the form
we need it for what is to follow.

\section{\label{sec:GR}The geometry of reflections and positivity}

Let $\mathscr{H}$ be a given Hilbert space, and let $U,\theta:\mathscr{H}\rightarrow\mathscr{H}$
be two unitary operators, such that: 
\begin{gather}
\theta^{2}=I_{\mathscr{H}},\;\theta^{*}=\theta,\;\text{and}\label{eq:rp1}\\
\theta U\theta=U^{*}.\label{eq:rp2}
\end{gather}
Note that (\ref{eq:rp1}) states that $\theta$ has spectrum equal
to the two point set $\left\{ \pm1\right\} $. We think of (\ref{eq:rp2})
as a reflection symmetry for the given operator $U$. In this case,
(\ref{eq:rp2}) states that $U$ is unitarily equivalent to its adjoint
$U^{*}$, and so $U$ and its adjoint $U^{\ast}$ have the same spectrum,
but, except for trivial cases, $U$ is not selfadjoint. 

We further assume that there exists a closed subspace $\mathscr{H}_{+}\subset\mathscr{H}$
s.t. 
\begin{align}
U\mathscr{H}_{+} & \subset\mathscr{H}_{+},\;\text{and}\label{eq:rp3}
\end{align}
\begin{equation}
\left\langle h_{+},\theta h_{+}\right\rangle \geq0,\;\forall h_{+}\in\mathscr{H}_{+}.\label{eq:rp4}
\end{equation}
If $E_{+}$ is the projection onto $\mathscr{H}_{+}$, then (\ref{eq:rp4})
is equivalent to 
\begin{equation}
E_{+}\theta E_{+}\geq0\label{eq:rp5}
\end{equation}
with respect to the usual ordering of operators (see \defref{OS}). 
\begin{rem}
In our discussion of (\ref{eq:rp2})-(\ref{eq:rp3}), we state things
in the simple case of just a single unitary operator $U$, but our
conclusions will apply \emph{mutatis mutandis} also to the case when
\emph{U} is instead a strongly continuous unitary representation of
a suitable non-commutative Lie group $G$ (see \secref{MP} and the
papers cited there). In the Lie group case, there is a distinguished
one-parameter subgroup of $G$ corresponding to a choice of time-direction.
Hence the corresponding restriction will be a unitary one-parameter
group, and the forward direction will be the positive half-line $\mathbb{R}_{+}$,
viewed as a sub-semigroup. If $G$ is a Lie group, we shall also be
concerned with sub-semigroups. Condition (\ref{eq:rp3}) will refer
to invariance of $\mathscr{H}_{+}$ under this sub-semigroup. In all
these cases, we shall simply refer to $U$ with regards to (\ref{eq:rp2})-(\ref{eq:rp3}),
even if it is not a single unitary operator. In case of a single unitary
operator $U$, of course by iteration we will automatically have a
representation of the group $\mathbb{Z}$ of integers, and in this
case the sub-semigroup will be understood to be $\mathbb{N}_{0}$.
\end{rem}

\noindent \textbf{Note on terminology.} Given a fixed Hilbert space
$\mathscr{H}$, we shall make use of the following identification
between projections $P$ in $\mathscr{H}$, on the one hand, and the
corresponding closed subspaces $P\mathscr{H}\subset\mathscr{H}$ on
the other. By projection $P$, we mean an operator $P$ in $\mathscr{H}$
satisfying $P^{2}=P=P^{*}$. Conversely, if $\mathscr{L}\subset\mathscr{H}$
is a fixed closed subspace, then by general theory, we know that there
is then a unique projection, say $Q$, such that $Q\mathscr{H}=\mathscr{L}=\left\{ h\in\mathscr{H}\mathrel{;}Qh=h\right\} $. 

In some of our discussions below, there will be more than one Hilbert
space, say $\mathscr{H}$ and $\mathscr{K}$; and they may arise inside
calculations. In those cases, it will be convenient to mark the inner
products and norms with subscripts, $\left\langle \cdot,\cdot\right\rangle _{\mathscr{K}}$,
$\left\Vert \cdot\right\Vert _{\mathscr{K}}$ etc. 

In the discussion of reflection positivity, there will typically be
three projections $E_{0}$, $E_{\pm}$ at the outset, and the corresponding
closed subspaces will be denoted, $\mathscr{H}_{0}:=E_{0}\mathscr{H}$,
$\mathscr{H}_{\pm}:=E_{\pm}\mathscr{H}$. 

We shall denote such a system of projections $\left(E_{\pm},E_{0}\right)$
by $\varepsilon$. If a reflection $\theta$ (see (\ref{eq:rp1}))
maps $\mathscr{H}_{+}$ to $\mathscr{H}_{-}$ (plus minus parity),
we say that $\theta\in\mathscr{R}\left(\varepsilon\right)$. If also
(\ref{eq:rp2}) and (\ref{eq:rp3}) hold, we shall say that $\theta\in\mathscr{R}\left(\varepsilon,U\right)$.
(See \secref{MO} and \defref{E}.)

\subsection{Definitions and Lemmas}

In our study of reflections, and reflection positivity, we shall need
a number of fundamental concepts from the theory of operators in Hilbert
space. While they are in the literature, they are not collected in
a single reference. For readers not in operator theory, we include
below those basic facts in the form they will be needed inside the
paper. A new feature is the notion of signed quadratic forms and subspaces
which are positive with respect to such a given signed quadratic form;
see \lemref{QA}.
\begin{defn}
When $U$, $\theta$, and $E_{+}$ satisfy these conditions, i.e.,
(\ref{eq:rp1})-(\ref{eq:rp5}), we then say that \emph{Osterwalder-Schrader
reflection positivity} holds, abbreviated O.S.P. 
\end{defn}

Below we discuss the standard ordering of projections. What will be
important is that this ordering may be stated in terms of anyone of
six equivalent properties. Each one will be relevant for the applications
to follow; to geometry, to spectral theory, and to analysis of conditional
expectations. For the latter, see e.g., \exaref{MP}, and \subsecref{PS}.
\begin{defn}[Order on projections]
\label{def:OP} ~
\begin{enumerate}
\item A projection in a Hilbert space $\mathscr{H}$ is an operator $P$
satisfying $P=P^{2}=P^{*}$.
\item If $E$ and $P$ are two projections, we say that $E\leq P$ iff (Def.)
one of the following equivalent conditions holds:
\begin{enumerate}
\item $E\mathscr{H}\subseteq P\mathscr{H}$; 
\item $\left\Vert Eh\right\Vert \leq\left\Vert Ph\right\Vert $, $\forall h\in\mathscr{H}$; 
\item $\left\langle h,Eh\right\rangle \leq\left\langle h,Ph\right\rangle $,
$\forall h\in\mathscr{H}$;
\item $PE=E$; 
\item $EP=E$; 
\item for vectors $h\in\mathscr{H}$, the following implication holds: $Eh=h$
$\Longrightarrow$ $Ph=h$.
\end{enumerate}
\end{enumerate}
\end{defn}

\begin{proof}
This is standard operator theory, and can be found in books. See e.g.
\cite{jor2017non}. 
\end{proof}
We shall need this ordering in an analysis of system (\ref{eq:rp1})-(\ref{eq:rp5}).
From the conditions $\theta^{*}=\theta$, $\theta^{2}=I_{\mathscr{H}}$
(reflection) we conclude that $\theta=2P-I_{\mathscr{H}}$ where $P$
is the projection onto $\left\{ h\in\mathscr{H}\mid\theta h=h\right\} $. 

\begin{lem}
\label{lem:EP0}Let $\theta$ be a reflection, and let $P$ be the
projection such that $\theta=2P-I_{\mathscr{H}}$, and let $E_{0}$
be a projection; then TFAE:
\begin{enumerate}
\item $\theta E_{0}=E_{0}$;
\item $E_{0}\leq P$, i.e., $E_{0}h=h$ $\Longrightarrow$ $\theta h=h$.
\end{enumerate}
\end{lem}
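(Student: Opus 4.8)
The plan is to prove the equivalence of the two conditions by unwinding the relation $\theta = 2P - I_{\mathscr{H}}$ and reducing everything to the characterizations of the projection order already catalogued in \defref{OP}. The key observation is that the hypothesis relates two projections, $E_0$ and $P$, through the reflection $\theta$, so the natural strategy is to substitute $\theta = 2P - I_{\mathscr{H}}$ into condition (i) and see that it collapses algebraically into one of the six equivalent forms of $E_0 \le P$.

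**First I would** show (i) $\Rightarrow$ (ii). Assuming $\theta E_0 = E_0$, I substitute $\theta = 2P - I_{\mathscr{H}}$ to get $(2P - I_{\mathscr{H}})E_0 = E_0$, that is $2P E_0 - E_0 = E_0$, hence $2P E_0 = 2 E_0$, so $P E_0 = E_0$. This is precisely condition (d) in \defref{OP}(2), which is one of the equivalent formulations of $E_0 \le P$. By \defref{OP} this already gives $E_0 \le P$, and in particular the pointwise implication $E_0 h = h \Rightarrow P h = h$ (condition (f)), which is the stated second form of (ii).

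**For the converse,** (ii) $\Rightarrow$ (i), I start from $E_0 \le P$ and again use \defref{OP}, which gives $P E_0 = E_0$. Then I compute $\theta E_0 = (2P - I_{\mathscr{H}})E_0 = 2 P E_0 - E_0 = 2 E_0 - E_0 = E_0$, recovering (i). Thus both directions are simple algebraic manipulations once the reflection is written in terms of its fixed-point projection $P$.

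**I expect no serious obstacle here;** the only point deserving care is to make sure the chosen characterization of $E_0 \le P$ from \defref{OP} is applied consistently in both directions. The cleanest route uses $P E_0 = E_0$ throughout, since that single identity serves simultaneously as the image of condition (i) under the substitution and as one of the equivalent conditions for the order relation. One could equally phrase the argument geometrically: $\theta E_0 = E_0$ says that every vector in $E_0 \mathscr{H}$ lies in the $+1$ eigenspace of $\theta$, which is exactly $P\mathscr{H} = \{h : \theta h = h\}$, so $E_0 \mathscr{H} \subseteq P \mathscr{H}$, matching condition (a). Either formulation dispatches the lemma in a few lines.
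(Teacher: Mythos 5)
Your proposal is correct and is essentially identical to the paper's own proof: both substitute $\theta=2P-I_{\mathscr{H}}$ into condition (i), reduce it algebraically to $PE_{0}=E_{0}$, and then invoke the equivalent characterizations of the order $E_{0}\leq P$ from \defref{OP}. The paper merely compresses your two directions into a single chain of equivalences.
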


\begin{proof}
We have the following equivalences:
\[
\theta E_{0}=E_{0}\Longleftrightarrow\left(2P-I_{\mathscr{H}}\right)E_{0}=E_{0}\Longleftrightarrow PE_{0}=E_{0},
\]
and the result now follows from the equivalent statements in \defref{OP}.
\end{proof}
\begin{defn}
\label{def:OS}Fix a Hilbert space $\mathscr{H}$, and let $A$ and
$B$ be two selfadjoint operators in $\mathscr{H}$. We say that $A\leq B$
iff (Def.) $\left\langle h,Ah\right\rangle \leq\left\langle h,Bh\right\rangle $,
for $\forall h\in\mathscr{H}$. 

Note that in case $A$ and $B$ are projections, this order relation
agrees with that in \defref{OP}. Also $A\geq0$, i.e., $\left\langle h,Ah\right\rangle \geq0$,
$\forall h\in\mathscr{H}$, states that the spectrum of $A$ is a
closed subset of $[0,\infty)$. 
\end{defn}

\begin{defn}
\label{def:QA}Let $\mathscr{H}$ be a Hilbert space and let $\mathscr{L}_{\pm}$
be two subspaces. Equip $\mathscr{L}_{+}\times\mathscr{L}_{-}$ with
the following \emph{signed quadratic form}, 
\begin{equation}
\left\langle x,y\right\rangle _{sig}:=\left\langle k_{+},l_{+}\right\rangle _{\mathscr{H}}-\left\langle k_{-},l_{-}\right\rangle _{\mathscr{H}},\label{eq:qa1}
\end{equation}
for all $x=\left(k_{+},k_{-}\right)$, $y=\left(l_{+},l_{-}\right)$
in $\mathscr{L}_{+}\times\mathscr{L}_{-}$. 

A subspace $\mathscr{P}\subset\mathscr{L}_{+}\times\mathscr{L}_{-}$
is said to be \emph{positive} iff for all $x=\left(k_{+},k_{-}\right)\in\mathscr{P}$,
we have 
\begin{equation}
\left\langle x,x\right\rangle _{sig}=\left\Vert k_{+}\right\Vert _{\mathscr{H}}^{2}-\left\Vert k_{-}\right\Vert _{\mathscr{H}}^{2}\geq0.\label{eq:qa2}
\end{equation}
\end{defn}

\begin{lem}
\label{lem:QA}Let $\mathscr{H}$, $\mathscr{L}_{\pm}$, and $\left\langle \cdot,\cdot\right\rangle _{sig}$
be as in \defref{QA}. Then a subspace $\mathscr{P}\subset\mathscr{L}_{+}\times\mathscr{L}_{-}$
is positive if and only if there is a contractive linear operator
$\mathscr{L}_{+}\xrightarrow{\;C\;}\mathscr{L}_{-}$ (w.r.t. the original
norm from $\mathscr{H}$) such that $\mathscr{P}$ is the graph of
$C$, and so $\mathscr{P}=\left\{ \left(k_{+},Ck_{+}\right)\mathrel{;}k_{+}\in\mathscr{L}_{+}\right\} $,
\begin{equation}
\left\langle x,x\right\rangle _{sig}=\left\Vert k_{+}\right\Vert _{\mathscr{H}}^{2}-\left\Vert Ck_{+}\right\Vert _{\mathscr{H}}^{2}.\label{qa3}
\end{equation}
\end{lem}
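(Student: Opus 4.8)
The plan is to prove the biconditional by establishing each implication separately; the reverse implication is immediate, while the forward implication splits naturally into a ``graph'' step and a ``contraction'' step. First I would dispose of the easy direction: suppose $\mathscr{P}$ is the graph of a contraction $C:\mathscr{L}_{+}\rightarrow\mathscr{L}_{-}$, so that every $x\in\mathscr{P}$ has the form $x=\left(k_{+},Ck_{+}\right)$. Then contractivity, $\left\Vert Ck_{+}\right\Vert _{\mathscr{H}}\leq\left\Vert k_{+}\right\Vert _{\mathscr{H}}$, gives $\left\langle x,x\right\rangle _{sig}=\left\Vert k_{+}\right\Vert _{\mathscr{H}}^{2}-\left\Vert Ck_{+}\right\Vert _{\mathscr{H}}^{2}\geq0$, which is exactly (\ref{eq:qa2}); hence $\mathscr{P}$ is positive, and (\ref{qa3}) is nothing but a restatement of this computation.

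For the converse, assume $\mathscr{P}$ is positive. I would first show that $\mathscr{P}$ is a graph over $\mathscr{L}_{+}$, i.e. that the restriction to $\mathscr{P}$ of the first-coordinate projection $\left(k_{+},k_{-}\right)\mapsto k_{+}$ is injective. Indeed, if $x=\left(0,k_{-}\right)\in\mathscr{P}$, then (\ref{eq:qa2}) reads $0-\left\Vert k_{-}\right\Vert _{\mathscr{H}}^{2}\geq0$, forcing $k_{-}=0$. Since $\mathscr{P}$ is a linear subspace, distinct elements cannot share the same first coordinate: if $\left(k_{+},k_{-}\right)$ and $\left(k_{+},k_{-}'\right)$ both lie in $\mathscr{P}$, their difference $\left(0,k_{-}-k_{-}'\right)$ lies in $\mathscr{P}$, whence $k_{-}=k_{-}'$. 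Consequently there is a well-defined \emph{linear} map $C$, with domain the projection $D:=\left\{ k_{+}\mathrel{;}\left(k_{+},k_{-}\right)\in\mathscr{P}\text{ for some }k_{-}\right\} \subseteq\mathscr{L}_{+}$, such that $\mathscr{P}=\left\{ \left(k_{+},Ck_{+}\right)\mathrel{;}k_{+}\in D\right\} $.

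It then remains only to verify that $C$ is contractive, and here positivity does its second job: applying (\ref{eq:qa2}) to a generic $x=\left(k_{+},Ck_{+}\right)\in\mathscr{P}$ yields $\left\Vert k_{+}\right\Vert _{\mathscr{H}}^{2}-\left\Vert Ck_{+}\right\Vert _{\mathscr{H}}^{2}\geq0$, that is $\left\Vert Ck_{+}\right\Vert _{\mathscr{H}}\leq\left\Vert k_{+}\right\Vert _{\mathscr{H}}$ for all $k_{+}\in D$. Thus $C$ is a contraction with respect to the ambient $\mathscr{H}$-norm, and (\ref{qa3}) follows by substitution.

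The only delicate point — which I would flag rather than treat as a genuine obstacle — is the domain of $C$. The construction produces $C$ on $D$, the projection of $\mathscr{P}$ onto $\mathscr{L}_{+}$, so the set-builder description should be read with $k_{+}$ ranging over $D$. When $\mathscr{P}$ is \emph{maximal} among positive subspaces (the case relevant to the factorization of $E_{+}\theta E_{+}$), one has $D=\mathscr{L}_{+}$ and $C$ is defined on all of $\mathscr{L}_{+}$ exactly as written; otherwise one extends the contraction from $\overline{D}$ to $\mathscr{L}_{+}$ (for instance by setting it to $0$ on $D^{\perp}$) or simply records $D$ as the domain. No deeper machinery is required: positivity of the signed form does double duty, first annihilating the vertical vectors to make $\mathscr{P}$ a graph, and then supplying the norm inequality that is precisely contractivity.
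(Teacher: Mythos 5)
Your proof is correct and follows essentially the same route as the paper's: the graph direction is immediate, and for the converse one uses positivity to kill vertical vectors $\left(0,k_{-}\right)$, making $\mathscr{P}$ a graph of a well-defined linear map whose contractivity is then a restatement of (\ref{eq:qa2}). The domain caveat you flag is real — the paper's own proof also writes $C:\mathscr{L}_{+}\rightarrow\mathscr{L}_{-}$ while actually only constructing $C$ on the first-coordinate projection of $\mathscr{P}$ — and the paper deals with this elsewhere by speaking of partially defined contractions and their extensions (see Corollary \ref{cor:EP1} and Theorem \ref{thm:SU}), so your handling is, if anything, slightly more careful than the original.
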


\begin{proof}
It is clear that the graph of a contraction is a positive subspace
in $\mathscr{L}_{+}\times\mathscr{L}_{-}$. 

Conversely, suppose $\mathscr{P}$ is a given positive subspace; then
\begin{equation}
\left\Vert k_{+}\right\Vert _{\mathscr{H}}^{2}-\left\Vert k_{-}\right\Vert _{\mathscr{H}}^{2}\geq0,\;\forall\left(k_{+},k_{-}\right)\in\mathscr{P}.\label{eq:qa4}
\end{equation}
Using (\ref{eq:qa4}), we see that if $\left(k_{+},k_{-}\right)$
and $\left(k_{+},k_{-}'\right)$ are both in $\mathscr{P}$, then
$k_{-}=k_{-}'$; and so $k_{+}\xmapsto{\;C\;}Ck_{+}=k_{-}$ defines
a unique contractive operator $\mathscr{L}_{+}\xrightarrow{\;C\;}\mathscr{L}_{-}$.
As a result, we get that $\mathscr{P}$ is then the graph of this
contraction $C$.
\end{proof}

\subsection{Reflections with given spaces $\mathscr{H}_{+}$ and $\mathscr{H}_{-}$}

The material in the previous subsection will serve to give a characterization
of families of reflections; they will be computed from positive subspaces
relative to certain signed quadratic forms; see especially \corref{EP4}.
Signed quadratic forms in an infinite dimensional setting were first
studied systematically by M. G. Krein et al \cite{MR0149287,MR0200727},
and R. S. Phillips \cite{MR0133686}.
\begin{lem}
\label{lem:EP1}Let $\mathscr{H}$, $\mathscr{H}_{+}$, $\mathscr{H}_{0}$,
and $\theta$ be as in \lemref{EP0}. Let $P$ be the projection onto
$\left\{ h\in\mathscr{H}\mathrel{;}\theta h=h\right\} $. Then 
\begin{equation}
\mathscr{H}=P\mathscr{H}\oplus\left(1-P\right)\mathscr{H}.\label{eq:ep1}
\end{equation}
The decomposition is orthogonal and therefore unique, 
\begin{equation}
h=u+v,\quad Pu=u,\quad Pv=0;\label{eq:ep2}
\end{equation}
i.e., the $\pm1$ eigenspaces for $\theta$. 

Fix a closed subspace $\mathscr{H}_{+}$. The O.S.-positivity $\left\langle h_{+},\theta h_{+}\right\rangle \geq0$,
$\forall h_{+}\in\mathscr{H}_{+}$, holds iff $\mathscr{H}_{+}$ is
contained in the graph of a contractive operator 
\begin{equation}
C:P\mathscr{H}\longrightarrow P^{\perp}\mathscr{H},\label{eq:ep3}
\end{equation}
i.e., $\mathscr{H}_{+}\subseteq\left\{ u+Cu\mathrel{;}u\in P\mathscr{H}\right\} $. 
\end{lem}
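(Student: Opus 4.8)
The plan is to reduce the equivalence to \lemref{QA} via the identity $\theta=2P-I_{\mathscr{H}}$. The orthogonal decomposition (\ref{eq:ep1})--(\ref{eq:ep2}) is just the spectral resolution of the reflection $\theta$: since $\theta^{2}=I_{\mathscr{H}}$ and $\theta^{*}=\theta$, the spectrum of $\theta$ is contained in $\left\{ \pm1\right\} $, and $P\mathscr{H}$, $\left(1-P\right)\mathscr{H}$ are exactly the $+1$ and $-1$ eigenspaces; orthogonality and uniqueness are immediate from $P=P^{2}=P^{*}$. (The subspace $\mathscr{H}_{0}$ plays no role in this particular statement.)

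For the main equivalence, first I would record the quadratic form on the left-hand side. Writing $h=u+v$ with $u=Ph\in P\mathscr{H}$ and $v=\left(1-P\right)h\in P^{\perp}\mathscr{H}$, one has $\theta h=\left(2P-I_{\mathscr{H}}\right)h=u-v$, and since $u\perp v$,
\[
\left\langle h,\theta h\right\rangle =\left\langle u+v,u-v\right\rangle =\left\Vert u\right\Vert ^{2}-\left\Vert v\right\Vert ^{2}=\left\Vert Ph\right\Vert ^{2}-\left\Vert \left(1-P\right)h\right\Vert ^{2}.
\]
Thus the O.S.-positivity $\left\langle h_{+},\theta h_{+}\right\rangle \geq0$ for all $h_{+}\in\mathscr{H}_{+}$ says exactly that $\left\Vert \left(1-P\right)h_{+}\right\Vert \leq\left\Vert Ph_{+}\right\Vert $ on $\mathscr{H}_{+}$. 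I would then transport this to the setting of \defref{QA}: take $\mathscr{L}_{+}:=P\mathscr{H}$, $\mathscr{L}_{-}:=P^{\perp}\mathscr{H}$, and let $\mathscr{P}:=\left\{ \left(Ph_{+},\left(1-P\right)h_{+}\right)\mathrel{;}h_{+}\in\mathscr{H}_{+}\right\} \subset\mathscr{L}_{+}\times\mathscr{L}_{-}$ be the image of $\mathscr{H}_{+}$ under $h_{+}\mapsto\left(Ph_{+},\left(1-P\right)h_{+}\right)$. The displayed identity shows $\left\langle x,x\right\rangle _{sig}=\left\langle h_{+},\theta h_{+}\right\rangle $ for the corresponding $x\in\mathscr{P}$, so O.S.-positivity is precisely positivity of $\mathscr{P}$ in the sense of (\ref{eq:qa2}). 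By \lemref{QA}, $\mathscr{P}$ is then the graph of a contraction $C_{0}$ defined on the first-coordinate projection $\left\{ Ph_{+}\mathrel{;}h_{+}\in\mathscr{H}_{+}\right\} \subseteq P\mathscr{H}$; equivalently $\left(1-P\right)h_{+}=C_{0}Ph_{+}$, i.e. $h_{+}=u+C_{0}u$ with $u=Ph_{+}$.

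Finally I would upgrade ``graph of a contraction on a subspace'' to ``contained in the graph of a contraction on all of $P\mathscr{H}$,'' as the statement demands. Extending $C_{0}$ by continuity to the closure of its domain and by $0$ on the orthogonal complement inside $P\mathscr{H}$ yields a contraction $C:P\mathscr{H}\to P^{\perp}\mathscr{H}$ whose graph contains that of $C_{0}$, whence $\mathscr{H}_{+}\subseteq\left\{ u+Cu\mathrel{;}u\in P\mathscr{H}\right\} $. The converse direction is a direct computation: if $h_{+}=u+Cu$ with $u\in P\mathscr{H}$ and $C$ contractive, then $Pu=u$, $P\left(Cu\right)=0$, so $\theta h_{+}=u-Cu$ and $\left\langle h_{+},\theta h_{+}\right\rangle =\left\Vert u\right\Vert ^{2}-\left\Vert Cu\right\Vert ^{2}\geq0$. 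The only genuine subtlety---and hence the main, if mild, obstacle---is this last extension step, together with the well-definedness of $C_{0}$ (two elements of $\mathscr{H}_{+}$ with the same $P$-component must coincide, which follows from the contraction estimate itself); everything else is bookkeeping around the identification $\mathscr{H}_{+}\cong\mathscr{P}$.
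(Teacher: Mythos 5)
Your proof is correct and follows essentially the same route as the paper's: decompose $h_{+}=u\oplus v$ along the $\pm1$ eigenspaces of $\theta$, observe $\left\langle h_{+},\theta h_{+}\right\rangle =\left\Vert u\right\Vert ^{2}-\left\Vert v\right\Vert ^{2}$, and invoke \lemref{QA} to obtain the contraction $u\mapsto v$, with the converse a direct computation. You are somewhat more explicit than the paper about extending the partially defined contraction $C_{0}$ to all of $P\mathscr{H}$ (the paper leaves this implicit here and only addresses contractive extensions later, in step (iii) of the proof of \thmref{SU}), but this is a refinement of the same argument rather than a different one.
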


\begin{proof}
Decompose vectors $h_{+}\in\mathscr{H}_{+}$ as in (\ref{eq:ep1})-(\ref{eq:ep2}),
and assume O.S.-positivity, then 
\begin{equation}
\left\langle h_{+},\theta h_{+}\right\rangle =\left\Vert u\right\Vert ^{2}-\left\Vert v\right\Vert ^{2}\geq0;\quad h_{+}=u\oplus v\;\text{as in }\left(\ref{eq:ep2}\right).\label{eq:ep4}
\end{equation}
But then the assignment $C:u\longmapsto v$ will define a contractive
operator $C$ as stated in the lemma. Indeed, suppose $h_{+}=u\oplus v$
is as in (\ref{eq:ep4}). Since $\left\Vert u\right\Vert ^{2}-\left\Vert v\right\Vert ^{2}\geq0$;
if $u=0$, it follows that $v=0$; and so $Cu:=v$ is well defined
as a contractive operator (see \lemref{QA}).

When a contraction $C:P\mathscr{H}\rightarrow P^{\perp}\mathscr{H}$
is given, then the corresponding closed subspace $\mathscr{H}_{+}$
is $\mathscr{H}_{+}=\left\{ u+Cu\mathrel{;}u\in P\mathscr{H}\right\} $;
and the reflection $\theta=\theta_{C}$ is determined by $\theta\left(u+Cu\right):=u-Cu$,
and $\left\langle h_{+},\theta h_{+}\right\rangle =\left\Vert u\right\Vert ^{2}-\left\Vert Cu\right\Vert ^{2}\geq0$
follows. (See also \thmref{SU} below.)

Since the converse implication is clear, the lemma is proved. 
\end{proof}
\begin{cor}
\label{cor:EP1}Given $\mathscr{H}$, $\mathscr{H}_{+}$, and $\mathscr{H}_{0}$,
as stated in \lemref{EP1}. Then there is a bijection between the
admissible reflections $\theta$, on the one hand, and partially defined
contractions defined as in (\ref{eq:ep3}), on the other $C:\mathscr{H}_{+}\left(\theta\right)\longrightarrow\mathscr{H}_{-}\left(\theta\right)$
where
\begin{align*}
\mathscr{H}_{+}\left(\theta\right) & =\left\{ h\in\mathscr{H}\mathrel{;}\theta h=h\right\} ,\\
\mathscr{H}_{-}\left(\theta\right) & =\left\{ k\in\mathscr{H}\mathrel{;}\theta k=-k\right\} .
\end{align*}
\end{cor}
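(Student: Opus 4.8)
The plan is to read the bijection directly off \lemref{EP1}, which already furnishes both constructions; the only fresh content is that the two maps invert one another, so the real work lies in matching up domains. Fix the data $\mathscr{H}$, $\mathscr{H}_+$, $\mathscr{H}_0$ throughout, and write $P_{\mathscr{K}}$ for the orthogonal projection onto a closed subspace $\mathscr{K}$. The first observation is that a reflection is completely determined by its $(+1)$-eigenspace: from $\theta=\theta^{*}$ and $\theta^{2}=I$ we have $\theta=2P_{\mathscr{K}}-I$ with $\mathscr{K}:=\mathscr{H}_+(\theta)=\{h:\theta h=h\}$, and then $\mathscr{H}_-(\theta)=\{k:\theta k=-k\}=\mathscr{K}^{\perp}$ is forced. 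Hence choosing an admissible reflection is the same as choosing a closed subspace $\mathscr{K}$ for which $(\mathscr{H}_+,\theta)$ satisfies O.S.-positivity (and, when admissibility is taken to include $\theta E_0=E_0$, for which $\mathscr{H}_0\subseteq\mathscr{K}$, by \lemref{EP0}).

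For the forward map $\theta\mapsto C_\theta$, write each $h_+\in\mathscr{H}_+$ as $h_+=u+v$ with $u\in\mathscr{K}$ and $v\in\mathscr{K}^{\perp}$. O.S.-positivity is precisely the statement that $\mathscr{H}_+$ is a positive subspace of $\mathscr{K}\times\mathscr{K}^{\perp}$ for the signed form of \defref{QA}; hence by \lemref{QA} (equivalently, by the computation in \lemref{EP1}) the rule $u\mapsto v$ is a well-defined contraction $C_\theta$ with domain the subspace $P_{\mathscr{K}}\mathscr{H}_+\subseteq\mathscr{K}$ and with graph exactly $\mathscr{H}_+$; this is the partially defined contraction $C_\theta\colon\mathscr{H}_+(\theta)\to\mathscr{H}_-(\theta)$ named in the statement. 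For the backward map $C\mapsto\theta_C$, I regard a partially defined contraction as carrying, in addition to its action, its domain and codomain \emph{spaces}, prescribed here to be a complementary orthogonal pair $\mathscr{K}\oplus\mathscr{K}^{\perp}=\mathscr{H}$; set $\theta_C:=2P_{\mathscr{K}}-I$. The converse half of \lemref{EP1} guarantees that $\theta_C$ is a reflection for which the graph of $C$ is O.S.-positive, and the standing requirement that this graph equal the fixed $\mathscr{H}_+$ is what ties $C$ to the given data.

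Mutual inversion is then a short check, and pinpointing where care is needed is the real point. The composite $\theta\mapsto C_\theta\mapsto\theta_{C_\theta}$ returns $\theta$ at once, since the domain space of $C_\theta$ is $\mathscr{K}=\mathscr{H}_+(\theta)$, so $\theta_{C_\theta}=2P_{\mathscr{K}}-I=\theta$. For the other composite $C\mapsto\theta_C\mapsto C_{\theta_C}$, both $C$ and $C_{\theta_C}$ are contractions with the same domain space $\mathscr{K}$ and the common graph $\mathscr{H}_+$, so the uniqueness clause of \lemref{QA} forces $C_{\theta_C}=C$. The one genuine subtlety — and the step I expect to demand the most care — is the apparent circularity in the indexing: the codomain $\mathscr{H}_-(\theta)$ of $C$ is an eigenspace of the very reflection one is reconstructing. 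Insisting that a partially defined contraction carry its domain and codomain spaces, and not merely its action, dissolves this circularity, for then the codomain is literally $\mathscr{K}^{\perp}$ and the orthogonality $\mathscr{K}^{\perp}=\mathscr{H}_-(\theta_C)$ is built into $\theta_C=2P_{\mathscr{K}}-I$. Finally, if admissibility includes $\theta E_0=E_0$, then \lemref{EP0} identifies this with $\mathscr{H}_0\subseteq\mathscr{K}$, i.e. with requiring $\mathscr{H}_0$ to lie in the domain space of $C$, so the bijection restricts to the admissible subclass with no extra argument.
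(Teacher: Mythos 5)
Your proof is correct and follows the same route the paper intends: the corollary carries no separate proof and is meant to be read off \lemref{EP1} (together with \lemref{QA} and \lemref{EP0}), exactly as you do. Your insistence that the partially defined contraction carry its ambient domain and codomain spaces $\mathscr{K}$ and $\mathscr{K}^{\perp}$ as part of its data is precisely what makes the correspondence a genuine bijection rather than a many-to-one assignment (compare the non-unique extensions exhibited in \remref{3d}), so that elaboration is a welcome sharpening of the step the paper leaves implicit.
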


\begin{cor}
\label{cor:EP2}Let $\theta$ be a reflection, and let $P=proj\left\{ x\in\mathscr{H}\mathrel{;}\theta x=x\right\} $
so that $\theta=2P-I_{\mathscr{H}}$. Let $C$ be the corresponding
contraction. 

Given a projection $E_{0}$ such that $E_{0}\leq P$, then TFAE: 
\begin{enumerate}
\item \label{enu:EP2a}$E_{0}\leq E_{+}$; and
\item \label{enu:EP2b}$E_{0}\leq\ker\left(C\right)$. 
\end{enumerate}
\end{cor}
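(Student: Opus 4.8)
The plan is to convert both operator inequalities into subspace containments via \defref{OP}, and then read everything off the graph description of $\mathscr{H}_{+}$ supplied by \lemref{EP1}. Write $\mathscr{H}_{0}=E_{0}\mathscr{H}$, and recall from \lemref{EP1} (using the bijective correspondence of \corref{EP1}, so that $\mathscr{H}_{+}$ is exactly the graph of the associated contraction) that $\mathscr{H}_{+}=\{u+Cu\mathrel{;}u\in P\mathscr{H}\}$, where $C:P\mathscr{H}\to P^{\perp}\mathscr{H}$. The standing hypothesis $E_{0}\leq P$ says, by \defref{OP}, that $\mathscr{H}_{0}\subseteq P\mathscr{H}$; equivalently, by \lemref{EP0}, every $w\in\mathscr{H}_{0}$ is fixed by $\theta$ and has vanishing $P^{\perp}$-component. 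This last fact is the hinge of the whole argument.

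For the implication (\ref{enu:EP2a})$\Rightarrow$(\ref{enu:EP2b}) I would take an arbitrary $w\in\mathscr{H}_{0}$. Condition (\ref{enu:EP2a}), read as $\mathscr{H}_{0}\subseteq\mathscr{H}_{+}$, places $w$ in the graph, so $w=u+Cu$ with $u\in P\mathscr{H}$ and $Cu\in P^{\perp}\mathscr{H}$ its orthogonal parts. Since $w\in P\mathscr{H}$, its $P^{\perp}$-component is zero, which forces $Cu=0$ and $u=w$; hence $Cw=0$, i.e. $w\in\ker(C)$. As $w$ was arbitrary, $\mathscr{H}_{0}\subseteq\ker(C)$, which is (\ref{enu:EP2b}).

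For the converse (\ref{enu:EP2b})$\Rightarrow$(\ref{enu:EP2a}), I would again pick $w\in\mathscr{H}_{0}\subseteq\ker(C)$. Then $Cw=0$ and $w\in P\mathscr{H}$, so $w=w+Cw$ exhibits $w$ as a point of the graph $\{u+Cu\}$, that is $w\in\mathscr{H}_{+}$. Thus $\mathscr{H}_{0}\subseteq\mathscr{H}_{+}$, which is (\ref{enu:EP2a}). A final appeal to \defref{OP} to pass between subspace inclusion and the order on projections closes both directions.

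There is no genuine obstacle here; the one point requiring care is the essential use of the hypothesis $E_{0}\leq P$. Without it a vector of $\mathscr{H}_{0}\cap\mathscr{H}_{+}$ could have a nonzero $P^{\perp}$-component (a nonzero $Cu$) and so fail to lie in $\ker(C)$; it is precisely the containment $\mathscr{H}_{0}\subseteq P\mathscr{H}$ that pins the $P^{\perp}$-part to zero and thereby makes membership in the graph of $C$ equivalent to membership in $\ker(C)$. One should also record that $\ker(C)$ is closed, since $C$ is bounded, so that the comparison $E_{0}\leq\ker(C)$ in \defref{OP} is indeed a comparison of genuine projections.
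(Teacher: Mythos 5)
Your proof is correct and follows essentially the same route as the paper's: both arguments use the graph description $\mathscr{H}_{+}=\left\{ u+Cu\mathrel{;}u\in P\mathscr{H}\right\} $ together with the hypothesis $E_{0}\leq P$ to split $w=u+Cu$ orthogonally along $P\mathscr{H}\oplus P^{\perp}\mathscr{H}$ and force $u=w$, $Cw=0$. Your explicit treatment of the converse direction and the remark that the argument needs $\mathscr{H}_{+}$ to be exactly the graph (not merely contained in it, cf.\ \remref{3d}) are welcome points of care, but the substance is the same.
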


\begin{proof}
We shall identify closed subspaces in $\mathscr{H}$ with the corresponding
projections; see \defref{OP}. By \corref{EP1}, $\theta=\theta_{C}$
has the form 
\[
\theta\left(u+Cu\right)=u-Cu,\;u\in P\mathscr{H},
\]
where $C:P\mathscr{H}\rightarrow P^{\perp}\mathscr{H}$, is a uniquely
determined contraction. 

Let $x_{0}\in E_{0}$; then $x_{0}\in\mathscr{H}_{+}$ iff $\exists\left(!\right)u\in P\mathscr{H}$
such that $x_{0}=u+Cu$. So 
\[
0=\underset{\in P\mathscr{H}}{\underbrace{\left(u-x_{0}\right)}}+\underset{\in P^{\perp}\mathscr{H}}{\underbrace{Cu},}
\]
and both terms are zero; i.e., $u=x_{0}$, and $Cu=Cx_{0}=0$. The
equivalence (\ref{enu:EP2a}) $\Longleftrightarrow$ (\ref{enu:EP2b})
now follows.
\end{proof}
\begin{cor}
\label{cor:EP4}Let $\theta$ be a reflection in a Hilbert space $\mathscr{H}$,
and let $P:=proj\left\{ x\in\mathscr{H}\mathrel{;}\theta x=x\right\} $.
Let $C:P\mathscr{H}\longrightarrow P^{\perp}\mathscr{H}$ be the corresponding
contraction. Assume the subspaces $\mathscr{H}_{\pm}$ satisfy $\mathscr{H}_{+}=\left\{ x+Cx\mathrel{;}x\in P\mathscr{H}\right\} $,
and $\mathscr{H}_{-}=\theta\left(\mathscr{H}_{+}\right)=\left\{ x-Cx\mathrel{;}x\in P\mathscr{H}\right\} $.
We now have:
\begin{equation}
\mathscr{H}_{+}\cap\mathscr{H}_{-}=\ker\left(C\right)=\mathscr{H}_{+}\cap P\label{eq:ep5}
\end{equation}
where we identify subspaces with the corresponding projections.
\end{cor}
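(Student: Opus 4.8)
The plan is to work entirely in the orthogonal decomposition $\mathscr{H}=P\mathscr{H}\oplus P^{\perp}\mathscr{H}$ from \lemref{EP1}, using the explicit parametrizations $\mathscr{H}_{+}=\left\{ x+Cx\mathrel{;}x\in P\mathscr{H}\right\} $ and $\mathscr{H}_{-}=\left\{ x-Cx\mathrel{;}x\in P\mathscr{H}\right\} $. The decisive structural facts are that every $h\in\mathscr{H}$ splits \emph{uniquely} as $h=u+v$ with $u\in P\mathscr{H}$, $v\in P^{\perp}\mathscr{H}$ (see (\ref{eq:ep1})--(\ref{eq:ep2})), and that $C$ carries $P\mathscr{H}$ into $P^{\perp}\mathscr{H}$. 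Consequently each of the three subspaces appearing in (\ref{eq:ep5}) can be pinned down by conditions on the separate $P$- and $P^{\perp}$-components, and the asserted equalities will drop out by comparing those conditions. Throughout I would keep to the convention, announced in the statement, of identifying a closed subspace with its projection, so that ``$\mathscr{H}_{+}\cap P$'' is read as $\mathscr{H}_{+}\cap P\mathscr{H}$, and $\ker\left(C\right)$ is regarded as a subspace of $\mathscr{H}$ via $\ker\left(C\right)\subseteq P\mathscr{H}\subseteq\mathscr{H}$.

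First I would compute $\mathscr{H}_{+}\cap\mathscr{H}_{-}$. A generic vector of $\mathscr{H}_{+}$ is $x+Cx$ and one of $\mathscr{H}_{-}$ is $y-Cy$, with $x,y\in P\mathscr{H}$. Equating $x+Cx=y-Cy$ and matching $P\mathscr{H}$- and $P^{\perp}\mathscr{H}$-parts, which is legitimate by the uniqueness in (\ref{eq:ep2}), forces $x=y$ together with $Cx=-Cy=-Cx$, hence $2Cx=0$, i.e. $Cx=0$. Thus a common vector must equal $x$ with $x\in\ker\left(C\right)$, and conversely every $x\in\ker\left(C\right)$ lies in both spaces since $x=x+Cx=x-Cx$. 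This gives $\mathscr{H}_{+}\cap\mathscr{H}_{-}=\ker\left(C\right)$.

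Next I would treat $\mathscr{H}_{+}\cap P\mathscr{H}$. A vector $x+Cx\in\mathscr{H}_{+}$ belongs to $P\mathscr{H}$ exactly when its $P^{\perp}$-component vanishes, that is $Cx=0$, in which case the vector is $x\in\ker\left(C\right)$; conversely any $x\in\ker\left(C\right)$ gives $x+Cx=x\in\mathscr{H}_{+}\cap P\mathscr{H}$. Hence $\mathscr{H}_{+}\cap P\mathscr{H}=\ker\left(C\right)$, which closes the chain of equalities in (\ref{eq:ep5}).

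I do not expect a genuine obstacle: the entire argument is bookkeeping in the orthogonal splitting, and the only inputs are the uniqueness of that splitting and the containment $\mathrm{ran}\left(C\right)\subseteq P^{\perp}\mathscr{H}$, which is precisely what makes the cross terms separate cleanly and collapse to the kernel condition. The single point deserving explicit care is the subspace/projection identification, so that the three objects in (\ref{eq:ep5}) are compared as subspaces of the one ambient space $\mathscr{H}$.
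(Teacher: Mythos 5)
Your argument is correct and is essentially the paper's own proof: both decompose a common vector along $\mathscr{H}=P\mathscr{H}\oplus P^{\perp}\mathscr{H}$, equate $x+Cx=y-Cy$, and use uniqueness of the orthogonal splitting to force $x=y$ and $Cx=0$. If anything, you are slightly more self-contained, since you also verify $\mathscr{H}_{+}\cap P\mathscr{H}=\ker\left(C\right)$ directly rather than citing Corollary \ref{cor:EP2} for the easy containment.
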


\begin{proof}
The implication ``$\supset$'' is immediate from \corref{EP2}.
Now, let $h\in\mathscr{H}_{+}\cap\mathscr{H}_{-}$. Hence, there are
vectors $x,y\in P\mathscr{H}$ such that $h=x+Cx=y-Cy$. Hence, 
\begin{equation}
\underset{\in P\mathscr{H}}{\underbrace{y-x}}=\underset{\in P^{\perp}\mathscr{H}}{\underbrace{Cx+Cy}};\label{eq:ep6}
\end{equation}
so both sides of (\ref{eq:ep6}) must be zero. We get $y=x$, and
$Cx=0$; so $h=x\in\ker\left(C\right)$ which is the desired conclusion
(\ref{eq:ep5}).
\end{proof}
\begin{rem}
\label{rem:3d}In \corref{EP4}, we assumed $\mathscr{H}_{+}=\left\{ x+Cx\mathrel{;}x\in P\mathscr{H}\right\} $;
but this is not necessarily satisfied in the general formulation (see
(\ref{eq:rp3})-(\ref{eq:rp4})). 

For example, let $\mathscr{H}=\mathbb{C}^{3}$ with the standard orthonormal
basis $\left\{ e_{j}\right\} _{j=1}^{3}$. Set 
\[
\theta:=\begin{pmatrix}1 & 0 & 0\\
0 & 1 & 0\\
0 & 0 & -1
\end{pmatrix},\;\text{and}\quad\mathscr{H}_{+}=span\left\{ e_{1}+\frac{1}{2}e_{3}\right\} .
\]
So $\mathscr{H}_{+}$ is 1-dimensional. The contraction $C$ is given
by 
\begin{align*}
C:span\left\{ e_{1}\right\}  & \longrightarrow span\left\{ e_{3}\right\} \\
Ce_{1} & =\frac{1}{2}e_{3};
\end{align*}
yields $\mathscr{H}_{+}=span\left\{ e_{1}+Ce_{1}\right\} $. Then
we have $\theta=2P-I$, where 
\[
P=\begin{pmatrix}1 & 0 & 0\\
0 & 1 & 0\\
0 & 0 & 0
\end{pmatrix},\;\text{and}\quad E_{+}\theta E_{+}\geq0,\;\text{where}
\]
$E_{+}$ denotes the projection onto $\mathscr{H}_{+}$. It is clear
that 
\[
\mathscr{H}_{+}\subsetneq\left\{ x+Cx\mathrel{;}x\in P\mathscr{H}\right\} ,\text{ proper containment,}
\]
since $\dim P=2$. 

Now, extend the contraction to $C:P\mathscr{H}\longrightarrow P^{\perp}\mathscr{H}$
via 
\[
Ce_{2}=0;
\]
then $\ker\left(C\right)=span\left\{ e_{2}\right\} $. Thus, we get
$\mathscr{H}_{\pm}=span\left\{ e_{1}\pm\frac{1}{2}e_{3}\right\} $,
but 
\[
0=\mathscr{H}_{+}\cap\mathscr{H}_{-}=\mathscr{H}_{+}\cap P\neq\ker\left(C\right)=span\left\{ e_{2}\right\} .
\]
\end{rem}

\begin{rem}
In the general configuration the two projections $E_{\pm}$ from \corref{EP4}
can be more complicated. If it is only assumed that the system $\left(E_{\pm},\theta\right)$
satisfies the O.S.-condition in (\ref{eq:rp5}), $\mathscr{H}_{\pm}:=E_{\pm}\mathscr{H}$,
then the best that can be said about $\mathscr{H}_{+}\cap\mathscr{H}_{-}$
is the following: 

Let $Q:=E_{+}\wedge E_{-}=$ the projection onto $\mathscr{H}_{+}\cap\mathscr{H}_{-}$;
then the following limit holds (in the strong operator topology):
\begin{equation}
Q=\lim_{n\rightarrow\infty}\left(E_{+}E_{-}\right)^{n}.\label{eq:ru1}
\end{equation}
This conclusion follows from a general fact in operator theory, see
e.g. \cite[sect.12]{MR0051437}, and also \cite{jor2017non}. Moreover,
the limit in (\ref{eq:ru1}) is known to be monotone (decreasing.)
\end{rem}

\subsection{Maximal Reflections}

As we saw that the specification of reflections may be stated in terms
of certain positive subspaces (\lemref{EP1}), it is natural to ask
for the corresponding notion of maximal subspaces. We address this
in the theorem to follow. The significance of maximality will be further
addressed in the subsequent section.
\begin{defn}
Let $\mathscr{H}$ be a Hilbert space and $\theta$ a reflection on
$\mathscr{H}$, see (\ref{eq:rp1}). Let $P=proj\left\{ x\in\mathscr{H}\mathrel{;}\theta x=x\right\} $,
so that $\theta=2P-I_{\mathscr{H}}$. Set 
\begin{equation}
Sub_{OS}\left(\theta\right)=\left\{ E_{+}\mathrel{;}E_{+}\text{ is a projection in \ensuremath{\mathscr{H}} s.t. \ensuremath{E_{+}\theta E_{+}\geq0}}\right\} .\label{eq:su1}
\end{equation}
\end{defn}

As usual properties for projections have equivalent formulation for
closed subspaces: In this case, we may identify elements in $Sub_{OS}\left(\theta\right)$
with closed subspaces $\mathscr{H}_{+}$ such that 
\begin{equation}
\left\langle h_{+},\theta h_{+}\right\rangle \geq0,\;\text{for \ensuremath{\forall h_{+}\in\mathscr{H}_{+}}.}\label{eq:su2}
\end{equation}
Set $\mathscr{H}_{+}:=E_{+}\mathscr{H}$. 

Now, combining the results above, we arrive at the following conclusions: 
\begin{thm}
\label{thm:SU}Let $\mathscr{H}$, $\theta$, and $P$ be as stated,
and consider the corresponding $Sub_{OS}\left(\theta\right)$ as in
(\ref{eq:su1}), or equivalently (\ref{eq:su2}). 

Then $Sub_{OS}\left(\theta\right)$ is an ordered lattice of projections,
and it has the following family of maximal elements: Let $C:P\mathscr{H}\longrightarrow P^{\perp}\mathscr{H}$
be a contractive operator, and set 
\begin{equation}
\mathscr{H}_{+}\left(P,C\right):=\left\{ x+Cx\mathrel{;}x\in P\mathscr{H}\right\} .\label{eq:su3}
\end{equation}

Then $\mathscr{H}_{+}\left(P,C\right)$ is maximal in $Sub_{OS}\left(\theta\right)$,
and every maximal element in $Sub_{OS}\left(\theta\right)$ has this
form for some contraction $C:P\mathscr{H}\rightarrow P^{\perp}\mathscr{H}$.
\end{thm}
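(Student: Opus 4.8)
The plan is to dispatch the order-theoretic assertion quickly and then concentrate on the description of the maximal elements, which carries the real content. For the first assertion I would restrict the order of \defref{OP} to $Sub_{OS}\left(\theta\right)$; the zero projection is the least element. The essential closure property is under meets: if $\left\langle h,\theta h\right\rangle \geq0$ holds on each of two subspaces $\mathscr{H}_{1},\mathscr{H}_{2}$, then it holds a fortiori on $\mathscr{H}_{1}\cap\mathscr{H}_{2}$, so the meet $E_{1}\wedge E_{2}$ formed in the ambient lattice of all projections already lies in $Sub_{OS}\left(\theta\right)$ and is the greatest lower bound there; the same applies to arbitrary intersections. A least upper bound of a bounded family, whenever a positive upper bound exists, is then recovered as the intersection of all positive upper bounds, which is positive by the same closure property; this is the lattice structure being asserted.

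Turning to the maximal elements, the first step is to check that $\mathscr{H}_{+}\left(P,C\right)\in Sub_{OS}\left(\theta\right)$ for every contraction $C$. Writing a generic vector as $h_{+}=x+Cx$ with $x\in P\mathscr{H}$ and $Cx\in P^{\perp}\mathscr{H}$, and using $\theta=2P-I_{\mathscr{H}}$ together with $Px=x$ and $PCx=0$, I get $\theta h_{+}=x-Cx$, so that $\left\langle h_{+},\theta h_{+}\right\rangle =\left\Vert x\right\Vert ^{2}-\left\Vert Cx\right\Vert ^{2}\geq0$ since $C$ is contractive; the cross terms drop out by orthogonality of $P\mathscr{H}$ and $P^{\perp}\mathscr{H}$. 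This is precisely the content of \lemref{EP1} and shows that every such graph is a positive subspace.

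Next I would establish maximality and its converse together. Suppose $\mathscr{H}_{+}\left(P,C\right)\subseteq\mathscr{K}$ with $\mathscr{K}\in Sub_{OS}\left(\theta\right)$. By \lemref{EP1} applied to $\mathscr{K}$ there is a contraction $C':P\mathscr{H}\rightarrow P^{\perp}\mathscr{H}$ with $\mathscr{K}\subseteq\left\{ u+C'u\mathrel{;}u\in P\mathscr{H}\right\}$. Because $\mathscr{H}_{+}\left(P,C\right)$ is the full graph over $P\mathscr{H}$, each $x\in P\mathscr{H}$ yields $x+Cx\in\mathscr{K}\subseteq\left\{ u+C'u\mathrel{;}u\in P\mathscr{H}\right\}$, and equating $P^{\perp}$-components forces $C'x=Cx$; hence $C'=C$ and $\mathscr{K}\subseteq\left\{ u+Cu\mathrel{;}u\in P\mathscr{H}\right\} =\mathscr{H}_{+}\left(P,C\right)$, giving $\mathscr{K}=\mathscr{H}_{+}\left(P,C\right)$. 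For the converse, if $\mathscr{M}$ is any maximal element, \lemref{EP1} supplies a contraction $C$ on $P\mathscr{H}$ with $\mathscr{M}\subseteq\mathscr{H}_{+}\left(P,C\right)$; since the latter is itself in $Sub_{OS}\left(\theta\right)$ by the previous step, maximality of $\mathscr{M}$ forces $\mathscr{M}=\mathscr{H}_{+}\left(P,C\right)$, which is the required form.

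The step I expect to demand the most care is the passage from an arbitrary positive subspace to a contraction \emph{defined on all of} $P\mathscr{H}$. In the forward direction of \lemref{EP1} the assignment $u\mapsto Cu$ is produced only on $P\left(\mathscr{H}_{+}\right)$, the image of the subspace under $P$, which may be a proper, non-closed subspace of $P\mathscr{H}$. I would first extend it to the closure by continuity, using that it is contractive hence uniformly continuous, and then by $0$ on the orthogonal complement inside $P\mathscr{H}$; a one-line estimate shows the extension is again contractive, and since enlarging the domain only enlarges the graph, the inclusion $\mathscr{M}\subseteq\mathscr{H}_{+}\left(P,C\right)$ survives the extension. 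With $C$ now bounded and everywhere defined on $P\mathscr{H}$, its graph is closed, so $\mathscr{H}_{+}\left(P,C\right)$ is a genuine element of $Sub_{OS}\left(\theta\right)$ and the comparisons above are legitimate. The only other delicate point, the identification $C'=C$ in the maximality argument, rests on the fact that $\mathscr{H}_{+}\left(P,C\right)$ projects onto all of $P\mathscr{H}$, which is what pins $C'$ down uniquely.
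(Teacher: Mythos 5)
Your proposal is correct and follows essentially the same route as the paper's proof: reduce via \lemref{EP1} to a partially defined contraction on $P\mathscr{H}_{+}$, extend it to a contraction on all of $P\mathscr{H}$, and identify the maximal elements of $Sub_{OS}\left(\theta\right)$ with the full graphs $\mathscr{H}_{+}\left(P,C\right)$. The only differences are that you make explicit two steps the paper delegates elsewhere --- the contractive extension (which the paper obtains by citing Phillips' theory, and which you realize concretely by continuity on the closure of $P\left(\mathscr{H}_{+}\right)$ and by zero on its complement) and the maximality argument pinning down $C'=C$ from the fact that the graph sits over all of $P\mathscr{H}$ --- both of which are sound.
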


\begin{proof}
(i) If $E_{+}\mathscr{H}=\mathscr{H}_{+}$, and $E_{+}'\mathscr{H}=\mathscr{H}_{+}'$
are in $Sub_{OS}\left(\theta\right)$, it is clear that then so is
$\left(E_{+}\wedge E_{+}'\right)\left(\mathscr{H}\right)=\mathscr{H}_{+}\cap\mathscr{H}_{+}'$. 

(ii) Fix $E_{+}\mathscr{H}=\mathscr{H}_{+}$, $E_{+}\in Sub_{OS}\left(\theta\right)$.
We have 
\begin{equation}
\mathscr{H}_{+}=P\mathscr{H}_{+}+P^{\perp}\mathscr{H}_{+},\label{eq:su4}
\end{equation}
and by the argument in the proof of \lemref{EP1}, we conclude that
there is a contractive operator $C:P\mathscr{H}_{+}\rightarrow P^{\perp}\mathscr{H}_{+}$,
and we get the representation 
\begin{equation}
\mathscr{H}_{+}=\left\{ x+Cx\mathrel{;}x\in P\mathscr{H}_{+}\right\} .\label{eq:su5}
\end{equation}

Let $\mathscr{H}_{+}^{\left(i\right)}$, $i=1,2$, be in $Sub_{OS}\left(\theta\right)$;
and suppose $\mathscr{H}_{+}^{\left(1\right)}\subseteq\mathscr{H}_{+}^{\left(2\right)}$.
Let $C_{i}$, $i=1,2$, be the corresponding contractions, i.e., $C_{i}:P\mathscr{H}_{+}^{\left(i\right)}\rightarrow P^{\perp}\mathscr{H}_{+}^{\left(i\right)}$,
then it follows from (\ref{eq:su5}) that the contraction $C_{2}$
is an extension of $C_{1}$. 

(iii) By general theory, see e.g., \cite{MR0133686}, any contraction
$C$ as in (\ref{eq:su5}) has contractive extensions $\widetilde{C}:P\mathscr{H}\rightarrow P^{\perp}\mathscr{H}$.
Setting $\mathscr{H}_{+}(P,\widetilde{C})$ as in (\ref{eq:su3}),
we conclude that $\mathscr{H}_{+}\subseteq\mathscr{H}_{+}(P,\widetilde{C})$.
Also see \cite{jor2017non}.

(iv) Converse, fix a contraction $D:P\mathscr{H}\rightarrow P^{\perp}\mathscr{H}$,
and consider $\mathscr{H}_{+}(P,D)$, as in (\ref{eq:su3}), the argument
from the proofs of \lemref{EP1} and \corref{EP1}, shows that $\mathscr{H}_{+}(P,D)$
is maximal in $Sub_{OS}\left(\theta\right)$; and, conversely, every
maximal element in $Sub_{OS}\left(\theta\right)$ has this form for
some contraction $D:P\mathscr{H}\rightarrow P^{\perp}\mathscr{H}$. 
\end{proof}
\begin{example}[1-dimensional case of $\mathscr{H}_{+}$; see (\ref{eq:su5})]
Fix $\theta=2P-I_{\mathscr{H}}$; and consider $E_{+}\in Sub_{OS}\left(\theta\right)$
with $\mathscr{H}_{+}:=E_{+}\mathscr{H}$ as spanned by $h_{+}=Pf+c\,P^{\perp}f$,
$f\in\mathscr{H}$, $c\in\mathbb{C}$, $\left\Vert f\right\Vert =1$;
and $\theta\left(h_{+}\right)=Pf-c\,P^{\perp}f$. Then 
\[
\underset{=:\alpha}{\underbrace{\left\Vert Pf\right\Vert ^{2}}}+\underset{1-\alpha}{\underbrace{\left\Vert P^{\perp}f\right\Vert ^{2}}}=1
\]
so that $\left\langle h_{+},\theta h_{+}\right\rangle =\alpha-\left|c\right|^{2}\left(1-\alpha\right)\geq0$
$\Longleftrightarrow$ $\left|c\right|^{2}\leq{\displaystyle \frac{\alpha}{1-\alpha}}.$
\end{example}

\begin{rem}
Our analysis of reflections $\theta$ and associated subspaces $\mathscr{H}_{+}$
is based on our \lemref{EP1} and \corref{EP1} where we show that
the admissible pairs $\left(\theta,\mathscr{H}_{+}\right)$ are determined
by a certain family of partially defined contractive operators. This
idea in turn is motivated by a parallel analysis of dissipative operators
with dense domain, as pioneered by R.S. Phillips, see e.g., \cite{MR0133686}.
In general, given $\theta$, there are many subspaces $\mathscr{H}_{+}$
which satisfy the O.S. positivity (\ref{eq:rp5}). In Corollaries
\ref{cor:EP2}-\ref{cor:EP4} we concentrate on a particular case
for $\mathscr{H}_{+}=\mathscr{H}_{+}\left(P\right)$ which is maximal;
see the statement of \corref{EP4}.

Our present discussion is parallel to the theory of Phillips \cite{MR0133686}
regarding dissipative extensions. Phillips' theory is also formulated
in the language of contractions. Since Phillips' theory deals with
unbounded operators with dense domain, the interesting statements
are for infinite-dimensional Hilbert spaces. Our results in \secref{MO}
below also deal with extensions, and there are many parallels between
the arguments we use there, and those of Phillips in the case of Cayley
transforms of dissipative operators.
\end{rem}

\section{New Hilbert space from reflection positivity (renormalization)}

Given a Hilbert space $\mathscr{H}$ and three closed subspaces (equivalently,
systems of projections, $E$). In this very general setting, it is
possible to give answers to the following questions: What are the
conditions on a given system $E$ which admits reflections $\theta$?
Suppose reflections exist, then fix $E$: What then is the variety
of all compatible reflections $\theta$? Characterize the maximal
reflections.

Given $E$, and an admissible reflection $(E,\theta)$, what are the
unitary operators $U$ in $\mathscr{\ensuremath{H}}$ which define
reflection symmetries with respect to $(E,\theta)$? Given $(E,\theta)$,
what is the relationship between operator theory in $\mathscr{H}_{+}$,
and that of the induced Hilbert space $\mathscr{K}$? Explore dichotomies
at the two levels.

Let $\mathscr{H}$, $\mathscr{H}_{+}$, $\theta$, and $U$ be as
above. In particular, we assume that $E_{+}\theta E_{+}\geq0$. Set
\begin{align}
\mathscr{N} & =\ker\left(E_{+}\theta E_{+}\right)=\left\{ h_{+}\in\mathscr{H}_{+}\mathrel{;}\left\langle h_{+},\theta h_{+}\right\rangle =0\right\} ,\;\text{and}\label{eq:rp6}\\
\mathscr{K} & =\left(\mathscr{H}_{+}/\mathscr{N}\right)^{\sim},\label{eq:rp6b}
\end{align}
where ``\textasciitilde{}'' in (\ref{eq:rp6b}) means Hilbert completion
with respect to the sesquilinear form: $\mathscr{H}_{+}\times\mathscr{H}_{+}\rightarrow\mathbb{C}$,
given by
\begin{equation}
\left\langle h_{+},h_{+}\right\rangle _{\mathscr{K}}:=\left\langle h_{+},\theta h_{+}\right\rangle ,\label{eq:rp7}
\end{equation}
a renormalized inner product; see (\ref{eq:rp4})-(\ref{eq:rp5}).

Set $q\left(h_{+}\right)=\text{class}\left(h_{+}\right)=h_{+}+\mathscr{N}$,
consider $q$ as a contractive operator,
\begin{equation}
\xymatrix{\mathscr{H}_{+}\ar[r]\ar@/^{1.5pc}/[rr]^{q} & \mathscr{H}_{+}/\mathscr{N}\ar[r] & \left(\mathscr{H}_{+}/\mathscr{N}\right)^{\sim}}
=\text{Hilbert completion \ensuremath{=\mathscr{K}}}.\label{eq:rp7a}
\end{equation}
\begin{rem*}
Constructing physical Hilbert spaces entail completions, often a completion
of a suitable space of functions. What can happen is that the completion
may fail to be a Hilbert space of \emph{functions}, but rather a suitable
Hilbert space of \emph{distributions}. Recall that a completion, say
$\mathscr{H}$, is defined axiomatically, and the ``real'' secret
is revealed only when the elements in $\mathscr{H}$ are identified;
see \exaref{rp} below. 
\end{rem*}

\subsection{Factorizations of $E_{+}\theta E_{+}$}

Given the basic framework of OS reflection positivity, the operator
$E_{+}\theta E_{+}$ plays a crucial role since OS positivity is defined
directly from this operator. We show that the operator $q$ from (\ref{eq:rp7a})
offers a canonical factorization of $E_{+}\theta E_{+}=q^{*}q$. But
we further show that this factorization is universal; see \corref{TB}.
\begin{thm}
\label{thm:TB}Let $\mathscr{H}$, $\theta$, $E_{+}$ be as above,
$\mathscr{H}_{+}:=E_{+}\mathscr{H}$. Then TFAE:
\begin{enumerate}
\item \label{enu:TBa}$E_{+}\theta E_{+}\geq0$, O.S.-positivity; and
\item \label{enu:TBb}there is a Hilbert space $\mathscr{L}$, and a bounded
operator $B:\mathscr{H}_{+}\rightarrow\mathscr{L}$ such that 
\begin{equation}
E_{+}\theta E_{+}=B^{*}B;\label{eq:tb1}
\end{equation}
see \figref{TB}. 
\end{enumerate}
\end{thm}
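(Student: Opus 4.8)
The plan is to prove the two implications separately, with all of the substance lying in $(\ref{enu:TBa})\Rightarrow(\ref{enu:TBb})$. The reverse direction $(\ref{enu:TBb})\Rightarrow(\ref{enu:TBa})$ is immediate: if $E_{+}\theta E_{+}=B^{*}B$ for some bounded $B:\mathscr{H}_{+}\rightarrow\mathscr{L}$, then for every $h_{+}\in\mathscr{H}_{+}$ we have $\left\langle h_{+},E_{+}\theta E_{+}h_{+}\right\rangle =\left\langle Bh_{+},Bh_{+}\right\rangle _{\mathscr{L}}=\left\Vert Bh_{+}\right\Vert _{\mathscr{L}}^{2}\geq0$, and for a general $h\in\mathscr{H}$ one reduces to $h_{+}:=E_{+}h$ because $\left\langle h,E_{+}\theta E_{+}h\right\rangle =\left\langle E_{+}h,\theta E_{+}h\right\rangle $. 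This is exactly $(\ref{eq:rp5})$.

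For the forward direction I would take the factorization already singled out in $(\ref{eq:rp6})$--$(\ref{eq:rp7a})$ and verify that the canonical map $q$ does the job, i.e. set $\mathscr{L}:=\mathscr{K}$ and $B:=q$. The argument proceeds in three steps. First, positivity $(\ref{eq:rp5})$ guarantees that the sesquilinear form $(\ref{eq:rp7})$ is nonnegative, so Cauchy--Schwarz applies to it; consequently its form-theoretic null set coincides with $\mathscr{N}$ from $(\ref{eq:rp6})$, $\mathscr{N}$ is a closed subspace, and $(\ref{eq:rp7})$ descends to a genuine inner product on $\mathscr{H}_{+}/\mathscr{N}$ whose completion is $\mathscr{K}$. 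Second, $q$ is contractive: since $\theta$ is unitary, $\left\Vert qh_{+}\right\Vert _{\mathscr{K}}^{2}=\left\langle h_{+},\theta h_{+}\right\rangle \leq\left\Vert h_{+}\right\Vert \,\left\Vert \theta h_{+}\right\Vert =\left\Vert h_{+}\right\Vert ^{2}$, so $q$ is bounded and its Hilbert-space adjoint $q^{*}:\mathscr{K}\rightarrow\mathscr{H}_{+}$ exists. Third, I would identify $q^{*}q$ weakly: for $h_{+},k_{+}\in\mathscr{H}_{+}$, $\left\langle q^{*}qh_{+},k_{+}\right\rangle _{\mathscr{H}_{+}}=\left\langle qh_{+},qk_{+}\right\rangle _{\mathscr{K}}=\left\langle h_{+},\theta k_{+}\right\rangle =\left\langle h_{+},E_{+}\theta E_{+}k_{+}\right\rangle =\left\langle E_{+}\theta E_{+}h_{+},k_{+}\right\rangle $, using $E_{+}h_{+}=h_{+}$, $E_{+}k_{+}=k_{+}$, and selfadjointness of $E_{+}\theta E_{+}$. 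Since this holds for all $k_{+}$, we conclude $q^{*}q=E_{+}\theta E_{+}$ as operators on $\mathscr{H}_{+}$, which is precisely $(\ref{eq:tb1})$.

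The step I expect to require the most care is the third one, because $q$ intertwines two different Hilbert-space structures---the original norm on $\mathscr{H}_{+}$ and the renormalized norm on $\mathscr{K}$---so the adjoint $q^{*}$ must be taken with respect to the correct pairings, and its very existence relies on the boundedness established in the second step. One must also check that the $\mathscr{N}$ written algebraically as $\ker\left(E_{+}\theta E_{+}\right)$ in $(\ref{eq:rp6})$ really is the form-null space, which is again exactly where positivity $(\ref{eq:rp5})$ enters. For readers who want only existence, I would note a quick alternative: the spectral factorization $B:=\left(E_{+}\theta E_{+}\right)^{1/2}\big|_{\mathscr{H}_{+}}$ with $\mathscr{L}:=\mathscr{H}_{+}$; here the identities $E_{+}\left(E_{+}\theta E_{+}\right)=E_{+}\theta E_{+}=\left(E_{+}\theta E_{+}\right)E_{+}$ show that the positive square root leaves $\mathscr{H}_{+}$ invariant, giving $B^{*}B=E_{+}\theta E_{+}$ at once. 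The merit of the $q$-version, and the reason I would foreground it, is that it is the factorization that feeds directly into the canonicity and universality statements of \corref{TB}.
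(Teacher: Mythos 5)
Your proposal is correct and follows essentially the same route as the paper: the easy direction is the observation that $B^{*}B\geq0$, and the substantive direction takes $\mathscr{L}=\mathscr{K}$, $B=q$, with the identity $q^{*}q=E_{+}\theta E_{+}$ established exactly as in the paper's \lemref{TB} (Cauchy--Schwarz to identify the form-null space with $\mathscr{N}$, then the weak computation of the adjoint across the two inner products). Your aside about the spectral square root $\left(E_{+}\theta E_{+}\right)^{1/2}$ is a valid shortcut for bare existence, but, as you note, it is the $q$-factorization that carries the universality content of \corref{TB}.
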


\begin{rem}
We show below that $\mathscr{H}_{+}\xrightarrow{\;q\;}\mathscr{K}$
is a \emph{universal }solution to the factorization problem (\ref{eq:tb1})
(see \corref{TB}). 
\end{rem}

\begin{proof}[Proof of \thmref{TB}]
The implication (\ref{enu:TBa})$\Longrightarrow$(\ref{enu:TBb})
is contained in \lemref{TB} below. Indeed, if (\ref{enu:TBa}) holds,
then we may take $\mathscr{L}=\mathscr{K}$, and $B=q:\mathscr{H}_{+}\rightarrow\mathscr{K}$;
see (\ref{eq:rp7a}). 

Conversely; suppose (\ref{enu:TBb}) holds (see \figref{TB}), then
it is immediate that $E_{+}\theta E_{+}=B^{*}B\geq0$, by general
theory; see \defref{OS} above.
\end{proof}
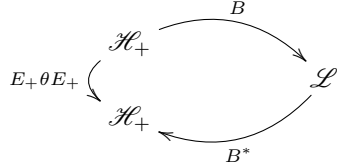
\begin{figure}[H]
\[ \xymatrix@R-2pc{\mathscr{H}_{+}\ar@/_{1.3pc}/[dd]_{E_{+}\theta E_{+}}\ar@/^{1.3pc}/[rrd]^{B}\\  &  & \mathscr{L}\ar@/^{1.3pc}/[dll]^{B^{*}}\\ \mathscr{H}_{+} } \]

\caption{\label{fig:TB}A factorization of $E_{+}\theta E_{+}$.}
\end{figure}
\begin{lem}
\label{lem:TB}Let $\mathscr{H}$, $\theta$, $E_{+}$ be as above.
We assume further that $E_{+}\theta E_{+}\geq0$, i.e., O.S.-positivity
holds. Set $\mathscr{H}_{+}=E_{+}\mathscr{H}$. Let $\mathscr{K}$
be the induced Hilbert space 
\begin{equation}
\mathscr{K}=\left(\mathscr{H}_{+}/\left\{ h_{+}\mathrel{;}\left\langle h_{+},\theta h_{+}\right\rangle =0\right\} \right)^{\sim}\label{eq:k1}
\end{equation}
as in (\ref{eq:rp7a}), and let $q:\mathscr{H}_{+}\rightarrow\mathscr{K}$
be the canonical contraction. Then the adjoint operator $q^{*}:\mathscr{K}\rightarrow\mathscr{H}_{+}$
is given by 
\begin{equation}
q^{*}\left(q\left(h_{+}\right)\right)=E_{+}\theta h_{+},\;\forall h_{+}\in\mathscr{H}_{+}.\label{eq:k2}
\end{equation}
In particular, the formula (\ref{eq:k2}) defines $q^{*}$ unambiguously. 
\end{lem}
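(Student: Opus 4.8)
The plan is to reduce the whole statement to the defining property of an adjoint, evaluated on the dense range of $q$. First I would record that $q$ is genuinely contractive, so that $q^{*}\colon\mathscr{K}\to\mathscr{H}_{+}$ exists as a bounded operator in the first place. Since $\theta$ is unitary, $E_{+}\theta E_{+}$ is a contraction, and for $h_{+}\in\mathscr{H}_{+}$ we have $E_{+}h_{+}=h_{+}$, hence
\[
\left\Vert q\left(h_{+}\right)\right\Vert _{\mathscr{K}}^{2}=\left\langle h_{+},\theta h_{+}\right\rangle =\left\langle h_{+},E_{+}\theta E_{+}h_{+}\right\rangle \leq\left\Vert h_{+}\right\Vert _{\mathscr{H}}^{2},
\]
in accordance with the assertion in (\ref{eq:rp7a}) that $q$ is contractive. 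By construction $q\left(\mathscr{H}_{+}\right)$ is exactly $\mathscr{H}_{+}/\mathscr{N}$, which is dense in its completion $\mathscr{K}$; so it will suffice to pin down $q^{*}$ on vectors of the form $q\left(g_{+}\right)$.

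Next I would polarize the defining seminorm. Because $\theta^{*}=\theta$, the form $\left\langle g_{+},h_{+}\right\rangle _{\mathscr{K}}:=\left\langle g_{+},\theta h_{+}\right\rangle _{\mathscr{H}}$ is conjugate-symmetric and agrees on the diagonal with (\ref{eq:rp7}); thus $\left\langle q\left(h_{+}\right),q\left(g_{+}\right)\right\rangle _{\mathscr{K}}=\left\langle h_{+},\theta g_{+}\right\rangle _{\mathscr{H}}$ for all $g_{+},h_{+}\in\mathscr{H}_{+}$. The core computation is then to feed this into the adjoint identity $\left\langle q^{*}\left(q\left(h_{+}\right)\right),g_{+}\right\rangle _{\mathscr{H}}=\left\langle q\left(h_{+}\right),q\left(g_{+}\right)\right\rangle _{\mathscr{K}}$ and to recognize the right-hand side. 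Using $E_{+}^{*}=E_{+}$, the fact that $E_{+}g_{+}=g_{+}$, and $\theta^{*}=\theta$, I would compute
\[
\left\langle E_{+}\theta h_{+},g_{+}\right\rangle _{\mathscr{H}}=\left\langle \theta h_{+},g_{+}\right\rangle _{\mathscr{H}}=\left\langle h_{+},\theta g_{+}\right\rangle _{\mathscr{H}}=\left\langle q\left(h_{+}\right),q\left(g_{+}\right)\right\rangle _{\mathscr{K}}.
\]
Since $E_{+}\theta h_{+}\in\mathscr{H}_{+}$ and this holds for every $g_{+}\in\mathscr{H}_{+}$, the two vectors $E_{+}\theta h_{+}$ and $q^{*}\left(q\left(h_{+}\right)\right)$ of $\mathscr{H}_{+}$ coincide, which is precisely (\ref{eq:k2}).

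Finally I would settle the unambiguity. As the adjoint of a bounded operator, $q^{*}$ is a well-defined bounded operator on all of $\mathscr{K}$, and by density of $q\left(\mathscr{H}_{+}\right)$ the formula (\ref{eq:k2}) determines $q^{*}$ everywhere by continuity. The one step that is not purely formal, and which I expect to be the main (if minor) obstacle, is that the right-hand side of (\ref{eq:k2}) must depend only on the class $q\left(h_{+}\right)$ and not on the chosen representative $h_{+}$. This is exactly the content of the identification $\mathscr{N}=\ker\left(E_{+}\theta E_{+}\right)$ in (\ref{eq:rp6}): if $q\left(h_{+}\right)=q\left(h_{+}'\right)$ then $h_{+}-h_{+}'\in\mathscr{N}$, so $E_{+}\theta E_{+}\left(h_{+}-h_{+}'\right)=0$, and since $E_{+}\left(h_{+}-h_{+}'\right)=h_{+}-h_{+}'$ this gives $E_{+}\theta h_{+}=E_{+}\theta h_{+}'$. (That $\mathscr{N}$ really is the null space of the seminorm, not just contained in it, follows because $E_{+}\theta E_{+}\geq0$ admits a square root, so $\left\langle h_{+},\theta h_{+}\right\rangle =0$ forces $E_{+}\theta E_{+}h_{+}=0$.) With this the definition of $q^{*}$ via (\ref{eq:k2}) is consistent, completing the proof.
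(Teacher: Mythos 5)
Your proof is correct and follows essentially the same route as the paper: establish the adjoint identity $\left\langle q^{*}\left(q\left(h_{+}\right)\right),g_{+}\right\rangle _{\mathscr{H}}=\left\langle q\left(h_{+}\right),q\left(g_{+}\right)\right\rangle _{\mathscr{K}}$ on the dense range of $q$, and check that $h_{+}\mapsto E_{+}\theta h_{+}$ descends to the quotient. The only (immaterial) variation is in that last step: you use the positive square root of $E_{+}\theta E_{+}$ to show that $\left\langle h_{+},\theta h_{+}\right\rangle =0$ forces $E_{+}\theta h_{+}=0$, whereas the paper obtains the same conclusion by applying the Cauchy--Schwarz inequality to the positive semidefinite form $\left\langle \cdot,\theta\,\cdot\right\rangle $ on $\mathscr{H}_{+}$.
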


\begin{proof}
(i) We first show that the formula (\ref{eq:k2}) defines an operator:
We must show that if 
\begin{equation}
\left\langle h_{+},\theta h_{+}\right\rangle =0,\label{eq:k3}
\end{equation}
then $E_{+}\theta h_{+}=0$. But by Schwarz, for all $l_{+}\in\mathscr{H}_{+}$,
we have 
\[
\left|\left\langle l_{+},\theta h_{+}\right\rangle \right|^{2}\leq\left\langle l_{+},\theta l_{+}\right\rangle \left\langle h_{+},\theta h_{+}\right\rangle \underset{\text{by }\left(\ref{eq:k3}\right)}{=}0
\]
and so $E_{+}\theta h_{+}=0$ as required in (\ref{eq:k2}).

(ii) Since $q^{*}$ is contractive, it is determined uniquely by its
values on a dense subspace of vectors in $\mathscr{K}$; in this case
$\left\{ q\left(h_{+}\right)\mathrel{;}h_{+}\in\mathscr{H}_{+}\right\} $. 

(iii) It remains to verify that 
\begin{equation}
\left\langle q^{*}\left(q\left(h_{+}\right)\right),l_{+}\right\rangle _{\mathscr{H}}=\left\langle E_{+}\theta h_{+},l_{+}\right\rangle _{\mathscr{H}}=\left\langle h_{+},\theta l_{+}\right\rangle _{\mathscr{H}}\left(=\left\langle q\left(h_{+}\right),q\left(l_{+}\right)\right\rangle _{\mathscr{K}}\right),\label{eq:k4}
\end{equation}
$\forall h_{+},l_{+}\in\mathscr{H}_{+}$. Details: 
\begin{align*}
\text{LHS}_{\left(\ref{eq:k4}\right)} & =\left\langle E_{+}\theta h_{+},l_{+}\right\rangle =\left\langle \theta h_{+},E_{+}l_{+}\right\rangle \\
 & =\left\langle \theta h_{+},l_{+}\right\rangle =\left\langle h_{+},\theta l_{+}\right\rangle =\text{RHS}_{\left(\ref{eq:k4}\right)}
\end{align*}
where we used the assumptions (\ref{eq:rp1}) and (\ref{eq:rp5}).
In the computation, we omitted the subscript $\mathscr{H}$ in the
inner products. 
\end{proof}
\begin{cor}
\label{cor:TB}The solution $q:\mathscr{H}_{+}\rightarrow\mathscr{K}$
to the factorization problem $E_{+}\theta E_{+}=q^{*}q$ (see (\ref{eq:tb1})),
in the O.S.-p. case, is \uline{universal} in the sense that if
$\mathscr{H}_{+}\xrightarrow{\;B\;}\mathscr{L}$ is any solution to
(\ref{eq:tb1}) in \thmref{TB}, then there is a unique isomorphism
$\mathscr{K}\xrightarrow{\;b\;}\mathscr{L}$ such that $b\,q=B$,
see \figref{TB2}; and $b^{*}b=I_{\mathscr{K}}$, so $b$ is isometric. 
\end{cor}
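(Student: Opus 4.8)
The plan is to treat this as the standard minimality (universal property) argument familiar from GNS-type constructions. Since $q\left(\mathscr{H}_{+}\right)$ is by definition dense in the completion $\mathscr{K}$ (see (\ref{eq:rp7a})), any candidate $b$ with $bq=B$ is forced to equal $B$ on this dense subspace; hence the whole corollary reduces to checking that the prescription $b\left(q\left(h_{+}\right)\right):=B\left(h_{+}\right)$ is a well-defined isometry. I would therefore \emph{define} $b$ on the dense subspace $q\left(\mathscr{H}_{+}\right)\subset\mathscr{K}$ by this formula and then extend by continuity.

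The heart of the matter is a single inner-product computation. For $h_{+}\in\mathscr{H}_{+}$ I would use the hypothesis $E_{+}\theta E_{+}=B^{*}B$ (this is (\ref{eq:tb1}), the assertion that $B$ solves the factorization problem of \thmref{TB}) together with $E_{+}h_{+}=h_{+}$ and $E_{+}^{*}=E_{+}$ to write
\[
\left\Vert Bh_{+}\right\Vert _{\mathscr{L}}^{2}=\left\langle h_{+},B^{*}Bh_{+}\right\rangle _{\mathscr{H}}=\left\langle h_{+},E_{+}\theta E_{+}h_{+}\right\rangle _{\mathscr{H}}=\left\langle h_{+},\theta h_{+}\right\rangle _{\mathscr{H}}=\left\Vert q\left(h_{+}\right)\right\Vert _{\mathscr{K}}^{2},
\]
the last equality being the definition of the renormalized norm (\ref{eq:rp7}). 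This identity does double duty: applied to $h_{+}-h_{+}'$, it shows that $q\left(h_{+}\right)=q\left(h_{+}'\right)$ forces $B\left(h_{+}\right)=B\left(h_{+}'\right)$, so $b$ is well defined on $q\left(\mathscr{H}_{+}\right)$; and it shows simultaneously that $b$ is norm-preserving there.

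Finally I would extend $b$ from the dense subspace to all of $\mathscr{K}$ by bounded linear extension, obtaining an isometry $b:\mathscr{K}\rightarrow\mathscr{L}$; an isometry is exactly the statement $b^{*}b=I_{\mathscr{K}}$. The relation $bq=B$ then holds by construction on $\mathscr{H}_{+}$, and uniqueness of $b$ is immediate, since any two bounded operators agreeing on the dense set $q\left(\mathscr{H}_{+}\right)$ coincide. I would close by remarking that the word ``isomorphism'' in the statement refers to $b$ being an isometric isomorphism of $\mathscr{K}$ onto the closed range $\overline{B\left(\mathscr{H}_{+}\right)}=b\left(\mathscr{K}\right)\subseteq\mathscr{L}$; in the minimal model, where $\mathscr{L}$ is generated by $B\left(\mathscr{H}_{+}\right)$, this range is all of $\mathscr{L}$ and $b$ is genuinely unitary.

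As for difficulty, I do not expect a deep obstacle: the only point requiring care is confirming well-definedness across the quotient-and-completion in (\ref{eq:rp7a}), and the displayed isometry identity dispatches this at once. It is worth keeping the parallel with \lemref{TB} in mind, where $q^{*}$ was pinned down by the same Schwarz-type control on $\theta$; the present argument is the companion ``co-universal'' statement, extracting $b$ rather than $q^{*}$ from the same factorization data.
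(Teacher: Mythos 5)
Your proposal is correct and follows essentially the same route as the paper: both define $b$ on the dense range of $q$ by $b\left(q\left(h_{+}\right)\right):=B\left(h_{+}\right)$ and verify well-definedness from the identity $E_{+}\theta E_{+}=q^{*}q=B^{*}B$. Your write-up is in fact slightly more complete, since the displayed identity $\left\Vert Bh_{+}\right\Vert _{\mathscr{L}}^{2}=\left\Vert q\left(h_{+}\right)\right\Vert _{\mathscr{K}}^{2}$ explicitly delivers the isometry $b^{*}b=I_{\mathscr{K}}$, which the paper's proof leaves as ``immediate.''
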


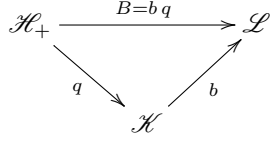
\begin{figure}
\[
\xymatrix{\mathscr{H}_{+}\ar[rr]^{B=b\,q}\ar[dr]_{q} &  & \mathscr{L}\\
 & \mathscr{K}\ar[ru]_{b}
}
\]
\caption{\label{fig:TB2}Universality of $q$.}

\end{figure}
\begin{proof}
Let $\mathscr{H}_{+}\xrightarrow{\;B\;}\mathscr{L}$ be a solution
to (\ref{eq:tb1}) in \thmref{TB}; we then define the isomorphism
$b$ (so as to complete the diagram in \figref{TB2}) as follow: 

For $h_{+}\in\mathscr{H}_{+}$, set 
\begin{equation}
b\left(q\left(h_{+}\right)\right):=B\left(h_{+}\right).\label{eq:tb2}
\end{equation}
Now this defines an operator $b:\mathscr{K}\rightarrow\mathscr{L}$,
since if $q\left(h_{+}\right)=0$, then $0=q^{*}q\left(h_{+}\right)=E_{+}\theta E_{+}=B^{*}B\left(h_{+}\right)$,
so $0=\left\langle h_{+},B^{*}Bh_{+}\right\rangle =\left\Vert Bh_{+}\right\Vert ^{2}$,
and so $Bh_{+}=0$ as required. 

Now it is immediate from (\ref{eq:tb2}), that this operator $b:\mathscr{K}\rightarrow\mathscr{L}$
has the desired properties, in particular that the universality holds;
see \figref{TB2}. 
\end{proof}
\begin{lem}
\label{lem:CP}Let $\mathscr{H}$ be a Hilbert space, and $\theta$
a reflection in $\mathscr{H}$ (see (\ref{eq:rp1})). Let $P:=proj\left\{ x\in\mathscr{H}\mathrel{;}\theta x=x\right\} $,
so $\theta=2P-I_{\mathscr{H}}$. Let $\mathscr{K}$ be the new Hilbert
space in (\ref{eq:rp7a}). Let $C:P\mathscr{H}\longrightarrow P^{\perp}\mathscr{H}$
be the contraction, such that 
\begin{equation}
\mathscr{H}_{+}=\left\{ x+Cx\mathrel{;}x\in P\mathscr{H}\right\} ,\label{eq:cp1}
\end{equation}
and $\theta\left(x+Cx\right)=x-Cx$; then for $h_{+}=x+Cx$, we have
\begin{equation}
\left\langle h_{+},\theta h_{+}\right\rangle _{\mathscr{H}}=\left\Vert h_{+}\right\Vert _{\mathscr{K}}^{2}=\left\Vert \left(I_{\mathscr{H}}-C^{*}C\right)^{\frac{1}{2}}x\right\Vert _{\mathscr{H}}^{2}.\label{eq:cp2}
\end{equation}
\end{lem}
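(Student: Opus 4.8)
The plan is to reduce both equalities to elementary orthogonal computations in $\mathscr{H}$, since the renormalized inner product on $\mathscr{K}$ is built directly from $\theta$. The first equality $\left\langle h_{+},\theta h_{+}\right\rangle _{\mathscr{H}}=\left\Vert h_{+}\right\Vert _{\mathscr{K}}^{2}$ is immediate from the definition (\ref{eq:rp7}) of the $\mathscr{K}$-inner product, namely $\left\langle h_{+},h_{+}\right\rangle _{\mathscr{K}}:=\left\langle h_{+},\theta h_{+}\right\rangle _{\mathscr{H}}$; so no work is required there.

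For the substantive second equality, I would start from the decomposition $h_{+}=x+Cx$ with $x\in P\mathscr{H}$ and $Cx\in P^{\perp}\mathscr{H}$, and use that $\theta$ acts as $+1$ on $P\mathscr{H}$ and as $-1$ on $P^{\perp}\mathscr{H}$, so that $\theta h_{+}=x-Cx$. Expanding the inner product $\left\langle x+Cx,\,x-Cx\right\rangle _{\mathscr{H}}$ and observing that the two cross terms $\left\langle x,Cx\right\rangle $ and $\left\langle Cx,x\right\rangle $ vanish by orthogonality of $P\mathscr{H}$ and $P^{\perp}\mathscr{H}$, the computation collapses to $\left\Vert x\right\Vert _{\mathscr{H}}^{2}-\left\Vert Cx\right\Vert _{\mathscr{H}}^{2}$.

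The last step is to rewrite this difference in the claimed square-root form. Using the definition of the adjoint, $\left\Vert Cx\right\Vert ^{2}=\left\langle C^{*}Cx,x\right\rangle $, so the expression equals $\left\langle \left(I_{\mathscr{H}}-C^{*}C\right)x,x\right\rangle $. I would then invoke that $C$ is a contraction, whence $I_{\mathscr{H}}-C^{*}C\geq0$ on $P\mathscr{H}$, so that the positive square root $\left(I_{\mathscr{H}}-C^{*}C\right)^{1/2}$ exists and $\left\langle \left(I_{\mathscr{H}}-C^{*}C\right)x,x\right\rangle =\left\Vert \left(I_{\mathscr{H}}-C^{*}C\right)^{1/2}x\right\Vert _{\mathscr{H}}^{2}$, which is exactly (\ref{eq:cp2}).

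The only point needing any care, and the natural candidate for the main obstacle (though it is mild), is the justification that $I_{\mathscr{H}}-C^{*}C$ is a nonnegative selfadjoint operator, so that its square root is meaningful and the last identity is legitimate; this is precisely where the contractivity of $C$, already guaranteed by \lemref{EP1}, enters. Everything else is a one-line orthogonal expansion, so I expect the proof to be short.
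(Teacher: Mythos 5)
Your proposal is correct and follows essentially the same route as the paper's own proof: expand $\left\langle x+Cx,\,x-Cx\right\rangle$ using the orthogonality of $P\mathscr{H}$ and $P^{\perp}\mathscr{H}$, rewrite $\left\Vert x\right\Vert ^{2}-\left\Vert Cx\right\Vert ^{2}$ as $\left\langle x,\left(I-C^{*}C\right)x\right\rangle$, and pass to the positive square root. The extra sentence justifying $I_{\mathscr{H}}-C^{*}C\geq0$ via contractivity of $C$ is a welcome explicit remark that the paper leaves implicit.
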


\begin{proof}
By $\mathscr{K}$ we refer here to the completion (\ref{eq:rp7a});
see also \figref{rp}. For the LHS in (\ref{eq:cp2}), we have
\begin{align*}
\left\langle h_{+},\theta h_{+}\right\rangle  & =\left\langle x+Cx,x-Cx\right\rangle \\
 & =\left\Vert x\right\Vert ^{2}-\left\Vert Cx\right\Vert ^{2}\\
 & =\left\Vert x\right\Vert ^{2}-\left\langle x,C^{*}Cx\right\rangle \\
 & =\left\langle x,\left(I-C^{*}C\right)x\right\rangle \\
 & =\left\Vert \left(I-C^{*}C\right)^{\frac{1}{2}}x\right\Vert ^{2}=\text{RHS}_{\left(\ref{eq:cp2}\right)},
\end{align*}
where we have dropped the subscript $\mathscr{H}$ in the computation.
\end{proof}
\begin{rem}
The conclusion in \lemref{CP} states that the range $Ran\big(\left(I-C^{*}C\right)^{\frac{1}{2}}\big)$
is a realization of the induced Hilbert space $\mathscr{K}$ in (\ref{eq:rp7a}),
so 
\[
\left\Vert q\left(h_{+}\right)\right\Vert _{\mathscr{K}}=\left\Vert \left(I-C^{*}C\right)^{\frac{1}{2}}x\right\Vert _{\mathscr{H}}
\]
where $h_{+}=x+Cx$, $x\in P\mathscr{H}$. 
\end{rem}

\begin{lem}
\label{lem:UK}Let the setting be as above, see (\ref{eq:rp1})-(\ref{eq:rp3}).
Then $\widetilde{U}:\mathscr{K}\rightarrow\mathscr{K}$, given by
\begin{equation}
\widetilde{U}\left(\mbox{class}\:h_{+}\right)=\mbox{class}\left(Uh_{+}\right),\;h_{+}\in\mathscr{H}_{+}\label{eq:rp8}
\end{equation}
where class $h_{+}$ refers to the quotient in (\ref{eq:rp6}), is
\emph{selfadjoint} and \emph{contractive} (see \figref{rp}).
\end{lem}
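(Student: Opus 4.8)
The plan is to derive all three assertions---that $\widetilde{U}$ is well defined, contractive, and selfadjoint---from the single algebraic identity
\[
\theta U = U^{*}\theta,
\]
obtained by right-multiplying (\ref{eq:rp2}) by $\theta$ and using $\theta^{2}=I_{\mathscr{H}}$. Throughout I would write $\langle a_{+},b_{+}\rangle_{\mathscr{K}}=\langle a_{+},\theta b_{+}\rangle_{\mathscr{H}}$ for the renormalized form (\ref{eq:rp7}); by O.S.-positivity (\ref{eq:rp5}) this is a genuine positive-semidefinite Hermitian form on $\mathscr{H}_{+}$, so Cauchy--Schwarz is available for it. Note also that (\ref{eq:rp3}), iterated, gives $U^{n}h_{+}\in\mathscr{H}_{+}$ for every $n\ge 0$, so each class $q(U^{n}h_{+})$ is meaningful. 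Selfadjointness is then the easy part: for $h_{+},k_{+}\in\mathscr{H}_{+}$ one has on the dense image $q(\mathscr{H}_{+})$
\begin{align*}
\langle \widetilde{U}q(h_{+}),q(k_{+})\rangle_{\mathscr{K}}
&=\langle Uh_{+},\theta k_{+}\rangle_{\mathscr{H}}
=\langle h_{+},U^{*}\theta k_{+}\rangle_{\mathscr{H}}\\
&=\langle h_{+},\theta Uk_{+}\rangle_{\mathscr{H}}
=\langle q(h_{+}),\widetilde{U}q(k_{+})\rangle_{\mathscr{K}},
\end{align*}
the crucial middle step being exactly $U^{*}\theta=\theta U$. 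Once $\widetilde{U}$ is known to be bounded, this symmetry on a dense set extends by continuity and makes the everywhere-defined $\widetilde{U}$ selfadjoint.

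The substantive step is contractivity (which also yields well-definedness). Here I would fix $h_{+}$ and study the nonnegative scalar sequence
\[
a_{n}:=\|q(U^{n}h_{+})\|_{\mathscr{K}}^{2}=\langle U^{n}h_{+},\theta U^{n}h_{+}\rangle_{\mathscr{H}},
\]
so that $a_{0}=\|q(h_{+})\|_{\mathscr{K}}^{2}$ and $a_{1}=\|\widetilde{U}q(h_{+})\|_{\mathscr{K}}^{2}$; using $\theta U^{n}=(U^{*})^{n}\theta$ one rewrites $a_{n}=\langle U^{2n}h_{+},\theta h_{+}\rangle_{\mathscr{H}}$. Two features of $(a_{n})$ then close the argument. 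First, $(a_{n})$ is \emph{bounded}: since $U$ and $\theta$ are unitary on $\mathscr{H}$, Cauchy--Schwarz in $\mathscr{H}$ gives $a_{n}=\langle U^{2n}h_{+},\theta h_{+}\rangle_{\mathscr{H}}\le\|h_{+}\|_{\mathscr{H}}^{2}$ uniformly in $n$. Second, $(a_{n})$ is \emph{log-convex}, $a_{n}^{2}\le a_{n-1}a_{n+1}$: applying Cauchy--Schwarz for the semidefinite form $\langle\cdot,\cdot\rangle_{\mathscr{K}}$ to the pair $U^{n-1}h_{+},\,U^{n+1}h_{+}$ and using the same rewriting $\langle U^{n-1}h_{+},\theta U^{n+1}h_{+}\rangle_{\mathscr{H}}=\langle U^{2n}h_{+},\theta h_{+}\rangle_{\mathscr{H}}=a_{n}$ produces exactly this inequality. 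A bounded, log-convex, nonnegative sequence must satisfy $a_{1}\le a_{0}$: if $a_{0}=0$ then $a_{1}^{2}\le a_{0}a_{2}$ forces $a_{1}=0$, while if $a_{0}>0$ and $a_{1}>a_{0}$ the ratios $a_{n+1}/a_{n}\ge a_{1}/a_{0}>1$ would drive $a_{n}\to\infty$, contradicting the uniform bound. Hence $\|\widetilde{U}q(h_{+})\|_{\mathscr{K}}\le\|q(h_{+})\|_{\mathscr{K}}$, which simultaneously shows $\widetilde{U}$ is well defined (take $q(h_{+})=0$) and contractive on $q(\mathscr{H}_{+})$, so it extends uniquely to a contraction on all of $\mathscr{K}$.

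I expect the main obstacle to be precisely this passage from unitarity of $U$ on $\mathscr{H}$ to contractivity of $\widetilde{U}$ in the \emph{renormalized} space $\mathscr{K}$: a direct norm estimate fails because $q$ collapses the kernel $\mathscr{N}=\ker(E_{+}\theta E_{+})$ and genuinely distorts norms, so the inherited operator need not even be bounded a priori. The device that bridges the gap is the log-convexity-plus-boundedness mechanism (a discrete Hadamard three-lines phenomenon), and the algebraic identity $a_{n}=\langle U^{2n}h_{+},\theta h_{+}\rangle_{\mathscr{H}}$ is the computational key that makes both the uniform bound and the log-convexity transparent; everything else (selfadjointness, well-definedness, the final density extension) is then routine.
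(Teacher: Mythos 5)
Your proposal is correct and follows essentially the same route as the paper: selfadjointness comes from the identity $\theta U=U^{*}\theta$ exactly as in the paper's proof, and your contractivity argument (log-convexity $a_{n}^{2}\leq a_{n-1}a_{n+1}$ from Cauchy--Schwarz in $\mathscr{K}$, combined with the uniform bound $a_{n}\leq\left\Vert h_{+}\right\Vert _{\mathscr{H}}^{2}$ from unitarity of $U$ and $\theta$ in $\mathscr{H}$) is just a repackaging of the paper's iterated-Schwarz chain $\left\Vert \widetilde{U}x\right\Vert _{\mathscr{K}}^{2}\leq\Vert\widetilde{U}^{2^{n+1}}x\Vert_{\mathscr{K}}^{1/2^{n}}\left\Vert x\right\Vert _{\mathscr{K}}^{1+\frac{1}{2}+\cdots+\frac{1}{2^{n}}}$ followed by extraction of $2^{n}$-th roots. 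If anything, your version is slightly more careful, since you make the uniform bound explicit where the paper appeals loosely to ``the spectral-radius formula'' before boundedness of $\widetilde{U}$ is established.
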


\begin{proof}
(See \cite{MR0496171,Jor86,MR874059,MR1641554,MR1895530}.) Despite
the fact that proof details in one form or the other are in the literature,
we feel that the spectral theoretic features of the argument have
not been stressed; at least not in a form which we shall need below.

Denote the ``new'' inner product in $\mathscr{K}$ by $\left\langle \cdot,\cdot\right\rangle _{\mathscr{K}}$,
and the initial inner product in $\mathscr{H}$ by $\left\langle \cdot,\cdot\right\rangle $. 

\emph{$\widetilde{U}$ is symmetric}: Let $x,y\in\mathscr{H}_{+}$,
then 
\[
\langle x,\widetilde{U}y\rangle_{\mathscr{K}}=\left\langle x,\theta Uy\right\rangle =\left\langle x,U^{*}\theta y\right\rangle =\left\langle Ux,\theta y\right\rangle =\langle\widetilde{U}x,y\rangle_{\mathscr{K}}
\]
which is the desired conclusion.\emph{ }

\emph{$\widetilde{U}$ is contractive}: Let $x\in\mathscr{H}_{+}$,
then 
\begin{align*}
\left\Vert \widetilde{U}x\right\Vert _{\mathscr{K}}^{2} & =\left\langle Ux,\theta Ux\right\rangle =\left\langle Ux,U^{*}\theta x\right\rangle \\
 & =\left\langle U^{2}x,\theta x\right\rangle =\left\langle \widetilde{U}^{2}x,x\right\rangle _{\mathscr{K}}\\
 & \leq\left\Vert \widetilde{U}^{2}x\right\Vert _{\mathscr{K}}\cdot\left\Vert x\right\Vert _{\mathscr{K}}\qquad\left(\mbox{by Schwarz in}\;\mathscr{K}\right)\\
 & \leq\left\Vert \widetilde{U}^{4}x\right\Vert _{\mathscr{K}}^{\frac{1}{2}}\cdot\left\Vert x\right\Vert _{\mathscr{K}}^{1+\frac{1}{2}}\qquad\left(\mbox{by the first step}\right)\\
 & \leq\left\Vert \widetilde{U}^{2^{n+1}}x\right\Vert _{\mathscr{K}}^{\frac{1}{2^{n}}}\cdot\left\Vert x\right\Vert _{\mathscr{K}}^{1+\frac{1}{2}+\cdots+\frac{1}{2^{n}}}.\qquad\left(\mbox{by iteration}\right)
\end{align*}
By the spectral-radius formula, $\lim_{n\rightarrow\infty}\left\Vert \widetilde{U}^{2^{n}}x\right\Vert _{\mathscr{K}}^{\frac{1}{2^{n}}}=1$;
and we get $\left\Vert \widetilde{U}x\right\Vert _{\mathscr{K}}^{2}\leq\left\Vert x\right\Vert _{\mathscr{K}}^{2}$,
which is the desired contractivity.
\end{proof}
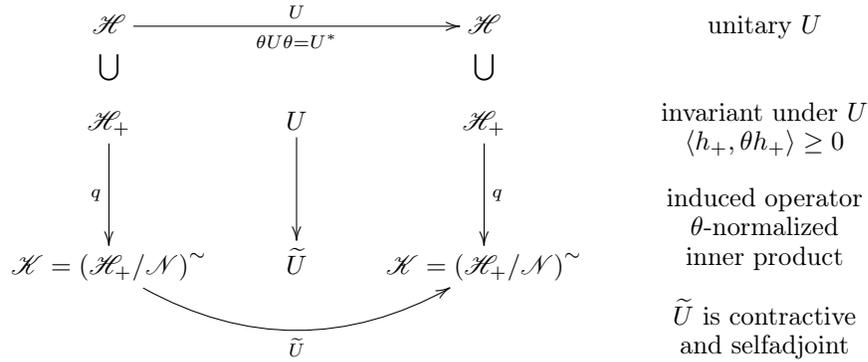
\begin{figure}
\[ \xymatrix@R-2pc{\mathscr{H}\ar[rr]_{\theta U \theta=U^{*}}^{U} &  & \mathscr{H} & \text{unitary }U\\ \bigcup &  & \bigcup\\ \mathscr{H}_{+}\ar[dd]_{q} & U\ar[dd] & \mathscr{H}_{+}\ar[dd]^{q} & \txt{invariant under \ensuremath{U} \\ \ensuremath{\left\langle h_{+},\theta h_{+}\right\rangle \geq0} }\\ \\ \mathscr{K}=\left(\mathscr{H}_{+}/\mathscr{N}\right)^{\sim}\ar@/_{2pc}/[rr]_{\widetilde{U}} & \widetilde{U} & \mathscr{K}=\left(\mathscr{H}_{+}/\mathscr{N}\right)^{\sim} & \txt{induced operator \\ \ensuremath{\theta}-normalized\\ inner product \\ \\ \ensuremath{\widetilde{U}} is contractive \\and selfadjoint} } \]

\caption{\label{fig:rp}Reflection positivity. A unitary operator $U$ transforms
into a selfadjoint contraction $\widetilde{U}$.}
\end{figure}
\begin{rem}
In the proof of \lemref{UK}, we have made an identification: 
\[
\mathscr{H}_{+}\ni x\longleftrightarrow q\left(x\right)\in\mathscr{K},
\]
see (\ref{eq:rp7a}). So the precise vectors are as follows: $\widetilde{U}q\left(x\right)=q\left(Ux\right)$,
$\left(x\in\mathscr{H}_{+}\right)$; see \figref{rp}. The proof is
in two steps: 

\uline{Step 1}. We verify the two conclusions for $\widetilde{U}$
(symmetry and contractivity) but only initially for the dense space
of vectors in $\mathscr{K}$: $\left\{ q\left(x\right)\mathrel{;}x\in\mathscr{H}_{+}\right\} $. 

\uline{Step 2}. Having the two properties verified on a dense subspace
in $\mathscr{K}$, it follows that the same conclusions will hold
also on $\mathscr{K}:=$ completion of $\left\{ q\left(x\right)\mathrel{;}x\in\mathscr{H}_{+}\right\} $.
The reason is that the two properties are preserved by passing to
limits; now limit in the $\mathscr{K}$-norm. 
\end{rem}

\begin{lem}
Let $\mathscr{H}$, $\mathscr{H}_{+}$, and $\theta$ be as above.
Set 
\begin{align*}
\mathscr{A}_{+}: & =\big\{ U\in\mathscr{H}\rightarrow\mathscr{H},\;\text{bounded operators},\\
 & \qquad U\mathscr{H}_{+}\subset\mathscr{H}_{+}\left(E_{+}UE_{+}=UE_{+}\right),\;\text{and }\theta U=U^{*}\theta\big\},
\end{align*}
then $U,V\in\mathscr{A}_{+}\Longrightarrow$ $UV\in\mathscr{A}_{+}$,
and $\left(UV\right)^{\sim}=\widetilde{U}\widetilde{V}$, where $\widetilde{U}$
is determined by 
\[
\widetilde{U}\left(q\left(h_{+}\right)\right)=q\left(Uh_{+}\right),\;\forall h_{+}\in\mathscr{H}_{+}.
\]
\end{lem}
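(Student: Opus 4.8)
The plan is to prove the two assertions separately: first that $\mathscr{A}_{+}$ is closed under composition, and then that the map $U\mapsto\widetilde{U}$ is multiplicative. For closure I would verify that $UV$ meets each of the three defining conditions of $\mathscr{A}_{+}$. Boundedness is automatic as a product of bounded operators, and invariance is immediate since $UV\mathscr{H}_{+}=U(V\mathscr{H}_{+})\subseteq U\mathscr{H}_{+}\subseteq\mathscr{H}_{+}$, equivalently $E_{+}(UV)E_{+}=(UV)E_{+}$. The only substantive condition is the reflection intertwining $\theta(UV)=(UV)^{*}\theta$.

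For that condition I would compute, using $\theta^{2}=I_{\mathscr{H}}$ together with $\theta U\theta=U^{*}$ and $\theta V\theta=V^{*}$,
\[
\theta(UV)\theta=(\theta U\theta)(\theta V\theta)=U^{*}V^{*}.
\]
This is the crux, and also where I expect the main obstacle. Membership requires $\theta(UV)\theta=(UV)^{*}=V^{*}U^{*}$, so one must reconcile the order reversal built into the adjoint, i.e. one needs $U^{*}V^{*}=V^{*}U^{*}$. This holds precisely when $U$ and $V$ commute, which is exactly the situation underlying Osterwalder--Schrader positivity: as noted in the opening remark of Section~\ref{sec:GR}, the operators implementing the reflection symmetry arise from a single one-parameter (time-translation) family and hence form a commuting set. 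Under this commutativity $U^{*}V^{*}=V^{*}U^{*}$, giving $\theta(UV)\theta=(UV)^{*}$ and therefore $UV\in\mathscr{A}_{+}$. I would flag explicitly that commutativity is essential here: without it the reflection condition for the product can fail, so $\mathscr{A}_{+}$ need not be a semigroup in full generality, and the statement should be read in the commuting (abelian) setting natural to the OS framework.

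For the multiplicativity $(UV)^{\sim}=\widetilde{U}\widetilde{V}$ I would first check that $(UV)^{\sim}$ is well defined on the dense subspace $\{q(h_{+})\mathrel{;}h_{+}\in\mathscr{H}_{+}\}\subseteq\mathscr{K}$, by chaining the well-definedness of the individual factors from \lemref{UK}: if $q(h_{+})=0$ then $q(Vh_{+})=0$ (since $\widetilde{V}$ is well defined), and then $q(UVh_{+})=q(U(Vh_{+}))=0$ (since $\widetilde{U}$ is well defined), so $q(h_{+})\mapsto q(UVh_{+})$ is unambiguous. On this dense domain one has directly
\[
(UV)^{\sim}q(h_{+})=q(UVh_{+})=\widetilde{U}\,q(Vh_{+})=\widetilde{U}\widetilde{V}\,q(h_{+}).
\]
Since $\widetilde{U}$ and $\widetilde{V}$ are contractions on $\mathscr{K}$ by \lemref{UK}, the composite $\widetilde{U}\widetilde{V}$ is a contraction, so both sides extend by continuity to all of $\mathscr{K}$ and the identity holds globally. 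I would emphasize that this last step is the \emph{soft} part: the multiplicativity identity is forced by the defining formula $\widetilde{U}q(h_{+})=q(Uh_{+})$ and does not itself require the reflection condition; that condition enters only to guarantee, via \lemref{UK}, that $(UV)^{\sim}$ is again selfadjoint, i.e. that $UV$ genuinely lies in $\mathscr{A}_{+}$.
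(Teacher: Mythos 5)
Your analysis is essentially correct, and you have in fact put your finger on a real defect that the paper glosses over: its entire proof of this lemma is the single line ``Immediate from \lemref{UK},'' which justifies the multiplicativity $(UV)^{\sim}=\widetilde{U}\widetilde{V}$ (your ``soft'' part, which you handle exactly as one should, via well-definedness on the dense range of $q$ and extension by continuity) but does not address closure of $\mathscr{A}_{+}$ under products at all. Your computation is the decisive one: from $\theta U=U^{*}\theta$ and $\theta V=V^{*}\theta$ one gets $\theta UV=U^{*}V^{*}\theta$, whereas membership of $UV$ requires $\theta UV=(UV)^{*}\theta=V^{*}U^{*}\theta$; since $\theta$ is invertible this forces $U^{*}V^{*}=V^{*}U^{*}$, i.e.\ $UV=VU$. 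So as literally stated, with $\mathscr{A}_{+}$ consisting of arbitrary bounded operators, the closure claim is false, and your restriction to a commuting family is the correct repair. It is consistent with the paper's intended scope (the remark in \secref{GR} about $G=\mathbb{Z}$ or $\mathbb{R}$ and one-parameter groups), and there is a second, independent confirmation that commutativity cannot be avoided: by \lemref{UK} each of $\widetilde{U}$, $\widetilde{V}$, and (if $UV\in\mathscr{A}_{+}$) $(UV)^{\sim}=\widetilde{U}\widetilde{V}$ would be selfadjoint on $\mathscr{K}$, and a product of two selfadjoint operators is selfadjoint only when they commute.

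Two minor points. First, your appeal to \lemref{UK} for well-definedness of $\widetilde{V}$ deserves one more sentence: the invariance of $\mathscr{N}=\ker q$ under $V$ follows from the contractivity estimate in that lemma (which uses the reflection identity for $V$), so the reflection condition is already needed for the individual factors, not only for selfadjointness of the product. Second, when you say commutativity ``holds precisely when $U$ and $V$ commute,'' it is worth noting the adjoint step explicitly: $U^{*}V^{*}=V^{*}U^{*}$ is equivalent to $VU=UV$. With the commutativity hypothesis made explicit, your argument is complete and, unlike the paper's, actually proves the closure assertion.
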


\begin{proof}
Immediate from \lemref{UK}.
\end{proof}
\begin{lem}
\label{lem:EP}Let $\mathscr{H}$ be a fixed Hilbert space with subspaces
$\mathscr{H}_{\pm}$ and $\mathscr{H}_{0}$. Let $E_{\pm}$ and $E_{0}$
denote the respective projections. Let $\theta$ be a reflection,
i.e., $\theta^{2}=I_{\mathscr{H}}$, $\theta^{*}=\theta$. Assume
\begin{align}
E_{-}\theta E_{+} & =\theta E_{+};\nonumber \\
E_{+}\theta E_{-} & =\theta E_{-};\;\text{and}\label{eq:p1}\\
\theta E_{0} & =E_{0}.\nonumber 
\end{align}
\begin{enumerate}
\item \label{enu:EP1}Suppose $\theta:\mathscr{H}_{+}\rightarrow\mathscr{H}_{-}$
is onto. Then we have the following equivalence 
\begin{equation}
E_{+}\theta E_{+}\geq0\Longleftrightarrow E_{-}\theta E_{-}\geq0.\label{eq:p2}
\end{equation}
\item \label{enu:EP2}Suppose (\ref{enu:EP1}) holds, then we get two completions
\begin{equation}
\mathscr{K}_{\pm}:=\left(\mathscr{H}_{\pm}/\left\{ h_{\pm}\mathrel{;}\left\langle h_{\pm},\theta h_{\pm}\right\rangle =0\right\} \right)^{\sim},\label{eq:rp12}
\end{equation}
see (\ref{eq:rp7a}) above. Then $\theta$ induces two isometries
$\widetilde{\theta}:\mathscr{K}_{+}\rightarrow\mathscr{K}_{-}$, $\widetilde{\theta}:\mathscr{K}_{-}\rightarrow\mathscr{K}_{+}$, 
\item In general, the isometries from (\ref{enu:EP2}) are not onto. Indeed,
$\widetilde{\theta}:\mathscr{K}_{+}\rightarrow\mathscr{K}_{-}$ is
onto iff $\mathscr{H}_{-}\ominus\theta\mathscr{H}_{+}=0$; and $\widetilde{\theta}:\mathscr{K}_{-}\rightarrow\mathscr{K}_{+}$
is onto iff $\mathscr{H}_{+}\ominus\theta\mathscr{H}_{-}=0$. 
\end{enumerate}
\end{lem}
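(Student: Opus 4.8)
The plan is to first distill the hypotheses (\ref{eq:p1}) into a single intertwining relation and then run all three parts off one elementary identity. Reading the first two lines of (\ref{eq:p1}) as $E_{-}\theta E_{+}=\theta E_{+}$ and $E_{+}\theta E_{-}=\theta E_{-}$, they say exactly that $\theta\mathscr{H}_{+}\subseteq\mathscr{H}_{-}$ and $\theta\mathscr{H}_{-}\subseteq\mathscr{H}_{+}$; together with $\theta^{2}=I_{\mathscr{H}}$ this upgrades to the intertwining $\theta E_{+}=E_{-}\theta$, so $\theta$ restricts to a bijective isometry interchanging $\mathscr{H}_{+}$ and $\mathscr{H}_{-}$. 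The one computation I will use repeatedly is that, for $h_{+}\in\mathscr{H}_{+}$ and $h_{-}:=\theta h_{+}\in\mathscr{H}_{-}$, the relations $\theta^{*}=\theta$ and $\theta^{2}=I_{\mathscr{H}}$ give
\[
\left\langle h_{-},\theta h_{-}\right\rangle =\left\langle \theta h_{+},h_{+}\right\rangle =\left\langle h_{+},\theta h_{+}\right\rangle ,
\]
a real number; thus the $\theta$-form is carried unchanged from $\mathscr{H}_{+}$ to $\mathscr{H}_{-}$ under $\theta$.

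For the first assertion, since $\theta$ maps $\mathscr{H}_{+}$ onto $\mathscr{H}_{-}$, the displayed identity shows that the two value sets $\{\langle h_{+},\theta h_{+}\rangle\}$ and $\{\langle h_{-},\theta h_{-}\rangle\}$ coincide. Hence one lies in $[0,\infty)$ iff the other does, which by \defref{OS} is precisely $E_{+}\theta E_{+}\ge0\Longleftrightarrow E_{-}\theta E_{-}\ge0$.

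For the second assertion, I would define $\widetilde{\theta}$ on the dense subspaces of the completions (\ref{eq:rp12}) by $\widetilde{\theta}\,q_{+}(h_{+}):=q_{-}(\theta h_{+})$ and, symmetrically, $q_{-}(h_{-})\mapsto q_{+}(\theta h_{-})$, mimicking the construction of $\widetilde{U}$ in \lemref{UK}. Well-definedness is the Schwarz argument of \lemref{TB}: if $\langle h_{+},\theta h_{+}\rangle=0$ then the displayed identity forces $\langle\theta h_{+},\theta(\theta h_{+})\rangle=0$, so the null space $\mathscr{N}_{+}$ is carried into $\mathscr{N}_{-}$ and the map passes to the quotient; the same identity yields $\|\widetilde{\theta}\,q_{+}(h_{+})\|_{\mathscr{K}_{-}}^{2}=\langle h_{+},\theta h_{+}\rangle=\|q_{+}(h_{+})\|_{\mathscr{K}_{+}}^{2}$, so $\widetilde{\theta}$ is isometric on a dense subspace and extends by continuity to an isometry $\mathscr{K}_{+}\to\mathscr{K}_{-}$, and likewise $\mathscr{K}_{-}\to\mathscr{K}_{+}$.

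For the last assertion, the key point is that an isometry has closed range, so $\widetilde{\theta}:\mathscr{K}_{+}\to\mathscr{K}_{-}$ is onto iff $q_{-}(\theta\mathscr{H}_{+})$ is dense in $\mathscr{K}_{-}$. Since $q_{-}:\mathscr{H}_{-}\to\mathscr{K}_{-}$ is contractive (see (\ref{eq:rp7a})) with dense range, vanishing of the defect $\mathscr{H}_{-}\ominus\theta\mathscr{H}_{+}=0$ — i.e. $\theta\mathscr{H}_{+}$ dense, hence equal to $\mathscr{H}_{-}$ — immediately makes $q_{-}(\theta\mathscr{H}_{+})=q_{-}(\mathscr{H}_{-})$ dense, giving surjectivity; the symmetric statement handles $\widetilde{\theta}:\mathscr{K}_{-}\to\mathscr{K}_{+}$ with defect $\mathscr{H}_{+}\ominus\theta\mathscr{H}_{-}$. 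The converse is the delicate half and the main obstacle: from a nonzero $h_{-}\in\mathscr{H}_{-}\ominus\theta\mathscr{H}_{+}$ one must manufacture a nonzero class in $\mathscr{K}_{-}$ orthogonal to $\mathrm{ran}(\widetilde{\theta})$, computing $\mathscr{K}_{-}$-inner products through $\langle q_{-}(a),q_{-}(b)\rangle_{\mathscr{K}_{-}}=\langle a,\theta b\rangle_{\mathscr{H}}$. The care here lies in passing between ordinary $\mathscr{H}$-orthogonality and the renormalized $\mathscr{K}_{-}$-orthogonality, and in checking that the candidate vector is not absorbed into $\mathscr{N}_{-}=\ker q_{-}$; this is exactly where the parity relation $\theta\mathscr{H}_{-}=\mathscr{H}_{+}$ re-enters, identifying $\mathscr{N}_{-}$ with $\mathscr{H}_{-}\cap(\theta\mathscr{H}_{-})^{\perp}$ and closing the equivalence.
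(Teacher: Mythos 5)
Your treatment of parts (i) and (ii) is correct and is essentially the paper's own argument: the single identity $\left\langle \theta h_{+},\theta^{2}h_{+}\right\rangle =\left\langle h_{+},\theta h_{+}\right\rangle $ transports the $\theta$-form from $\mathscr{H}_{+}$ to $\mathscr{H}_{-}$, which yields the equivalence (\ref{eq:p2}) at once and shows that $q_{+}\left(h_{+}\right)\mapsto q_{-}\left(\theta h_{+}\right)$ is well defined on the quotients and isometric. The paper writes exactly this chain of equivalences and this norm computation, and then leaves the rest to the reader.

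The gap is in part (iii), and you have in effect named it without closing it. Two concrete points. First, the converse direction (a nonzero defect vector $h_{-}\in\mathscr{H}_{-}\ominus\theta\mathscr{H}_{+}$ obstructs surjectivity) is only described as ``the delicate half,'' not proved; and your own formula reveals the difficulty you do not resolve: $\left\langle q_{-}\left(h_{-}\right),q_{-}\left(\theta h_{+}\right)\right\rangle _{\mathscr{K}_{-}}=\left\langle h_{-},\theta\theta h_{+}\right\rangle =\left\langle h_{-},h_{+}\right\rangle $, so $\mathscr{H}$-orthogonality of $h_{-}$ to $\theta\mathscr{H}_{+}$ does \emph{not} give $\mathscr{K}_{-}$-orthogonality of $q_{-}\left(h_{-}\right)$ to $\operatorname{ran}\big(\widetilde{\theta}\big)$ --- one would need $h_{-}\perp\mathscr{H}_{+}$ --- and one must separately rule out $q_{-}\left(h_{-}\right)=0$. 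Second, your opening observation undercuts the statement as you read it: from (\ref{eq:p1}) you correctly derive $\theta E_{+}=E_{-}\theta$ (adjoint of the second line combined with the first), and since $\theta$ is an invertible isometry and $\mathscr{H}_{\pm}$ are closed, this forces $\theta\mathscr{H}_{+}=\mathscr{H}_{-}$ exactly. Under the hypotheses as written the defect spaces in (iii) therefore vanish identically and the surjectivity criterion is vacuous; the ``in general not onto'' phenomenon only has content if one of the two inclusions in (\ref{eq:p1}) is dropped. You should either say so and conclude (iii) trivially, or state the weaker hypotheses under which (iii) is substantive and then actually exhibit the nonzero class in $\mathscr{K}_{-}\ominus\operatorname{ran}\big(\widetilde{\theta}\big)$.
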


\begin{proof}
The key step in the proof of the lemma is (\ref{eq:p2}). Indeed we
have the following: 
\begin{align*}
E_{+}\theta E_{+} & \geq0;\\
 & \Updownarrow\\
\left\langle h_{+},\theta h_{+}\right\rangle  & \geq0,\;\forall h_{+}\in\mathscr{H}_{+};\\
 & \Updownarrow\\
\left\langle \theta h_{+},\theta^{2}h_{+}\right\rangle  & \geq0,\;\forall h_{+}\in\mathscr{H}_{+};\\
 & \Updownarrow\\
\left\langle h_{-},\theta h_{-}\right\rangle  & \geq0,\;\forall h_{-}=\theta\left(h_{+}\right)\in\mathscr{H}_{-},
\end{align*}
where we used assumption (\ref{eq:p1}) above. 

Moreover, for all $h_{+}\in\mathscr{H}_{+}$, we have:
\begin{align*}
\left\Vert \text{class}\left(\theta h_{+}\right)\right\Vert _{\mathscr{K}_{-}}^{2} & =\left\langle \theta h_{+},\theta\theta h_{+}\right\rangle \\
 & =\left\langle h_{+},\theta h_{+}\right\rangle =\left\Vert \text{class}\left(h_{+}\right)\right\Vert _{\mathscr{H}_{+}}^{2}.
\end{align*}
The remaining part of the proof is left to the reader.
\end{proof}
We now turn to a closer examination of the unitary reflection operator
$U$ from (\ref{eq:rp1})-(\ref{eq:rp3}). Given $\theta$ as in (\ref{eq:rp1}),
i.e., $\theta=\theta^{*}$, $\theta^{2}=I_{\mathscr{H}}$; we assume
that $\mathscr{H}_{\pm}$ are two closed subspaces in $\mathscr{H}$
such that $\theta\mathscr{H}_{+}\subset\mathscr{H}_{-}$; or, equivalently,
$E_{-}\theta E_{+}=\theta E_{+}$, where $E_{\pm}$ denote the respective
projection for the corresponding subspaces $\mathscr{H}_{\pm}$; i.e.,
\begin{equation}
\mathscr{H}_{\pm}=\left\{ h_{\pm}\in\mathscr{H}\mathrel{;}E_{\pm}h_{\pm}=h_{\pm}\right\} .\label{eq:re1}
\end{equation}
Finally, we shall assume that the O.S.-positivity condition $E_{+}\theta E_{+}\geq0$
holds; and so we are in a position to apply \lemref{EP1} and \corref{EP1}
above.

A given unitary operator $U$ in $\mathscr{H}$ is said to be a \emph{reflection-symmetry}
iff (Def.) 
\begin{align}
\theta U\theta & =U^{*};\;\text{and}\label{eq:re2}\\
U\mathscr{H}_{+} & \subseteq\mathscr{H}_{+}\left(\text{equivalently, }E_{+}UE_{+}=UE_{+}.\right)\label{eq:re3}
\end{align}
\begin{thm}
Let $\mathscr{H}$, $\mathscr{H}_{\pm}$, $\theta$, and $U$ be as
above, i.e., we are assuming O.S.-positivity; and further that $U$
satisfies (\ref{eq:re2})-(\ref{eq:re3}). Let $P$ be the projection
onto $\left\{ h\in\mathscr{H}\mathrel{;}\theta h=h\right\} $, i.e.,
we have $\theta=2P-I_{\mathscr{H}}$.
\begin{enumerate}
\item \label{enu:re1}Then 
\begin{equation}
PUE_{+}=PU^{*}\theta E_{+}.
\end{equation}
\item \label{enu:re2}If $C:P\mathscr{H}\longrightarrow P^{\perp}\mathscr{H}$
denotes the contraction from \lemref{EP1} and \corref{EP1}, then
there is a unique operator $U_{P}:P\mathscr{H}\longrightarrow P\mathscr{H}$
such that $U_{P}=PUP$; and, if $h_{+}=x+Cx$, $x\in P\mathscr{H}$,
then 
\begin{equation}
\left\Vert \widetilde{U}q\left(h_{+}\right)\right\Vert _{\mathscr{K}}^{2}=\left\Vert U_{P}x\right\Vert _{\mathscr{H}}^{2}-\left\Vert CU_{P}x\right\Vert _{\mathscr{H}}^{2}.\label{eq:re5}
\end{equation}
\item \label{enu:re3}In particular, since  $\widetilde{U}$ is contractive
by \lemref{UK}, we have 
\[
\left\Vert U_{P}x\right\Vert _{\mathscr{H}}^{2}-\left\Vert CU_{P}x\right\Vert _{\mathscr{H}}^{2}\leq\left\Vert x\right\Vert _{\mathscr{H}}^{2}-\left\Vert Cx\right\Vert _{\mathscr{H}}^{2},\;\forall x\in P\mathscr{H}.
\]
 
\end{enumerate}
\end{thm}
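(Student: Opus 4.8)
The plan is to handle the three parts in sequence, deriving everything from the graph description of $\mathscr{H}_{+}$ in \lemref{EP1}--\corref{EP1}, the norm identity of \lemref{CP}, and the contractivity of $\widetilde{U}$ from \lemref{UK}. Part (i) is purely algebraic. Since $\theta=2P-I_{\mathscr{H}}$, one has $P\theta=P(2P-I_{\mathscr{H}})=P$, and the reflection symmetry (\ref{eq:re2}) gives $U^{*}=\theta U\theta$. Substituting and using $\theta^{2}=I_{\mathscr{H}}$ together with $P\theta=P$, I would compute $PU^{*}\theta E_{+}=P\theta U\theta\,\theta E_{+}=P\theta U E_{+}=PUE_{+}$. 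I expect no obstacle here.

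For part (ii), the starting point is the invariance (\ref{eq:re3}): $Uh_{+}\in\mathscr{H}_{+}$ for every $h_{+}\in\mathscr{H}_{+}$. Writing $\mathscr{H}_{+}=\{x+Cx\mathrel{;}x\in P\mathscr{H}\}$ as the graph of $C$ (\lemref{EP1}, \corref{EP1}), the map $h_{+}\mapsto Ph_{+}$ is a bijection of $\mathscr{H}_{+}$ onto $P\mathscr{H}$, so $Uh_{+}$ is again of the form $x'+Cx'$ with $x':=P(Uh_{+})\in P\mathscr{H}$ uniquely determined. This defines an operator $U_{P}\colon x\mapsto x'=P\,U(x+Cx)$ on $P\mathscr{H}$; under the identification $P\mathscr{H}\ni x\leftrightarrow x+Cx\in\mathscr{H}_{+}$ it is exactly the compression of $U$ to $\mathscr{H}_{+}$ (written $U_{P}=PUP$ in the statement), and uniqueness is forced by the bijectivity of $P|_{\mathscr{H}_{+}}$. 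The norm formula then follows by applying \lemref{CP} to the vector $Uh_{+}\in\mathscr{H}_{+}$: since $\widetilde{U}q(h_{+})=q(Uh_{+})$, I get $\|\widetilde{U}q(h_{+})\|_{\mathscr{K}}^{2}=\langle Uh_{+},\theta Uh_{+}\rangle_{\mathscr{H}}=\|(I_{\mathscr{H}}-C^{*}C)^{1/2}x'\|_{\mathscr{H}}^{2}=\|U_{P}x\|_{\mathscr{H}}^{2}-\|CU_{P}x\|_{\mathscr{H}}^{2}$, which is (\ref{eq:re5}). Part (i) enters as a consistency identity, since it rewrites $x'=P(Uh_{+})=PU^{*}\theta h_{+}$.

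For part (iii), I would simply feed part (ii) into \lemref{UK}. That lemma gives $\|\widetilde{U}q(h_{+})\|_{\mathscr{K}}^{2}\le\|q(h_{+})\|_{\mathscr{K}}^{2}$, while \lemref{CP} identifies the right-hand side as $\langle h_{+},\theta h_{+}\rangle_{\mathscr{H}}=\|x\|_{\mathscr{H}}^{2}-\|Cx\|_{\mathscr{H}}^{2}$ for $h_{+}=x+Cx$. Combining this with (\ref{eq:re5}) yields $\|U_{P}x\|_{\mathscr{H}}^{2}-\|CU_{P}x\|_{\mathscr{H}}^{2}\le\|x\|_{\mathscr{H}}^{2}-\|Cx\|_{\mathscr{H}}^{2}$ for all $x\in P\mathscr{H}$, as claimed.

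The step I expect to be the crux is the precise identification of $U_{P}$ in part (ii): one must verify that $Uh_{+}$ lands back on the graph of the \emph{same} contraction $C$, so that its $P^{\perp}$-component is genuinely $Cx'$, which is where invariance (\ref{eq:re3}) is essential, and that the quantity entering (\ref{eq:re5}) is $x'=P(Uh_{+})$ expressed through the graph coordinate. Care is needed because the cross term $PU(Cx)$ need not vanish termwise; it is absorbed into $U_{P}$ via the coordinate $x=Ph_{+}$, and it is this bookkeeping, rather than any hard analysis, that carries the argument.
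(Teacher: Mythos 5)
Your argument is correct and follows essentially the same route as the paper's (much terser) proof: (i) from $P\theta=P$ and $\theta U\theta=U^{*}$, (ii) from the graph description of $\mathscr{H}_{+}$ together with \lemref{CP} applied to $Uh_{+}\in\mathscr{H}_{+}$, and (iii) from the contractivity of $\widetilde{U}$ in \lemref{UK}. Your closing remark is the one substantive point the paper glosses over: for (\ref{eq:re5}) to follow from \lemref{CP}, the operator $U_{P}$ must be the graph-coordinate compression $x\mapsto PU(x+Cx)=PU(I_{\mathscr{H}}+C)x$, which differs from the literal $PUx$ by the cross term $PUCx$; your reading is the one under which the identity actually holds, and it is what the paper's computation implicitly uses.
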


\begin{proof}
Note that (\ref{enu:re1}) is immediate from (\ref{eq:rp2}) and \corref{EP1}. 

The first half is immediate from definition of the contraction $C$
from \lemref{EP1}. For $h_{+}=x+Cx$, $x\in P\mathscr{H}$, we have
\[
\left\langle h_{+},\theta h_{+}\right\rangle _{\mathscr{H}}=\left\Vert q\left(h_{+}\right)\right\Vert _{\mathscr{K}}^{2}=\left\Vert x\right\Vert _{\mathscr{H}}^{2}-\left\Vert Cx\right\Vert _{\mathscr{H}}^{2},
\]
and 
\[
\left\Vert \widetilde{U}\left(q\left(h_{+}\right)\right)\right\Vert _{\mathscr{K}}^{2}=\left\Vert q\left(Uh_{+}\right)\right\Vert _{\mathscr{K}}^{2}=\left\Vert U_{P}x\right\Vert _{\mathscr{H}}^{2}-\left\Vert CU_{P}x\right\Vert _{\mathscr{H}}^{2};
\]
and eq. (\ref{eq:re5}) in (\ref{enu:re2}) follows. 

Now (\ref{enu:re3}) is immediate from (\ref{enu:re1})-(\ref{enu:re2})
combined with the fact that $\widetilde{U}$ is contractive in $\mathscr{K}$;
see \lemref{UK}.
\end{proof}
\begin{cor}
\label{cor:us1}Let $\mathscr{H}$, $\mathscr{H}_{\pm}$, $\mathscr{H}_{0}$,
$E_{\pm}$, $E_{0}$, $\theta$, be as in the statement of \lemref{EP}.
Let $\mathscr{K}_{\pm}$ be the corresponding induced Hilbert spaces,
see (\ref{eq:rp12}). Now set 
\begin{equation}
\mathscr{H}_{\pm}^{ex}=\text{closed span of }\left\{ h_{0}+h_{\pm}\mathrel{;}h_{0}\in\mathscr{H}_{0},\:h_{\pm}\in\mathscr{H}_{\pm}\right\} ,\label{eq:us1}
\end{equation}
and let $E_{\pm}^{ex}$ denote the corresponding projections, i.e.,
$E_{\pm}^{ex}:=E_{0}\vee E_{\pm}$. Then the following analogies of
(\ref{eq:p1}) hold: 
\begin{align}
E_{-}^{ex}\theta E_{+}^{ex} & =\theta E_{+}^{ex};\;\text{and}\label{eq:us2}\\
E_{+}^{ex}\theta E_{-}^{ex} & =\theta E_{-}^{ex}.\label{eq:us3}
\end{align}
Moreover, we have the implication
\begin{equation}
E_{+}\theta E_{+}\geq0\Longrightarrow E_{+}^{ex}\theta E_{+}^{ex}\geq0,\label{eq:us3a}
\end{equation}
if and only if 
\begin{equation}
\left|\left\langle h_{+},h_{0}\right\rangle \right|^{2}\leq\left\langle h_{+},\theta h_{+}\right\rangle \left\Vert h_{0}\right\Vert ^{2},\;\forall h_{+}\in\mathscr{H}_{+},\forall h_{0}\in\mathscr{H}_{0}.\label{eq:us3b}
\end{equation}
\end{cor}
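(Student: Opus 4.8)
The plan is to treat the three assertions in order. The two intertwining identities (\ref{eq:us2})--(\ref{eq:us3}) come from a direct computation on generators. Since $E_\pm^{ex}$ is by definition the projection onto $\mathscr{H}_\pm^{ex}=\overline{\mathscr{H}_0+\mathscr{H}_\pm}$, the identity $E_-^{ex}\theta E_+^{ex}=\theta E_+^{ex}$ is equivalent to the inclusion $\theta\,\mathscr{H}_+^{ex}\subseteq\mathscr{H}_-^{ex}$. On a generating vector $h_0+h_+$ (with $h_0\in\mathscr{H}_0$, $h_+\in\mathscr{H}_+$) I compute $\theta(h_0+h_+)=\theta h_0+\theta h_+=h_0+\theta h_+$, using $\theta E_0=E_0$ from (\ref{eq:p1}); and $\theta h_+\in\mathscr{H}_-$ by the first line of (\ref{eq:p1}). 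Hence $\theta(h_0+h_+)\in\mathscr{H}_0+\mathscr{H}_-\subseteq\mathscr{H}_-^{ex}$, and since $\theta$ is bounded and $\mathscr{H}_-^{ex}$ is closed, $\theta\,\mathscr{H}_+^{ex}\subseteq\mathscr{H}_-^{ex}$ follows by continuity. Identity (\ref{eq:us3}) is the mirror statement, proved the same way from $\theta\mathscr{H}_-\subseteq\mathscr{H}_+$.

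For the positivity equivalence I would first note that $\xi\mapsto\langle\xi,\theta\xi\rangle$ is a continuous quadratic form, so the set on which it is nonnegative is closed; hence $E_+^{ex}\theta E_+^{ex}\geq0$ holds iff $\langle\xi,\theta\xi\rangle\geq0$ for every $\xi$ in the dense algebraic sum $\mathscr{H}_0+\mathscr{H}_+$. Writing $\xi=h_0+h_+$, expanding, and using $\theta h_0=h_0$, $\theta^*=\theta$, and $\langle h_0,h_+\rangle=\langle h_0,E_0h_+\rangle$, I obtain
\[
\langle h_0+h_+,\theta(h_0+h_+)\rangle=\|h_0\|^2+2\,\mathrm{Re}\langle h_0,E_0h_+\rangle+\langle h_+,\theta h_+\rangle .
\]
Completing the square in $h_0$, and noting that $E_0h_+\in\mathscr{H}_0$ so the minimizing choice $h_0=-E_0h_+$ is admissible, the infimum over $h_0\in\mathscr{H}_0$ equals $\langle h_+,\theta h_+\rangle-\|E_0h_+\|^2$. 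Thus $E_+^{ex}\theta E_+^{ex}\geq0$ is equivalent to $\|E_0h_+\|^2\leq\langle h_+,\theta h_+\rangle$ for all $h_+\in\mathscr{H}_+$.

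It remains to match this with (\ref{eq:us3b}). Using $\langle h_+,h_0\rangle=\langle E_0h_+,h_0\rangle$ and Cauchy--Schwarz, the supremum of $|\langle h_+,h_0\rangle|^2/\|h_0\|^2$ over $0\neq h_0\in\mathscr{H}_0$ equals $\|E_0h_+\|^2$, attained at $h_0=E_0h_+$. Dividing (\ref{eq:us3b}) by $\|h_0\|^2$ and taking this supremum shows (\ref{eq:us3b}) is precisely $\|E_0h_+\|^2\leq\langle h_+,\theta h_+\rangle$ for all $h_+$, i.e.\ the condition just derived, so $E_+^{ex}\theta E_+^{ex}\geq0\Leftrightarrow(\ref{eq:us3b})$. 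Finally I would observe that (\ref{eq:us3b}) already forces $\langle h_+,\theta h_+\rangle\geq0$ (take $h_0=E_0h_+$, or any $h_0\neq0$ when $E_0h_+=0$), hence subsumes $E_+\theta E_+\geq0$; so under the standing O.S.-positivity hypothesis the displayed equivalence is exactly the asserted statement that the implication $E_+\theta E_+\geq0\Rightarrow E_+^{ex}\theta E_+^{ex}\geq0$ holds iff (\ref{eq:us3b}).

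The only genuinely delicate point is the optimization step: I must be sure the minimizer $-E_0h_+$ lies in $\mathscr{H}_0$ (it does, since $E_0$ projects into $\mathscr{H}_0$) so that completing the square computes a true infimum over the allowed $h_0$, and I must pass from the dense algebraic sum $\mathscr{H}_0+\mathscr{H}_+$ to its closure $\mathscr{H}_+^{ex}$ by continuity rather than assuming any direct-sum decomposition (the two subspaces may overlap). Everything else is a routine expansion together with a single use of Cauchy--Schwarz.
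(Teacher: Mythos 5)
Your proof is correct and follows essentially the same route as the paper's: the intertwining identities come from $\theta(h_0+h_+)=h_0+\theta h_+$ on generating vectors, and the positivity equivalence comes from expanding $\left\langle h_0+h_+,\theta(h_0+h_+)\right\rangle=\left\Vert h_0\right\Vert^2+2\,\mathrm{Re}\left\langle h_0,h_+\right\rangle+\left\langle h_+,\theta h_+\right\rangle$ using $\theta h_0=h_0$. In fact your completing-the-square step (infimum over $h_0$ equals $\left\langle h_+,\theta h_+\right\rangle-\left\Vert E_0h_+\right\Vert^2$, matched to (\ref{eq:us3b}) via Cauchy--Schwarz) supplies the optimization argument that the paper compresses into ``the result follows,'' so your write-up is the more complete of the two.
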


\begin{proof}
By \lemref{EP}, it is easy to prove one of the two formula (\ref{eq:us2})-(\ref{eq:us3}).

In detail, we must show that if $h_{0}\in\mathscr{H}_{0}$, $h_{+}\in\mathscr{H}_{+}$,
then $\theta\left(h_{0}+h_{+}\right)\in\mathscr{H}_{-}^{ex}$; see
(\ref{eq:us1}). But this is clear since 
\begin{equation}
\theta\left(h_{0}+h_{+}\right)=\theta h_{0}+\theta h_{+}=h_{0}+\theta h_{+},\label{eq:us4}
\end{equation}
and $\theta h_{+}\in\mathscr{H}_{-}$ by (\ref{eq:p1}). We also used
$\theta h_{0}=h_{0}$ which is (ii) in \lemref{EP0}.

The second conclusion follows from this, since if $\left\langle h_{+},\theta h_{+}\right\rangle \geq0$,
$\forall h_{+}\in\mathscr{H}_{+}$; then 
\begin{eqnarray*}
\left\langle h_{+}+h_{0},\theta\left(h_{+}+h_{0}\right)\right\rangle  & \underset{\text{by }\left(\ref{eq:us4}\right)}{=} & \left\langle h_{+}+h_{0},\theta h_{+}+h_{0}\right\rangle \\
 & = & \left\langle h_{+},\theta h_{+}\right\rangle +\left\langle h_{+},h_{0}\right\rangle +\left\langle h_{0},\theta h_{+}\right\rangle +\left\Vert h_{0}\right\Vert ^{2}.
\end{eqnarray*}
Now use $\left\langle h_{0},\theta h_{+}\right\rangle =\left\langle \theta h_{0},h_{+}\right\rangle =\left\langle h_{0},h_{+}\right\rangle $,
and the result follows. 
\end{proof}
\begin{rem}
In the statement of \corref{us1}, we impose the technical assumption
(\ref{eq:us3b}). The following example shows that this restricting
condition (\ref{eq:us3b}) does not always hold; i.e., that \corref{us1}
cannot be strengthened.
\end{rem}

\begin{example}[Also see \remref{3d}]
 Let $\mathscr{H}=\mathbb{C}^{3}$ with standard orthonormal basis
$\left\{ e_{j}\right\} _{j=1}^{3}$. Consider the reflection
\[
\theta=\begin{pmatrix}1 & 0 & 0\\
0 & 1 & 0\\
0 & 0 & -1
\end{pmatrix},
\]
and set 
\begin{align*}
\mathscr{H}_{+} & =span\left\{ e_{1}+\frac{1}{2}e_{3}\right\} ,\\
\mathscr{H}_{-} & =span\left\{ e_{1}-\frac{1}{2}e_{3}\right\} ,\\
\mathscr{H}_{0} & =span\left\{ e_{1}\right\} .
\end{align*}
For $h_{+}:=e_{1}+\frac{1}{2}e_{3}$, and $h_{0}:=e_{1}$, we get
$\left|\left\langle h_{+},h_{0}\right\rangle \right|^{2}=1$, but
\[
\left\langle h_{+},\theta h_{+}\right\rangle \left\Vert h_{0}\right\Vert ^{2}=\left\langle e_{1}+\frac{1}{2}e_{3},e_{1}-\frac{1}{2}e_{3}\right\rangle \left\Vert e_{1}\right\Vert ^{2}=\frac{3}{4}.
\]
Hence condition (\ref{eq:us3b}) does not hold. 

Note that $h_{+}-h_{0}\in\mathscr{H}_{+}^{ex}$, and 
\[
\left\langle h_{+}-h_{0},\theta\left(h_{+}-h_{0}\right)\right\rangle =\left\langle \frac{1}{2}e_{3},-\frac{1}{2}e_{3}\right\rangle =-\frac{1}{4}<0;
\]
i.e., the positivity condition $E_{+}^{ex}\theta E_{+}^{ex}\geq0$
in (\ref{eq:us3a}) is not satisfied. 
\end{example}

\begin{cor}
Let $\mathscr{H}$, $\theta$, and $\mathscr{H}_{0}$, $\mathscr{H}_{\pm}$
be as in \corref{us1}, assume (\ref{eq:us3b}), and let $\mathscr{K}_{\pm}^{ex}$
be the corresponding induced Hilbert spaces; see (\ref{eq:us1}) applied
to $\mathscr{H}_{\pm}^{ex}$. Then the two quotient mappings $\mathscr{H}_{0}\rightarrow\mathscr{K}_{\pm}^{ex}$
are isometric.
\end{cor}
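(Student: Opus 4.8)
The plan is to exploit the single structural fact that $\theta$ restricts to the identity on $\mathscr{H}_0$ --- this is precisely the hypothesis $\theta E_0 = E_0$ appearing in \lemref{EP}, equivalently statement (ii) of \lemref{EP0}. That fact forces the renormalized inner product to coincide with the ambient $\mathscr{H}$-inner product on $\mathscr{H}_0$, and isometry then drops out with essentially no further work.

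First I would record the inclusion $\mathscr{H}_0 \subseteq \mathscr{H}_\pm^{ex}$: by the defining formula (\ref{eq:us1}), every $h_0 \in \mathscr{H}_0$ equals $h_0 + h_\pm$ with $h_\pm = 0$, so $\mathscr{H}_0$ sits inside each $\mathscr{H}_\pm^{ex}$ and the quotient maps $q_\pm^{ex}:\mathscr{H}_\pm^{ex}\rightarrow\mathscr{K}_\pm^{ex}$ may legitimately be restricted to it. Because we are assuming (\ref{eq:us3b}), \corref{us1} supplies $E_\pm^{ex}\theta E_\pm^{ex}\geq 0$, so the completions $\mathscr{K}_\pm^{ex}$ in the sense of (\ref{eq:rp7a}) genuinely exist, each carrying the renormalized norm $\left\Vert q_\pm^{ex}(h)\right\Vert^2 = \left\langle h,\theta h\right\rangle$.

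The next step is to evaluate this norm on $\mathscr{H}_0$. Using $\theta h_0 = h_0$ I would obtain
\[
\left\Vert q_\pm^{ex}(h_0)\right\Vert_{\mathscr{K}_\pm^{ex}}^2 = \left\langle h_0,\theta h_0\right\rangle = \left\langle h_0,h_0\right\rangle = \left\Vert h_0\right\Vert_{\mathscr{H}}^2 ,
\]
so each quotient map transports the original $\mathscr{H}$-norm on $\mathscr{H}_0$ to the $\mathscr{K}_\pm^{ex}$-norm without distortion. In particular the null spaces $\left\{h\mathrel{;}\left\langle h,\theta h\right\rangle = 0\right\}$ meet $\mathscr{H}_0$ only in $0$, so the restricted maps are injective as well as norm-preserving, i.e.\ isometric.

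I do not anticipate any genuine obstacle here: the entire content is that $\theta$ fixes $\mathscr{H}_0$ pointwise, which collapses the renormalization to the identity on $\mathscr{H}_0$. The only points meriting a line of care are confirming that $\mathscr{H}_0 \subseteq \mathscr{H}_\pm^{ex}$ and that the sesquilinear form used to build $\mathscr{K}_\pm^{ex}$ is indeed $\left\langle\cdot,\theta\cdot\right\rangle$; both are immediate from the definitions, and the hypothesis (\ref{eq:us3b}) enters only to guarantee that $\mathscr{K}_\pm^{ex}$ is a bona fide Hilbert space in the first place.
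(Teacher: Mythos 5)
Your proof is correct and is precisely the argument the authors intend: the paper's own proof is just the word ``Immediate,'' and the content you supply --- that $\theta E_{0}=E_{0}$ forces $\left\langle h_{0},\theta h_{0}\right\rangle =\left\Vert h_{0}\right\Vert ^{2}$, so the renormalized norm agrees with the ambient norm on $\mathscr{H}_{0}\subseteq\mathscr{H}_{\pm}^{ex}$ --- is exactly the one-line computation being elided. Your side remarks (that (\ref{eq:us3b}) is what makes $\mathscr{K}_{\pm}^{ex}$ well defined via \corref{us1}, and that isometry already gives injectivity modulo the null space) are accurate and complete the picture.
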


\begin{proof}
Immediate. 
\end{proof}

\subsection{Contractive Inclusions}

As sketched in \figref{sd} below, there are three subspaces naturally
associated with the geometry of a given reflection, $\mathscr{H}_{0}$,
$\mathscr{H}_{+}$, and $\mathscr{H}_{-}$. The last two of these
are determined naturally and directly from the given reflection $\theta$.
The role of the subspace $\mathscr{H}_{0}$ is more subtle, and its
role is more crucial in connection with the Markov property (see \defref{E}
below). Below we specify the possibilities for $\mathscr{H}_{0}$;
see especially the corollary to follow.
\begin{cor}
\label{cor:us2}Let $\mathscr{H}_{+}$, $\mathscr{H}_{0}$, and $\theta$
be as in \corref{us1}, and assume $E_{+}\theta E_{+}\geq0$ (i.e.,
O.S.-positivity). Then TFAE: 
\begin{enumerate}
\item \label{enu:us2a}$\left|\left\langle h_{+},h_{0}\right\rangle \right|^{2}\leq\left\langle h_{+},\theta h_{+}\right\rangle \left\Vert h_{0}\right\Vert ^{2}$,
$\forall h_{0}\in\mathscr{H}_{0}$, $\forall h_{+}\in\mathscr{H}_{+}$
(see (\ref{eq:us3b}));
\item \label{enu:us2b}$\exists l:\mathscr{H}_{0}\rightarrow\mathscr{K}=\left(\mathscr{H}_{+}/\ker\right)^{\sim}$
which is linear bounded and contractive, i.e., 
\begin{equation}
\left\Vert l\left(h_{0}\right)\right\Vert _{\mathscr{K}}\leq\left\Vert h_{0}\right\Vert \;,\forall h_{0}\in\mathscr{H}_{0},\label{eq:bd1}
\end{equation}
(we say that $\mathscr{H}_{0}$ is contractively contained in $\mathscr{K}$),
and (\ref{eq:bd2}) holds. 
\end{enumerate}
\end{cor}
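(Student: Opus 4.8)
The plan is to read condition (ii) through the Riesz representation theorem: the operator $l$ is nothing but the Riesz representative, in $\mathscr{K}$, of the family of linear functionals $q(h_+)\mapsto\langle h_+,h_0\rangle_{\mathscr{H}}$ indexed by $h_0\in\mathscr{H}_0$, and the relation (\ref{eq:bd2}) should read
\[
\langle q(h_+),l(h_0)\rangle_{\mathscr{K}}=\langle h_+,h_0\rangle_{\mathscr{H}},\quad\forall h_+\in\mathscr{H}_+,\ \forall h_0\in\mathscr{H}_0,
\]
i.e. the Riesz-representative identity. With this reading, condition (i) is exactly the quantitative estimate that makes these functionals both well defined and contractive on $\mathscr{K}$, so the whole equivalence reduces to a translation between a scalar inequality and the existence of a Riesz representative of controlled norm.

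For the direction (i)$\Rightarrow$(ii), I would fix $h_0\in\mathscr{H}_0$ and define $\phi_{h_0}$ on the dense subspace $\left\{ q(h_+)\mathrel{;}h_+\in\mathscr{H}_+\right\} \subset\mathscr{K}$ by $\phi_{h_0}(q(h_+)):=\langle h_+,h_0\rangle_{\mathscr{H}}$. The first task is well-definedness: if $q(h_+)=0$ then $\langle h_+,\theta h_+\rangle=0$, and (i) applied to this very $h_+$ forces $\langle h_+,h_0\rangle=0$, so $\phi_{h_0}$ descends to the quotient $\mathscr{H}_+/\mathscr{N}$. Recalling from (\ref{eq:rp7}) that $\left\Vert q(h_+)\right\Vert_{\mathscr{K}}^2=\langle h_+,\theta h_+\rangle$, condition (i) then gives the contractive bound $\left|\phi_{h_0}(q(h_+))\right|\leq\left\Vert h_0\right\Vert\,\left\Vert q(h_+)\right\Vert_{\mathscr{K}}$. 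Extending $\phi_{h_0}$ by density to all of $\mathscr{K}$ and invoking Riesz produces a unique $l(h_0)\in\mathscr{K}$ with $\left\Vert l(h_0)\right\Vert_{\mathscr{K}}=\left\Vert\phi_{h_0}\right\Vert\leq\left\Vert h_0\right\Vert$ and satisfying the displayed identity; linearity of $h_0\mapsto l(h_0)$ then follows from uniqueness in Riesz, two conjugate-linear steps composing to a linear one.

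The reverse direction (ii)$\Rightarrow$(i) is a one-line Cauchy--Schwarz estimate in $\mathscr{K}$: using the identity (\ref{eq:bd2}) and then the contractivity (\ref{eq:bd1}),
\[
\left|\langle h_+,h_0\rangle_{\mathscr{H}}\right|^2=\left|\langle q(h_+),l(h_0)\rangle_{\mathscr{K}}\right|^2\leq\left\Vert q(h_+)\right\Vert_{\mathscr{K}}^2\,\left\Vert l(h_0)\right\Vert_{\mathscr{K}}^2\leq\langle h_+,\theta h_+\rangle\,\left\Vert h_0\right\Vert^2,
\]
which is precisely (i). The only genuinely delicate point in the whole argument is the well-definedness step in (i)$\Rightarrow$(ii): it is there, and not merely in the boundedness estimate, that condition (i) is used in an essential way, since it is what guarantees that the functional annihilates $\mathscr{N}=\ker(E_+\theta E_+)$ and hence passes to $\mathscr{H}_+/\mathscr{N}$. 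A secondary point to watch is the sesquilinearity bookkeeping, so that $l$ emerges linear rather than conjugate-linear.
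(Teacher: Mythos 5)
Your proposal is correct and follows essentially the same route as the paper: Riesz representation in $\mathscr{K}$ for the direction (i)$\Rightarrow$(ii), and Cauchy--Schwarz in $\mathscr{K}$ combined with $\left\Vert q(h_{+})\right\Vert _{\mathscr{K}}^{2}=\left\langle h_{+},\theta h_{+}\right\rangle $ for the converse. Your explicit treatment of well-definedness on the quotient $\mathscr{H}_{+}/\mathscr{N}$ is a point the paper's proof passes over silently, and it is handled correctly.
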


\begin{proof}
(\ref{enu:us2a})$\Longrightarrow$(\ref{enu:us2b}). Assume (\ref{enu:us2a}),
then for $\forall h_{0}\in\mathscr{H}_{0}$ fixed, the map $h_{+}\longmapsto\left\langle h_{0},h_{+}\right\rangle $
is a bounded linear functional, so by Riesz and (\ref{enu:us2a})
$\exists l:\mathscr{H}_{0}\longrightarrow\mathscr{K}=\mathscr{K}^{*}$
(the Hilbert space $\mathscr{K}$ is selfdual) s.t. 
\begin{equation}
\left\langle h_{0},h_{+}\right\rangle =\left\langle l\left(h_{0}\right),q\left(h_{+}\right)\right\rangle _{\mathscr{K}},\;\forall h_{+}\in\mathscr{H}_{+}.\label{eq:bd2}
\end{equation}
The inner product in $\mathscr{H}$ is denoted without subscript,
but the $\mathscr{K}$-inner product is denoted $\left\langle \cdot,\cdot\right\rangle _{\mathscr{K}}$,
so $\left\langle q\left(l_{+}\right),q\left(h_{+}\right)\right\rangle _{\mathscr{K}}=\left\langle l_{+},\theta h_{+}\right\rangle $,
$\forall l_{+}$, $h_{+}\in\mathscr{H}_{+}$. 

By (\ref{enu:us2a}) and (\ref{eq:bd2}), we get $\left\Vert l\left(h_{0}\right)\right\Vert _{\mathscr{K}}\leq\left\Vert h_{0}\right\Vert $,
$\forall h_{0}\in\mathscr{H}_{0}$, which is the assertion in (\ref{enu:us2b}).

(\ref{enu:us2b})$\Longrightarrow$(\ref{enu:us2a}). Assume (\ref{enu:us2b}),
and compute $\left|\left\langle h_{+},h_{0}\right\rangle \right|^{2}$
in (\ref{enu:us2a}). We have 
\begin{eqnarray*}
\left|\left\langle h_{0},h_{+}\right\rangle \right|^{2} & = & \big|\langle\underset{\in\mathscr{K}}{\underbrace{l\left(h_{0}\right)}},q\left(h_{+}\right)\rangle_{\mathscr{K}}\big|^{2}\\
 & \leq & \left\Vert l\left(h_{0}\right)\right\Vert _{\mathscr{K}}^{2}\left\Vert q\left(h_{+}\right)\right\Vert _{\mathscr{K}}^{2}\\
 & \underset{\text{by }\left(\ref{eq:bd1}\right)}{\leq} & \left\Vert h_{0}\right\Vert ^{2}\left\langle h_{+},\theta h_{+}\right\rangle \;\text{which is }\left(\ref{enu:us2a}\right).
\end{eqnarray*}
\end{proof}
\begin{cor}
\label{cor:EP3}Let $\mathscr{H}$, $\mathscr{H}_{\pm}$, $\mathscr{H}_{0}$,
and $\theta$ be as described above, and let $E_{\pm}$, $E_{0}$
be the corresponding projections. Introduce $E_{\pm}^{ex}$ as in
\corref{us1}. Then the following implication holds: 
\begin{align}
E_{+}E_{0}E_{-} & =E_{+}E_{-}\left(\text{the Markov property}\right)\nonumber \\
 & \Downarrow\label{eq:mk7}\\
E_{+}^{ex}E_{0} & =E_{+}^{ex}E_{-}^{ex}.\nonumber 
\end{align}
\end{cor}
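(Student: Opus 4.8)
The plan is to show that, under the Markov property, both sides of the asserted identity collapse to $E_0$. Throughout I identify a projection with its range and use the order facts from \defref{OP}, together with the adjoint form of the Markov hypothesis: taking $*$ in $E_+E_0E_-=E_+E_-$ gives $E_-E_0E_+=E_-E_+$, which says $E_-E_0b=E_-b$ for every $b\in\mathscr{H}_+$.

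First I would record the trivial half. Since $\mathscr{H}_0\subseteq\mathscr{H}_+^{ex}$ by the definition (\ref{eq:us1}), we have $E_0\leq E_+^{ex}$, hence $E_+^{ex}E_0=E_0$. Likewise $E_0\leq E_-^{ex}$, so $E_-^{ex}E_0=E_0$ and, by taking adjoints, $E_0E_-^{ex}=E_0$. Consequently the claimed identity (\ref{eq:mk7}) is equivalent to the single statement $E_+^{ex}E_-^{ex}=E_0$. Next I would reduce this to the more tractable $E_+^{ex}E_-=E_0E_-$. Decompose $\mathscr{H}=\mathscr{H}_-^{ex}\oplus(\mathscr{H}_-^{ex})^{\perp}$; on $(\mathscr{H}_-^{ex})^{\perp}$ both $E_+^{ex}E_-^{ex}$ and $E_0$ vanish (the latter because $\mathscr{H}_0\subseteq\mathscr{H}_-^{ex}$), while on $\mathscr{H}_-^{ex}=\overline{\mathscr{H}_0+\mathscr{H}_-}$ the bounded operators $E_+^{ex}$ and $E_0$ already agree on $\mathscr{H}_0$ (both act as the identity there). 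By linearity and continuity it therefore suffices to compare them on $\mathscr{H}_-$, i.e. to prove $E_+^{ex}E_-=E_0E_-$, equivalently (adjoint) $E_-E_+^{ex}=E_-E_0$.

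The geometric content of $E_-E_+^{ex}=E_-E_0$ is that $\mathscr{H}_-$ is orthogonal to $\operatorname{Ran}(E_+^{ex}-E_0)=\mathscr{H}_+^{ex}\ominus\mathscr{H}_0$. To prove this I would take any $w\in\mathscr{H}_+^{ex}$ with $E_0w=0$ and show $E_-w=0$. Write $w=\lim_n(a_n+b_n)$ with $a_n\in\mathscr{H}_0$ and $b_n\in\mathscr{H}_+$. The adjoint Markov relation gives $E_-b_n=E_-E_0b_n$, so $E_-w=\lim_n E_-(a_n+E_0b_n)$; since $E_0a_n=a_n$ we also have $E_0w=\lim_n(a_n+E_0b_n)$, whence $E_-w=E_-(E_0w)=E_-\cdot 0=0$, using continuity of $E_0$ and $E_-$. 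This yields $E_-(E_+^{ex}-E_0)=0$, hence the reduced identity, and thus $E_+^{ex}E_-^{ex}=E_0=E_+^{ex}E_0$, which is the desired conclusion.

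I expect the main obstacle to be precisely the passage to the closure. The Markov hypothesis is an algebraic relation among $E_0,E_+,E_-$, but $\mathscr{H}_+^{ex}$ is the \emph{closed} span $\overline{\mathscr{H}_0+\mathscr{H}_+}$, so the statement for a general $w\in\mathscr{H}_+^{ex}$ is not itself algebraic; the Markov relation may only be applied to the approximants $b_n\in\mathscr{H}_+$. The delicate point is to organize the limit so that it re-enters cleanly as $E_-(E_0w)$: the identity $E_0w=\lim_n(a_n+E_0b_n)$ is exactly what makes this work, and boundedness of the three projections is what justifies interchanging $E_-$ with the limit. Notably, this argument uses only the three projections and the Markov property, not the reflection $\theta$ or the relations (\ref{eq:us2})--(\ref{eq:us3}).
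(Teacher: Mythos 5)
Your proof is correct, but it is organized quite differently from the paper's, and it supplies substance that the paper's own argument leaves implicit. The paper's proof is the one-line chain $E_{+}E_{0}E_{-}=E_{+}E_{-}\Rightarrow E_{+}^{ex}E_{0}E_{-}^{ex}=E_{+}^{ex}E_{-}^{ex}\Leftrightarrow E_{+}^{ex}E_{0}=E_{+}^{ex}E_{-}^{ex}$, where the second step uses only $E_{0}\leq E_{-}^{ex}$; the first arrow --- passing from the Markov identity for $(E_{0},E_{\pm})$ to the one for $(E_{0},E_{\pm}^{ex})$ --- is asserted without justification, and since $E_{+}^{ex}E_{0}E_{-}^{ex}=E_{0}$ automatically, that arrow \emph{is} the entire content of the corollary. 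You instead begin by observing that $E_{0}\leq E_{+}^{ex}$ forces $E_{+}^{ex}E_{0}=E_{0}$, so the conclusion is equivalent to the cleaner statement $E_{+}^{ex}E_{-}^{ex}=E_{0}$, and you then prove this by showing that $E_{-}$ annihilates $\mathscr{H}_{+}^{ex}\ominus\mathscr{H}_{0}$: the adjoint Markov relation $E_{-}E_{0}b=E_{-}b$ for $b\in\mathscr{H}_{+}$ is applied to approximants $a_{n}+b_{n}$, and the limit is reassembled as $E_{-}(E_{0}w)=0$. This is where the real work lies, since $\mathscr{H}_{\pm}^{ex}$ are \emph{closed} spans and the Markov hypothesis is only an algebraic relation among the original three projections; your careful handling of the closure is exactly what the paper's first implication glosses over. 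Your route also yields the slightly sharper conclusion $E_{+}^{ex}E_{-}^{ex}=E_{0}$ explicitly, and makes visible that neither $\theta$ nor the relations (\ref{eq:us2})--(\ref{eq:us3}) are needed --- the statement is purely about the three projections.
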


\begin{proof}
We have 
\[
E_{+}E_{0}E_{-}=E_{+}E_{-}\Longrightarrow E_{+}^{ex}E_{0}E_{-}^{ex}=E_{+}^{ex}E_{-}^{ex}\Longleftrightarrow E_{+}^{ex}E_{0}=E_{+}^{ex}E_{-}^{ex}
\]
which proves the corollary. We used $E_{0}\leq E_{-}^{ex}$, so $E_{0}E_{-}^{ex}=E_{0}$. 
\end{proof}
\begin{rem}
The purpose of \corref{EP3} is a version (see (\ref{eq:mk7})) of
the Markov property which is closer to the one used for Markov processes;
see \secref{MP}.
\end{rem}

\section{Unitary operators, symmetries, and reflections}

In this section we introduce certain unitary representations which
are given to act on the fixed Hilbert space. So we consider a given
Hilbert space $\mathscr{H}$ which carries a reflection symmetry (in
the sense of Osterwalder-Schrader) as defined in \secref{GR}. If
the unitary representation under consideration, say $U$, is a representation
of a group $G$, then reflection-symmetry will refer to a suitable
semigroup $S$ in $G$, so a sub-semigroup. The setting is of interest
even in the three cases when $G$ is $\mathbb{Z}$, $\mathbb{R}$,
or some Lie group from quantum physics. In the cases $G=\mathbb{Z}$,
or $\mathbb{R}$, the semigroups are obvious, and, in each case, they
define a causality. (The case $G=\mathbb{Z}$ is simply the study
of a single unitary operator.) Nonetheless, the choice of semigroup
in the case when $G$ is a Lie group is more subtle; see \secref{MP}
below. However, many of the important spectral theoretic properties
may be developed initially in the cases $G=\mathbb{Z}$, or $\mathbb{R}$,
where the essential structures are more transparent. 
\begin{lem}
Let $\left\{ U_{t}\right\} _{t\in\mathbb{R}}$ be a unitary one-parameter
group in $\mathscr{H}$, such that $\theta U_{t}\theta=U_{-t}$, $t\in\mathbb{R}$,
and $U_{t}\mathscr{H}_{+}\subset\mathscr{H}_{+}$, $t\in\mathbb{R}_{+}$;
then 
\[
S_{t}=\widetilde{U_{t}}:\mathscr{K}\longrightarrow\mathscr{K},
\]
is a \emph{selfadjoint contraction semigroup}, $t\in\mathbb{R}_{+}$,
i.e., there is a selfadjoint generator $L$ in $\mathscr{K}$ (see
\figref{L}), 
\begin{equation}
\left\langle k,Lk\right\rangle _{\mathscr{K}}\geq0,\;\forall k\in dom\left(L\right),\label{eq:rp9}
\end{equation}
where
\begin{equation}
S_{t}(=\widetilde{U}_{t})=e^{-tL},\;t\in\mathbb{R}_{+},\;\text{and}\label{eq:rp10}
\end{equation}
\begin{equation}
S_{t_{1}}S_{t_{2}}=S_{t_{1}+t_{2}},\;t_{1},t_{2}\in\mathbb{R}_{+}.\label{eq:rp11}
\end{equation}
\end{lem}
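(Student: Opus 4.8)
The plan is to recognize $\left\{ S_{t}\right\} _{t\geq0}$ as a strongly continuous semigroup of positive selfadjoint contractions on $\mathscr{K}$ and then read off the generator from standard generation theory. First I would check that each $U_{t}$ with $t\geq0$ falls under the hypotheses of \lemref{UK}: since $U_{t}$ is unitary and $\theta U_{t}\theta=U_{-t}=U_{t}^{*}$, condition (\ref{eq:rp2}) holds, while $U_{t}\mathscr{H}_{+}\subset\mathscr{H}_{+}$ is exactly (\ref{eq:rp3}). Hence $S_{t}=\widetilde{U_{t}}$ is selfadjoint and contractive on $\mathscr{K}$, and $S_{0}=\widetilde{I}=I_{\mathscr{K}}$. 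The semigroup identity (\ref{eq:rp11}) then follows from the multiplicativity lemma for $\mathscr{A}_{+}$ proved above: for $s,t\geq0$ we have $U_{s},U_{t}\in\mathscr{A}_{+}$, so $S_{s+t}=\widetilde{U_{s}U_{t}}=\widetilde{U_{s}}\,\widetilde{U_{t}}=S_{s}S_{t}$.

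Next I would record positivity: since $S_{t/2}$ is selfadjoint, $S_{t}=S_{t/2}^{2}=S_{t/2}^{*}S_{t/2}\geq0$, so in fact $0\leq S_{t}\leq I$ on $\mathscr{K}$ for every $t\geq0$. The crucial analytic step is strong continuity, and this is where the change of inner product must be handled with care. For $h_{+}\in\mathscr{H}_{+}$ one has $U_{t}h_{+}-h_{+}\in\mathscr{H}_{+}$ by (\ref{eq:rp3}), so the seminorm formula gives $\left\Vert S_{t}q(h_{+})-q(h_{+})\right\Vert _{\mathscr{K}}^{2}=\left\langle U_{t}h_{+}-h_{+},\theta\left(U_{t}h_{+}-h_{+}\right)\right\rangle $, and by Schwarz together with $\left\Vert \theta\right\Vert =1$ this is dominated by $\left\Vert U_{t}h_{+}-h_{+}\right\Vert _{\mathscr{H}}^{2}$. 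Strong continuity of the unitary group $\left\{ U_{t}\right\} $ on $\mathscr{H}$ then forces $S_{t}q(h_{+})\to q(h_{+})$ in $\mathscr{K}$ as $t\to0^{+}$; since $q(\mathscr{H}_{+})$ is dense in $\mathscr{K}$ and the $S_{t}$ are uniform contractions, the usual $\varepsilon/3$ argument upgrades this to strong continuity of $\left\{ S_{t}\right\} _{t\geq0}$ on all of $\mathscr{K}$.

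Finally I would invoke the generation theorem. The semigroup $\left\{ S_{t}\right\} _{t\geq0}$ is $C_{0}$, contractive, and each $S_{t}$ is selfadjoint; passing to the adjoint semigroup, whose generator is $A^{*}$, and using $S_{t}^{*}=S_{t}$, forces the generator $A$ to satisfy $A^{*}=A$, i.e. $A$ is selfadjoint. Dissipativity of a contraction semigroup gives $\left\langle Ak,k\right\rangle _{\mathscr{K}}\leq0$ on $\mathrm{dom}(A)$, so $L:=-A$ is selfadjoint with $\left\langle k,Lk\right\rangle _{\mathscr{K}}\geq0$, which is (\ref{eq:rp9}); and $S_{t}=e^{tA}=e^{-tL}$ for $t\geq0$ is (\ref{eq:rp10}), while (\ref{eq:rp11}) was already established. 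The main obstacle, as flagged, is the strong-continuity step: because $\mathscr{K}$ is a completion in the renormalized inner product, continuity cannot simply be inherited from $\mathscr{H}$, and the domination $\left\Vert \cdot\right\Vert _{\mathscr{K}}\leq\left\Vert \cdot\right\Vert _{\mathscr{H}}$ on $q(\mathscr{H}_{+})$ together with strong continuity of $\left\{ U_{t}\right\} $ is precisely what bridges the two norms.
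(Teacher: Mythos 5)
Your proof is correct. The paper itself gives no argument for this lemma beyond a list of citations, and what you supply is the standard proof those references contain: reduction of selfadjointness and contractivity to \lemref{UK}, the semigroup law from the multiplicativity of $U\mapsto\widetilde{U}$, and then the generation theorem for $C_{0}$ contraction semigroups of selfadjoint operators. You also correctly isolate and handle the one genuinely delicate point, namely that strong continuity must be transported from $\mathscr{H}$ to the renormalized space $\mathscr{K}$ via the estimate $\left\Vert q\left(U_{t}h_{+}-h_{+}\right)\right\Vert _{\mathscr{K}}\leq\left\Vert U_{t}h_{+}-h_{+}\right\Vert _{\mathscr{H}}$ on the dense range of $q$ (which of course presupposes, as does the statement itself via \figref{L}, that $\left\{ U_{t}\right\} $ is strongly continuous on $\mathscr{H}$).
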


\begin{proof}
\begin{flushleft}
See \cite{MR545025,MR887102,MR874059,MR1767902}.
\par\end{flushleft}

\end{proof}
\begin{figure}[H]
\[
\xymatrix{
A\ar[d] & \mathscr{H}\ar[rr]^{U_t=e^{-tA}} & & \mathscr{H} & A^{*}=-A\\
L & \mathscr{K}\ar[rr]^{[S_{t}]_{t\in\mathbb{R}_+}}_{S_{t}=e^{-tL}} & & \mathscr{K} & L^{*}=L,\,\, L\geq0
}
\]

\caption{\label{fig:L}Transformation of skew-adjoint $A$ into selfadjoint
semibounded $L$. }
\end{figure}

\subsection{Two Examples}

We include details below (\exaref{rp}) to stress the distinction
between an abstract Hilbert-norm completion on the one hand, and a
concretely realized Hilbert space on the other.
\begin{example}[\cite{MR1641554,MR1767902}]
\label{exa:rp}Let $0<s<1$ be given, and let $\mathscr{H}=\mathscr{H}_{s}$
be the Hilbert space whose norm $\left\Vert f\right\Vert _{s}$ is
given by
\begin{equation}
\left\Vert f\right\Vert _{s}^{2}=\int_{\mathbb{R}}\int_{\mathbb{R}}\overline{f\left(x\right)}\left|x-y\right|^{s-1}f\left(y\right)dxdy.\label{eqInt.4}
\end{equation}
Let $a\in\mathbb{R}_{+}$ be given, and set
\begin{equation}
\left(U\left(a\right)f\right)\left(x\right)=a^{s+1}f\left(a^{2}x\right).\label{eqInt.5}
\end{equation}
It is clear that then $a\mapsto U\left(a\right)$ is a unitary representation
of the multiplicative group $\mathbb{R}_{+}$ acting on the Hilbert
space $\mathscr{H}_{s}$. It can be checked that $\left\Vert f\right\Vert _{s}$
in (\ref{eqInt.4}) is finite for all $f\in C_{c}\left(\mathbb{R}\right)$
($=$ the space of compactly supported functions on the line). Now
let $\mathscr{H}_{+}$ be the closure of $C_{c}\left(-1,1\right)$
in $\mathscr{H}_{s}$ relative to the norm $\left\Vert \cdot\right\Vert _{s}$
of (\ref{eqInt.4}). It is then immediate that $U\left(a\right)$,
for $a>1$, leaves $\mathscr{H}_{+}$ invariant, i.e., it restricts
to a semigroup of isometries $\left\{ U\left(a\right)\mathrel{;}a>1\right\} $
acting on $\mathscr{K}_{s}$. Setting
\begin{equation}
\left(\theta f\right)\left(x\right)=\left|x\right|^{-s-1}f\left(\frac{1}{x}\right),\quad x\in\mathbb{R}\setminus\left\{ 0\right\} ,\label{eqInt.6}
\end{equation}
we check that $\theta$ is then a period-$2$ unitary in $\mathscr{H}_{s}$,
and that 
\begin{equation}
\theta U\left(a\right)\theta=U\left(a\right)^{\ast}=U\left(a^{-1}\right)\label{eqInt.7}
\end{equation}
and
\begin{equation}
\left\langle f,\theta f\right\rangle _{\mathscr{H}_{s}}\geq0,\quad\forall f\in\mathscr{H}_{+},\label{eqInt.8}
\end{equation}
where $\left\langle \cdot,\cdot\right\rangle _{\mathscr{H}_{s}}$
is the inner product 
\begin{equation}
\left\langle f_{1},f_{2}\right\rangle _{\mathscr{H}_{s}}:=\int_{\mathbb{R}}\int_{\mathbb{R}}\overline{f_{1}\left(x\right)}\left|x-y\right|^{s-1}f_{2}\left(y\right)dxdy.\label{eqInt.9}
\end{equation}
In fact, if $f\in C_{c}\left(-1,1\right)$, the expression in (\ref{eqInt.8})
works out as the following reproducing kernel integral:
\begin{equation}
\int_{-1}^{1}\int_{-1}^{1}\overline{f\left(x\right)}\left(1-xy\right)^{s-1}f\left(y\right)dxdy,\label{eqInt.10}
\end{equation}
and we refer to \cite{Jor86,MR1641554,MR1767902,MR1895530} for more
details on this example. 

Hence up to a constant, the norm $\left\Vert \,\cdot\,\right\Vert _{s}$
of (\ref{eqInt.9}) may be rewritten as
\begin{equation}
\int_{\mathbb{R}}\left|\xi\right|^{-s}\left|\hat{f}\left(\xi\right)\right|^{2}\,d\xi,\label{eqRef.14}
\end{equation}
and the inner product $\left\langle \,\cdot\,,\,\cdot\,\right\rangle _{s}$
as 
\begin{equation}
\int_{\mathbb{R}}\left|\xi\right|^{-s}\overline{\hat{f}_{1}\left(\xi\right)}\hat{f}_{2}\left(\xi\right)\,d\xi,\label{eqRef.15}
\end{equation}
where 
\begin{equation}
\hat{f}\left(\xi\right)=\int_{\mathbb{R}}e^{-i\xi x}f\left(x\right)\,dx\label{eqRef.16}
\end{equation}
is the usual Fourier transform suitably extended to $\mathscr{H}_{s}$,
using Stein's singular integrals. Intuitively, $\mathscr{H}_{s}$
consists of functions on $\mathbb{R}$ which arise as $\left(\frac{d}{dx}\right)^{s}f_{s}$
for some $f_{s}$ in $L^{2}\left(\mathbb{R}\right)$. This also introduces
a degree of ``non-locality'' into the theory, and the functions
in $\mathscr{H}_{s}$ cannot be viewed as locally integrable, although
$\mathscr{H}_{s}$ for each $s$, $0<s<1$, contains $C_{c}\left(\mathbb{R}\right)$
as a dense subspace. In fact, formula (\ref{eqRef.14}), for the norm
in $\mathscr{H}_{s}$, makes precise in which sense elements of $\mathscr{H}_{s}$
are ``fractional'' derivatives of locally integrable functions on
$\mathbb{R}$, and that there are elements of $\mathscr{H}_{s}$ (and
of $\mathscr{K}_{s}$) which are not locally integrable. 

A main conclusion in \cite{MR1895530} for this example is that, when
$\mathscr{H}_{+}$ and $\mathscr{K}$ are as in (\ref{eqInt.10}),
then the natural contractive operator $q$ from (\ref{eq:rp6b})-(\ref{eq:rp7a})
is automatically 1-1, i.e., its kernel is 0.
\end{example}

\begin{rem}
Note that, in general, the spectral type changes in passing from $U$
to $\widetilde{U}$ in \lemref{UK}; see also \figref{rp}. For example,
$U$ from (\ref{eqInt.5}) above has absolutely continuous spectrum,
while $\widetilde{U}$ has purely discrete (atomic) spectrum: When
$a>1$, one checks that the spectrum of $\widetilde{U}\left(a\right)$
is the set $\left\{ a^{-2n}\mathrel{;}n\in\mathbb{N}\right\} $.
\end{rem}

\begin{example}[See \cite{MR1027505}: Reflection Positivity on a Schottky Double]
\label{exa:SD} Let $S$ denote a compact Riemann surface which arises
as a Schottky double of a bordered Riemann surface $T$ with boundary
$\partial T$. A Schottky double $S$ of $T$ is defined as a mirror
image $\overline{T}$, and $S$ is the union of $\overline{T}$, with
$T$ glued on $\partial T$. Thus, the double $S$ of $T$ has an
antiholomorphic involution $\theta T=\overline{T}$, such that $\partial T$
is the set of fixed points of $\theta$. Let $P_{0}\in T$ and define
$P_{\infty}=\theta\left(P_{0}\right)$. The points $P_{0}$ and $P_{\infty}$
then provide reference points on the Riemann surface, which are interchanged
by $\theta$, see \figref{sd}.

The standard case of a real, space-time $S^{1}\times\mathbb{R}$ can
be understood as follows. For $t\in\mathbb{R}$, $x\in S^{1}$, let
$\left(t,x\right)$ denote a space-time point. The map 
\begin{equation}
z=\exp\left[i\left(x+it\right)\right]\label{eq:sd1}
\end{equation}
defines a Riemann sphere $S$, with the half-space $t\geq0$ mapping
into the unit disc $T$ around the origin. Time reflection $\left(t,x\right)\rightarrow\left(-t,x\right)$
in space-time then maps into a reflection $\theta$ on the Riemann
sphere. In local coordinates, 
\begin{equation}
\theta\left(z\right)=\frac{1}{z^{*}}.\label{eq:sd2}
\end{equation}
We identify $\overline{T}$ as the unit disc about infinity, and consider
the Riemann sphere as a Schottky double of the unit disc, with $\theta$
given by (\ref{eq:sd2}). A convenient choice for $P_{0}$ is $P_{0}=0$,
so $P_{\infty}=\infty$, and this comes from Euclidean space-time
points at $t=\mp\infty$. In particular, the operator $\theta$ on
the Riemann sphere can be thought of as a reflection through the unit
circle $\left|z\right|=1$. 

The corresponding ``infinite volume'' space-time $\left(t,x\right)\in\mathbb{R}^{2}$
can also be studied. A compactification is given by the map
\[
z=\frac{x+i\left(t-1\right)}{x+i\left(t+1\right)}.
\]
\end{example}

\begin{figure}
\includegraphics[width=0.4\textwidth]{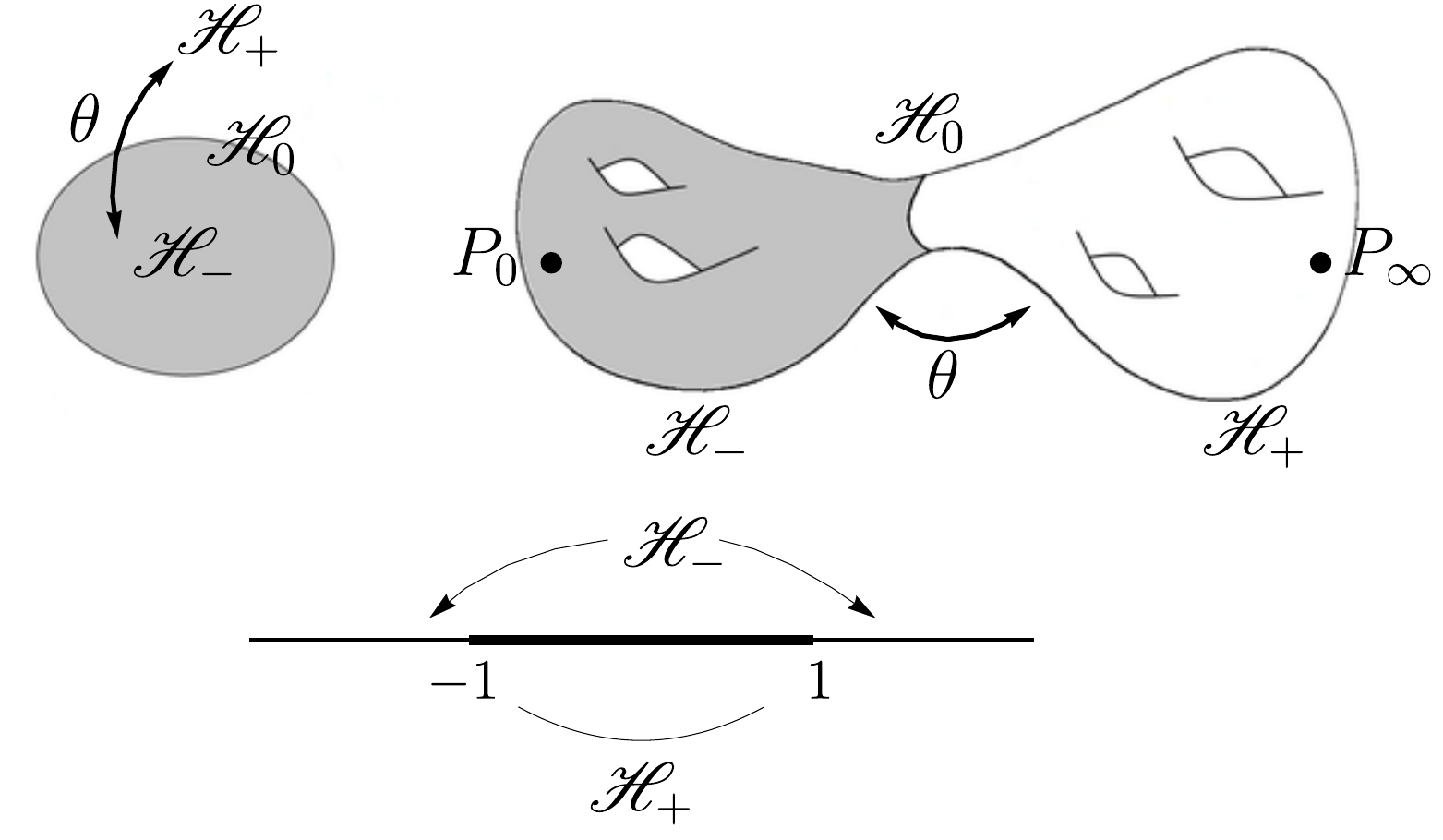}

\caption{\label{fig:sd}(a) The complex plane, inside and outside of the disk;
see sect \ref{sec:ns}, and \cite{MR1895530}. (b) The Schottky double
$S$ of a bordered Riemann surface $T$ with boundary $\partial T$;
see \exaref{SD}. (c) The real line, inside and outside of a fixed
interval; see \exaref{rp}. }
\end{figure}

\section{\label{sec:MO}A characterization of the Markov property: Markov
vs O.-S. positivity}

In the classical case of Gaussian processes (see \cite{AD92,AD93,ABDdS93,AJSV13,AJV14}),
the question of \emph{reflection symmetry} and \emph{reflection positivity}
is of great interest; see, e.g., \cite{MR2862151,MR3051696,JP13,MR3285409,MR3167762,MR3246982,AJV14,MR3392505,MR3513873,MR3614177},
and also \cite{MR0496171,MR0518418,MR659539,MR2337697}. 

Let $\mathscr{H}$ be a given (fixed) Hilbert space; e.g., $\mathscr{H}=L^{2}\left(\Omega,\mathscr{F},\mathbb{P}\right)$,
square integrable random variables, where $\Omega$ is a set (sample
space) with a $\sigma$-algebra of subsets $\mathscr{F}$ (information),
and $\mathbb{P}$ a given probability measure on $\left(\Omega,\mathscr{F}\right)$.
But the question may in fact be formulated for an arbitrary Hilbert
space $\mathscr{H}$, and possible inseparable generally. 

Recall that $\theta:\mathscr{H}\rightarrow\mathscr{H}$ is a \emph{reflection}
if it satisfies $\theta^{*}=\theta$, and $\theta^{2}=I_{\mathscr{H}}$. 
\begin{defn}
\label{def:Ref}Given a Hilbert space $\mathscr{H}$, let 
\begin{equation}
Ref\left(\mathscr{H}\right)=\left\{ \theta:\mathscr{H}\rightarrow\mathscr{H}\mathrel{;}\theta^{*}=\theta,\;\theta^{2}=I_{\mathscr{H}}\right\} ,
\end{equation}
i.e., all reflections in $\mathscr{H}$; see (\ref{eq:rp1}).
\end{defn}

\begin{lem}
\label{lem:theta}Let $\theta:\mathscr{H}\rightarrow\mathscr{H}$
be linear, and let $\mathscr{H}_{\pm}$ be a pair of closed subspaces
of $\mathscr{H}$ with respective projections $E_{\pm}$; then TFAE:
\begin{enumerate}
\item[(i)] $\theta\left(\mathscr{H}_{+}\right)\subseteq\mathscr{H}_{-}$, i.e.,
$\theta$ maps $\mathscr{H}_{+}$ into $\mathscr{H}_{-}$; 
\item[(ii)] $E_{-}\theta E_{+}=\theta E_{+}$. 
\end{enumerate}
\end{lem}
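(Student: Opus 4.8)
The plan is to prove the two implications separately, using only the elementary fact---recorded in the Note on terminology and in \defref{OP}(vi)---that a projection $E$ fixes exactly the vectors in its range: $Ey=y$ if and only if $y\in E\mathscr{H}$. Note that neither the involutivity $\theta^{2}=I_{\mathscr{H}}$ nor the self-adjointness of $\theta$ is needed here; linearity of $\theta$ together with this characterization of projections suffices, which is why the hypothesis on $\theta$ in the statement is merely linearity.

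For (i)$\Rightarrow$(ii): I would assume $\theta\left(\mathscr{H}_{+}\right)\subseteq\mathscr{H}_{-}$ and fix an arbitrary $h\in\mathscr{H}$. Since $E_{+}h\in\mathscr{H}_{+}$, we get $\theta E_{+}h\in\theta\left(\mathscr{H}_{+}\right)\subseteq\mathscr{H}_{-}$. Because $E_{-}$ is the projection onto $\mathscr{H}_{-}$, it acts as the identity on $\mathscr{H}_{-}$, so $E_{-}\left(\theta E_{+}h\right)=\theta E_{+}h$. As $h$ was arbitrary, this is exactly $E_{-}\theta E_{+}=\theta E_{+}$.

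For (ii)$\Rightarrow$(i): I would assume $E_{-}\theta E_{+}=\theta E_{+}$ and fix $h_{+}\in\mathscr{H}_{+}$. Then $E_{+}h_{+}=h_{+}$, so applying the hypothesis gives $\theta h_{+}=\theta E_{+}h_{+}=E_{-}\left(\theta E_{+}h_{+}\right)=E_{-}\left(\theta h_{+}\right)$. The equality $E_{-}\left(\theta h_{+}\right)=\theta h_{+}$ says precisely that $\theta h_{+}\in\mathscr{H}_{-}$; since $h_{+}\in\mathscr{H}_{+}$ was arbitrary, $\theta\left(\mathscr{H}_{+}\right)\subseteq\mathscr{H}_{-}$, which is (i).

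There is no real obstacle here: the statement is a direct translation between the subspace language (``$\theta$ maps $\mathscr{H}_{+}$ into $\mathscr{H}_{-}$'') and the projection language (``$E_{-}$ leaves $\theta E_{+}$ unchanged''). The only point requiring care is invoking the correct defining property of projections in each direction---that $E_{+}$ fixes the vectors of $\mathscr{H}_{+}$ (used in (ii)$\Rightarrow$(i)) and that $E_{-}$ fixes the vectors of $\mathscr{H}_{-}$ (used in (i)$\Rightarrow$(ii)). Indeed, one could compress the whole argument to the chain $E_{-}\theta E_{+}=\theta E_{+}\Longleftrightarrow\operatorname{Ran}\left(\theta E_{+}\right)\subseteq\mathscr{H}_{-}\Longleftrightarrow\theta\left(\operatorname{Ran}E_{+}\right)\subseteq\mathscr{H}_{-}$, where the first equivalence is the statement that $\theta E_{+}h$ is fixed by $E_{-}$ for every $h$, and the second uses $\operatorname{Ran}E_{+}=\mathscr{H}_{+}$.
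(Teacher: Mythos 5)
Your proof is correct and is exactly the elaboration the paper intends: the paper's own proof consists of the single remark that the claim "follows from the basic fact that $E_{+}\mathscr{H}=\left\{ h_{+}\in\mathscr{H}\mathrel{;}E_{+}h_{+}=h_{+}\right\}$," and your two implications simply spell out that same fact applied to $E_{+}$ and $E_{-}$ in turn. Nothing further is needed.
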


\begin{proof}
Follows from the basic fact that $E_{+}\mathscr{H}=\left\{ h_{+}\in\mathscr{H}\mathrel{;}E_{+}h_{+}=h_{+}\right\} $. 
\end{proof}
We saw in \corref{EP1} that reflection $\theta$ satisfying $E_{+}\theta E_{+}\geq0$
are in 1-1 correspondence with contraction operators $C:\mathscr{H}_{+}\left(\theta\right)\longrightarrow\mathscr{H}_{-}\left(\theta\right)$,
where $\mathscr{H}_{\pm}\left(\theta\right)=\left\{ h_{\pm}\in\mathscr{H}_{\pm}\mathrel{;}\theta h_{\pm}=h_{\pm}\right\} $.
We now fix $E_{+}$, and therefore the subspace $\mathscr{H}_{+}:=E_{+}\mathscr{H}$. 

Let $\theta$, $\theta'$ be a pair of reflections (see \secref{GR}
and \lemref{theta} above), and assume they share the same pair $\mathscr{H}_{\pm}$,
i.e., 
\begin{equation}
\theta\mathscr{H}_{+}\subseteq\mathscr{H}_{-},\quad\theta'\mathscr{H}_{+}\subseteq\mathscr{H}_{-}.\label{eq:tp2}
\end{equation}
\begin{lem}
Let $P$ and $P'$ be the projections onto $\left\{ h\in\mathscr{H}\mathrel{;}\theta h=h\right\} $,
and $\left\{ h'\in\mathscr{H}\mathrel{;}\theta'h'=h'\right\} $, i.e.,
we have $\theta=2P-I_{\mathscr{H}}$, and $\theta'=2P'-I_{\mathscr{H}}$.
Let $C$ and $C'$ be the corresponding contractions: $C:P\mathscr{H}\longrightarrow P^{\perp}\mathscr{H}$,
and $C':P'\mathscr{H}\longrightarrow\left(P'\right)^{\perp}\mathscr{H}$.
Then 
\begin{alignat}{2}
\mathscr{H}_{+} & =Graph\left(C\right) &  & =\left\{ x+Cx\mathrel{;}x\in P\mathscr{H}\right\} \label{eq:tp3}\\
 & =Graph\left(C'\right) &  & =\left\{ x'+C'x'\mathrel{;}x'\in P'\mathscr{H}\right\} ;\nonumber 
\end{alignat}
and 
\begin{align*}
\theta\left(x+Cx\right) & =x-Cx\in\mathscr{H}_{-};\;\text{and}\\
\theta'\left(x'+C'x'\right) & =x'-C'x'\in\mathscr{H}_{-}.
\end{align*}
Moreover, $\left(I_{\mathscr{H}}+C'\right)\big|_{P'\mathscr{H}}$
has a one-sided inverse, and there is an operator 
\begin{equation}
V:P\mathscr{H}\longrightarrow P'\mathscr{H}\label{eq:tp4}
\end{equation}
such that 
\begin{equation}
V\big|_{P\mathscr{H}}=\left(I_{\mathscr{H}}+C'\right)^{-1}\left(I_{\mathscr{H}}+C\right)\big|_{P\mathscr{H}}.\label{eq:tp5}
\end{equation}
\end{lem}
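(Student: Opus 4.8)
The plan is to exploit the single identity $\mathscr{H}_{+}=\mathrm{Graph}\left(C\right)=\mathrm{Graph}\left(C'\right)$ recorded in \eqref{eq:tp3}, which sets up a canonical bijection between $P\mathscr{H}$ and $P'\mathscr{H}$ mediated by the common subspace $\mathscr{H}_{+}$. First I would note that, by \lemref{EP1} and \corref{EP1}, the map $\left(I_{\mathscr{H}}+C'\right)\big|_{P'\mathscr{H}}:P'\mathscr{H}\rightarrow\mathscr{H}$ has range exactly $\mathscr{H}_{+}$, since $\left(I_{\mathscr{H}}+C'\right)x'=x'+C'x'$ and these are precisely the elements of $\mathrm{Graph}\left(C'\right)$.

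Second, I would establish the one-sided inverse. The restriction $\left(I_{\mathscr{H}}+C'\right)\big|_{P'\mathscr{H}}$ is injective: if $x'+C'x'=0$ with $x'\in P'\mathscr{H}$ and $C'x'\in\left(P'\right)^{\perp}\mathscr{H}$, then the two mutually orthogonal summands must vanish separately, forcing $x'=0$. Concretely, a left inverse is furnished by $P'$ itself, because $P'\left(x'+C'x'\right)=x'$ (the second summand lies in $\left(P'\right)^{\perp}\mathscr{H}$). Thus $\left(I_{\mathscr{H}}+C'\right)\big|_{P'\mathscr{H}}$ is a bijection of $P'\mathscr{H}$ onto $\mathscr{H}_{+}$, and I will write $\left(I_{\mathscr{H}}+C'\right)^{-1}$ for the inverse of this restriction, defined on $\mathscr{H}_{+}$.

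Third, I would define $V$ directly: for $x\in P\mathscr{H}$, the vector $h_{+}:=x+Cx$ lies in $\mathscr{H}_{+}=\mathrm{Graph}\left(C'\right)$, so there is a unique $x'\in P'\mathscr{H}$ with $h_{+}=x'+C'x'$; set $Vx:=x'$. Uniqueness of $x'$ (from the injectivity above) makes $V$ well defined and linear, giving the operator of \eqref{eq:tp4}. The displayed formula \eqref{eq:tp5} then follows by a one-line computation: $\left(I_{\mathscr{H}}+C'\right)\left(Vx\right)=x'+C'x'=h_{+}=x+Cx=\left(I_{\mathscr{H}}+C\right)x$, and applying the inverse of the restriction yields $Vx=\left(I_{\mathscr{H}}+C'\right)^{-1}\left(I_{\mathscr{H}}+C\right)x$ on $P\mathscr{H}$.

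The only genuinely delicate point, and hence the step I expect to require the most care, is the meaning of the symbol $\left(I_{\mathscr{H}}+C'\right)^{-1}$: the operator $I_{\mathscr{H}}+C'$ maps into all of $\mathscr{H}$ and is \emph{not} globally invertible, so the formula must be read with $\left(I_{\mathscr{H}}+C'\right)^{-1}$ denoting the inverse of the bijection onto $\mathscr{H}_{+}$, equivalently the composition with the left inverse $P'$. One must therefore verify that the composition is legitimate, namely that $\left(I_{\mathscr{H}}+C\right)$ indeed lands in $\mathscr{H}_{+}$ before the $C'$-side inverse may be applied; but this is exactly the graph representation $\mathscr{H}_{+}=\mathrm{Graph}\left(C\right)$. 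Once this bookkeeping of domains and ranges is in place, the remainder is purely formal.
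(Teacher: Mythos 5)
Your proposal is correct and follows essentially the same route as the paper: both arguments rest on the double graph representation $\mathscr{H}_{+}=\mathrm{Graph}\left(C\right)=\mathrm{Graph}\left(C'\right)$ from \lemref{EP1} and \corref{EP1}, define $V$ by matching $x+Cx=Vx+C'Vx$, and read off \eqref{eq:tp5}. You merely supply details the paper leaves implicit, namely the injectivity of $\left(I_{\mathscr{H}}+C'\right)\big|_{P'\mathscr{H}}$ and the identification of $P'$ as the explicit left inverse, which is a welcome clarification of what ``one-sided inverse'' means there.
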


\begin{proof}
This is essentially a consequence of the characterization in \lemref{EP1}
and \corref{EP1}. Indeed, from this, we get the existence of the
operator $V$ as specified in (\ref{eq:tp4}), and satisfying
\begin{equation}
\mathscr{H}_{+}\ni x+Cx=Vx+C'Vx,\label{eq:tp6}
\end{equation}
for all $x\in P\mathscr{H}$. But (\ref{eq:tp6}) may be rewritten
as: 
\[
\left(I_{\mathscr{H}}+C\right)x=\left(I_{\mathscr{H}}+C'\right)Vx;
\]
and the desired conclusion (\ref{eq:tp5}) now follows.
\end{proof}
\begin{defn}
\label{def:E}If $E_{0}$, $E_{\pm}$ are projections in $\mathscr{H}$,
let $\varepsilon=\left(E_{0},E_{\pm}\right)$, and set 
\begin{align}
\mathscr{E}\left(Markov\right) & :=\left\{ \left(E_{0},E_{\pm}\right)\mathrel{;}E_{+}E_{0}E_{-}=E_{+}E_{-}\right\} ,\\
\mathscr{R}\left(\varepsilon\right) & :=\left\{ \theta\in Ref\left(\mathscr{H}\right)\mathrel{;}\theta E_{0}=E_{0},\;\theta E_{+}=E_{-}\theta E_{+},\:\theta E_{-}=E_{+}\theta E_{-}\right\} .\label{eq:e2}
\end{align}

Fix $\theta\in Ref\left(\mathscr{H}\right)$, so that $\theta^{2}=I_{\mathscr{H}}$,
$\theta^{*}=\theta$, set: 
\begin{equation}
\mathscr{E}\left(\theta\right):=\left\{ \left(E_{0},E_{\pm}\right)\mathrel{;}\theta E_{0}=E_{0},\;\theta E_{+}=E_{-}\theta E_{+},\:\theta E_{-}=E_{+}\theta E_{-}\right\} .\label{eq:e3}
\end{equation}
\end{defn}

\begin{rem*}
Recall that $E$ is a projection in $\mathscr{H}$ iff (Def.) $E=E^{2}=E^{*}$;
see \defref{OP}. 

In (\ref{eq:e2}) and (\ref{eq:e3}), the conditions on $\theta$
and the triple of projections $\varepsilon=\left(E_{0},E_{\pm}\right)$
are as follows: $\theta E_{0}=E_{0}$, $\theta\left(\mathscr{H}_{+}\right)\subseteq\mathscr{H}_{-}$
and $\theta\left(\mathscr{H}_{-}\right)\subseteq\mathscr{H}_{+}$;
see \lemref{theta}. 
\end{rem*}
\begin{question*}
(1) Given $\varepsilon$, what is $\mathscr{R}\left(\varepsilon\right)$?
(2) Given $\theta$, what is $\mathscr{E}\left(\theta\right)$? 
\end{question*}
\begin{defn}
Suppose $\varepsilon=\left(E_{0},E_{\pm}\right)$ is given, and $\theta\in\mathscr{R}\left(\varepsilon\right)$.
\begin{enumerate}
\item We say that \emph{reflection positivity} holds iff (Def.)
\begin{equation}
E_{+}\theta E_{+}\geq0,\label{eq:e8}
\end{equation}
also called \emph{Osterwalder-Schrader positivity} (O.S.-p). 
\item Given $\varepsilon$, we say that it satisfies the \emph{Markov property}
iff (Def.)
\begin{equation}
E_{+}E_{0}E_{-}=E_{+}E_{-}.\label{eq:e5}
\end{equation}
\item We set 
\begin{equation}
\mathscr{E}_{OS}\left(\theta\right)=\left\{ \left(E_{0},E_{\pm}\right)\mathrel{;}E_{+}\theta E_{+}\geq0\right\} .
\end{equation}
\end{enumerate}
\end{defn}

\begin{lem}
\label{lem:MP}Suppose (\ref{eq:e5}) holds (the Markov property),
and $\theta\in\mathscr{R}\left(\varepsilon\right)$, then 
\begin{equation}
E_{+}\theta E_{+}\ge0,\label{eq:EOS}
\end{equation}
i.e., the O.S.-positivity condition (\ref{eq:e8}) follows. 
\end{lem}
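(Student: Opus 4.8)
The plan is to establish the sharper \emph{operator} identity
\[
E_{+}\theta E_{+}=E_{+}E_{0}E_{+},
\]
from which the conclusion (\ref{eq:EOS}) is automatic: since $E_{0}$ is a projection, $E_{+}E_{0}E_{+}=\left(E_{0}E_{+}\right)^{*}\left(E_{0}E_{+}\right)\ge0$ by \defref{OS}. Thus the whole statement reduces to a purely algebraic manipulation of the projections $E_{0},E_{\pm}$ and the reflection $\theta$, with no analysis or limiting arguments needed.

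First I would record the elementary consequences of the hypotheses. From $\theta\in\mathscr{R}\left(\varepsilon\right)$ I have the mapping condition $\theta E_{+}=E_{-}\theta E_{+}$ (equivalently $\theta\mathscr{H}_{+}\subseteq\mathscr{H}_{-}$; see \lemref{theta}), together with $\theta E_{0}=E_{0}$. Taking adjoints of the latter, and using $\theta^{*}=\theta$ and $E_{0}^{*}=E_{0}$, yields the companion relation $E_{0}\theta=E_{0}$; this is \lemref{EP0} recast as an operator equation (it is exactly the content of $E_{0}\le P$). The Markov hypothesis $E_{+}E_{0}E_{-}=E_{+}E_{-}$ I will use exactly as stated, with no adjoint.

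Then the identity follows by a short substitution chain. Starting from $E_{+}\theta E_{+}$, I use $\theta E_{+}=E_{-}\theta E_{+}$ to write it as $E_{+}E_{-}\theta E_{+}$; I rewrite $E_{+}E_{-}=E_{+}E_{0}E_{-}$ by the Markov property to obtain $E_{+}E_{0}E_{-}\theta E_{+}$; I collapse $E_{-}\theta E_{+}=\theta E_{+}$ (the mapping condition again) to reach $E_{+}E_{0}\theta E_{+}$; and finally I apply $E_{0}\theta=E_{0}$ to arrive at $E_{+}E_{0}E_{+}$. This establishes the displayed identity, and hence O.S.-positivity.

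I expect no real obstacle in this computation; the only point demanding care is the \emph{direction} in which each relation is applied. The Markov property is written precisely so that $E_{0}$ can be spliced into the product $E_{+}E_{-}$, while the role of $\theta E_{0}=E_{0}$ is precisely to let $\theta$ be absorbed by $E_{0}$ at the final step, after the mapping condition has brought a bare $\theta E_{+}$ adjacent to $E_{0}$. It is worth noting that the same argument yields the exact value $\left\langle h_{+},\theta h_{+}\right\rangle =\left\Vert E_{0}h_{+}\right\Vert ^{2}$ for $h_{+}\in\mathscr{H}_{+}$, a quantitative refinement in the spirit of the factorization $E_{+}\theta E_{+}=q^{*}q$ of \thmref{TB}.
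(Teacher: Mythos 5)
Your proof is correct and follows essentially the same route as the paper's: the identical substitution chain $E_{+}\theta E_{+}=E_{+}E_{-}\theta E_{+}=E_{+}E_{0}E_{-}\theta E_{+}=E_{+}E_{0}\theta E_{+}=E_{+}E_{0}E_{+}\geq0$, with the same final positivity observation $\left\langle h,E_{+}E_{0}E_{+}h\right\rangle =\left\Vert E_{0}E_{+}h\right\Vert ^{2}$. Your explicit remark that $E_{0}\theta=E_{0}$ is obtained by taking adjoints of $\theta E_{0}=E_{0}$ is a small clarification the paper leaves implicit, but the argument is the same.
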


\begin{proof}
Using the properties in (\ref{eq:e2}), we have
\begin{eqnarray*}
E_{+}\theta E_{+} & \underset{\text{by }\left(\ref{eq:e2}\right)}{=} & E_{+}E_{-}\theta E_{+}\\
 & \underset{\text{by }\left(\ref{eq:e5}\right)}{=} & \left(E_{+}E_{0}E_{-}\right)\theta E_{+}\\
 & = & E_{+}E_{0}\left(E_{-}\theta E_{+}\right)\\
 & \underset{\text{by }\left(\ref{eq:e2}\right)}{=} & E_{+}E_{0}\theta E_{+}\underset{\text{by }\left(\ref{eq:e2}\right)}{=}E_{+}E_{0}E_{+}\geq0,
\end{eqnarray*}
where ``$\ge$'' is in the sense of ordering of selfadjoint operators. 

Note for any pair of projections, we have: $E_{+}E_{0}E_{+}\geq0$,
since 
\[
\left\langle h,E_{+}E_{0}E_{+}h\right\rangle =\left\langle E_{+}h,E_{0}E_{+}h\right\rangle =\left\Vert E_{0}E_{+}h\right\Vert ^{2}\geq0;
\]
where $E_{0}=E_{0}^{*}=E_{0}^{2}$ by definition. 
\end{proof}
Recall the definition of $\mathscr{R}\left(\varepsilon\right)$ and
$\mathscr{R}\left(\varepsilon,U\right)$. \lemref{MP} can be reformulated
as: 
\begin{lem}
\label{lem:MP2}For all $\theta\in\mathscr{R}\left(\varepsilon\right)$,
we have 
\begin{equation}
\mathscr{E}\left(Markov\right)\cap\mathscr{E}\left(\theta\right)\subseteq\mathscr{E}_{OS}\left(\theta\right).
\end{equation}
(See Definitions \ref{def:Ref}, \ref{def:E}, and eq. (\ref{eq:EOS}).) 
\end{lem}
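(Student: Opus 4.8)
The plan is to recognize that this statement is a purely set-theoretic repackaging of \lemref{MP}, so the real content lies in matching up the defining membership conditions rather than in any new operator estimate. First I would unwind the two relevant definitions. A triple $\varepsilon=\left(E_{0},E_{\pm}\right)$ lies in $\mathscr{E}\left(\theta\right)$ exactly when $\theta E_{0}=E_{0}$, $\theta E_{+}=E_{-}\theta E_{+}$, and $\theta E_{-}=E_{+}\theta E_{-}$ (see (\ref{eq:e3})); comparing with the definition of $\mathscr{R}\left(\varepsilon\right)$ in (\ref{eq:e2}), these are literally the same three relations. Hence I would first record the tautological equivalence
\[
\left(E_{0},E_{\pm}\right)\in\mathscr{E}\left(\theta\right)\Longleftrightarrow\theta\in\mathscr{R}\left(\varepsilon\right),
\]
which merely reflects that the compatibility between a reflection and a triple of projections does not care which of the two objects is regarded as fixed.

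Next, I would fix an arbitrary reflection $\theta$ and take any $\varepsilon=\left(E_{0},E_{\pm}\right)$ in the intersection $\mathscr{E}\left(Markov\right)\cap\mathscr{E}\left(\theta\right)$. Membership in $\mathscr{E}\left(Markov\right)$ gives the Markov relation $E_{+}E_{0}E_{-}=E_{+}E_{-}$, which is precisely hypothesis (\ref{eq:e5}) of \lemref{MP}. Membership in $\mathscr{E}\left(\theta\right)$, through the equivalence above, yields $\theta\in\mathscr{R}\left(\varepsilon\right)$, supplying the intertwining relations (\ref{eq:e2}) that \lemref{MP} also requires. With both hypotheses in force I may invoke \lemref{MP} to conclude $E_{+}\theta E_{+}\geq0$, that is $\varepsilon\in\mathscr{E}_{OS}\left(\theta\right)$. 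As $\varepsilon$ was arbitrary in the intersection, the desired inclusion $\mathscr{E}\left(Markov\right)\cap\mathscr{E}\left(\theta\right)\subseteq\mathscr{E}_{OS}\left(\theta\right)$ follows.

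I do not expect any genuine analytic obstacle. The only step demanding care is the bookkeeping in the first paragraph: one must confirm that the conditions cutting out $\mathscr{E}\left(\theta\right)$ coincide verbatim with those cutting out $\mathscr{R}\left(\varepsilon\right)$, so that the hypothesis ``$\theta\in\mathscr{R}\left(\varepsilon\right)$'' is genuinely available for every triple in the intersection. Once that identification is in place, the algebraic computation already performed in the proof of \lemref{MP}---the chain $E_{+}\theta E_{+}=E_{+}E_{-}\theta E_{+}=E_{+}E_{0}E_{-}\theta E_{+}=\cdots=E_{+}E_{0}E_{+}\geq0$, together with the elementary positivity $\left\langle h,E_{+}E_{0}E_{+}h\right\rangle =\left\Vert E_{0}E_{+}h\right\Vert ^{2}\geq0$---does all the remaining work, so no further estimate is needed.
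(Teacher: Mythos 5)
Your proposal is correct and matches the paper exactly: the paper offers no separate argument for this lemma, introducing it with the remark that \lemref{MP} ``can be reformulated as'' this statement, which is precisely the definition-matching and invocation of \lemref{MP} that you carry out. The one point you flag as needing care---that the conditions defining $\mathscr{E}\left(\theta\right)$ in (\ref{eq:e3}) coincide verbatim with those defining $\mathscr{R}\left(\varepsilon\right)$ in (\ref{eq:e2})---is indeed the whole content, and your verification of it is right.
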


\begin{question*}
Let $\varepsilon=\left(E_{0},E_{\pm}\right)$ be given, and suppose
$E_{+}\theta E_{+}\geq0$, for all $\theta\in\mathscr{R}\left(\varepsilon\right)$,
then does it follow that $E_{+}E_{0}E_{-}=E_{+}E_{-}$ holds? (See
\thmref{OSM} below for an affirmative answer.)
\end{question*}
\begin{thm}
\label{thm:OSM}Given an infinite-dimensional complex Hilbert space
$\mathscr{H}$, let the setting be as above, i.e., reflections, Markov
property, and O.S.-positivity defined as stated. Then 
\begin{equation}
\bigcap_{\theta\in\mathscr{R}\left(\varepsilon\right)}\mathscr{E}_{OS}\left(\theta\right)=\mathscr{E}\left(Markov\right).\label{eq:m1}
\end{equation}
\end{thm}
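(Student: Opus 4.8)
The first thing I would do is unwind the set-identity \eqref{eq:m1} into a single biconditional about a \emph{fixed} triple $\varepsilon=(E_{0},E_{\pm})$: the triple $\varepsilon$ lies in the left-hand intersection exactly when $E_{+}\theta E_{+}\geq0$ holds for \emph{every} admissible reflection $\theta\in\mathscr{R}(\varepsilon)$, and this is what I must show is equivalent to the Markov relation $E_{+}E_{0}E_{-}=E_{+}E_{-}$. I work under the standing Markov hypothesis $E_{0}\leq E_{+}$ and $E_{0}\leq E_{-}$ (present contained in past and future) and write $\mathscr{M}_{\pm}:=\mathscr{H}_{\pm}\ominus\mathscr{H}_{0}$. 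The inclusion $\mathscr{E}(Markov)\subseteq\bigcap_{\theta}\mathscr{E}_{OS}(\theta)$ is the easy half and is already in hand: it is precisely \lemref{MP} (equivalently \lemref{MP2}), which shows that when the Markov property holds then $E_{+}\theta E_{+}\geq0$ for each $\theta\in\mathscr{R}(\varepsilon)$.

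All the content is therefore in the reverse inclusion, which I would prove by contraposition: assuming the Markov relation fails, I must exhibit one $\theta\in\mathscr{R}(\varepsilon)$ with $E_{+}\theta E_{+}\not\geq0$. The first step is a geometric reformulation. Since $E_{+}E_{0}E_{-}=E_{+}E_{-}$ is the same as $E_{+}(I-E_{0})E_{-}=0$, and since $E_{0}\leq E_{\pm}$ gives $(I-E_{0})\mathscr{H}_{-}=\mathscr{M}_{-}$ with $\mathscr{M}_{-}\perp\mathscr{H}_{0}$, the Markov property is equivalent to $\mathscr{M}_{+}\perp\mathscr{M}_{-}$; its failure means exactly that $\mathscr{M}_{+}$ and $\mathscr{M}_{-}$ are not orthogonal. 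Next I record how any $\theta\in\mathscr{R}(\varepsilon)$ acts: $\theta^{2}=I$ together with $\theta\mathscr{H}_{+}\subseteq\mathscr{H}_{-}$ and $\theta\mathscr{H}_{-}\subseteq\mathscr{H}_{+}$ forces $\theta\mathscr{H}_{+}=\mathscr{H}_{-}$, and since $\theta$ fixes $\mathscr{H}_{0}$ it restricts to a unitary involution carrying $\mathscr{M}_{+}$ onto $\mathscr{M}_{-}$. Thus for $a\in\mathscr{M}_{+}$ one has $\theta a\in\mathscr{M}_{-}$ and $\langle a,\theta a\rangle$ is real; this is the number I must drive strictly negative.

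The construction of $\theta$ is the heart of the matter. Using the two-subspace (Halmos CS) decomposition of $\overline{\mathscr{M}_{+}+\mathscr{M}_{-}}$ relative to the pair $(\mathscr{M}_{+},\mathscr{M}_{-})$, I would split this space into the common part $\mathscr{M}_{+}\cap\mathscr{M}_{-}$, the mutually orthogonal remainders, and a ``generic'' part that is a sum of two-dimensional blocks, on each of which $\mathscr{M}_{+}$ and $\mathscr{M}_{-}$ appear as two lines meeting at an angle $\alpha\in(0,\pi/2)$. A short computation on a single such block shows there are exactly two reflections interchanging the two lines, parametrized by one sign: the bisector reflection, giving $\langle a,\theta a\rangle=+|c|\geq0$, and the anti-bisector reflection, giving $\langle a,\theta a\rangle=-|c|<0$, where $c=\langle a,b\rangle\neq0$ is the overlap of the unit generators. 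I then define $\theta$ globally: the identity on $(\mathscr{H}_{+}+\mathscr{H}_{-})^{\perp}$ and the fixed action on $\mathscr{H}_{0}$; a flip $J\oplus J^{*}$ on the orthogonal remainders; the bisector reflection on most generic blocks; and the anti-bisector reflection on one block with $\alpha<\pi/2$ (or $-I$ on a nonzero piece of the common part, should the non-orthogonality reside there). Because $\mathscr{M}_{+}\not\perp\mathscr{M}_{-}$ such a block or common piece exists, so I obtain $a\in\mathscr{M}_{+}$ with $\langle a,\theta a\rangle<0$, i.e. $E_{+}\theta E_{+}\not\geq0$, while by construction $\theta$ is a genuine reflection swapping $\mathscr{H}_{+}\leftrightarrow\mathscr{H}_{-}$ and fixing $\mathscr{H}_{0}$, so $\theta\in\mathscr{R}(\varepsilon)$.

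The main obstacle is exactly this construction step: converting the failed operator inequality into an honest unitary involution of all of $\mathscr{H}$. The difficulty is that $\mathscr{M}_{+}$ and $\mathscr{M}_{-}$ are not orthogonal, so one cannot simply prescribe $\theta$ to be a unitary $\mathscr{M}_{+}\to\mathscr{M}_{-}$ and its inverse back; the involution and unitarity requirements couple the two subspaces, and it is precisely the CS decomposition that decouples them and reveals the single degree of freedom (bisector versus anti-bisector) that I exploit. Two subsidiary points must be verified en route: that the orthogonal remainders have equal dimension so that a flip exists at all --- this is where the infinite-dimensionality of $\mathscr{H}$ together with the nonemptiness of $\mathscr{R}(\varepsilon)$ enters, to rule out parity/multiplicity obstructions --- and that switching the sign on a single block disturbs neither the global swapping nor the fixing of $\mathscr{H}_{0}$, which is immediate since every block reflection, of either sign, still interchanges the $\mathscr{M}_{+}$- and $\mathscr{M}_{-}$-lines. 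Combining the two inclusions yields the asserted equality.
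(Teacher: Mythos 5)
Your overall strategy coincides with the paper's: the inclusion $\mathscr{E}(Markov)\subseteq\bigcap_{\theta}\mathscr{E}_{OS}(\theta)$ is quoted from \lemref{MP}/\lemref{MP2}, and the reverse inclusion is proved by contraposition, by manufacturing one $\theta\in\mathscr{R}(\varepsilon)$ that violates O.S.-positivity. Where you differ is in how that $\theta$ is built. The paper picks vectors $h_{\pm}\in\mathscr{H}_{\pm}$ witnessing $E_{+}E_{0}E_{-}\neq E_{+}E_{-}$, arranges $\left\langle h_{+},h_{-}\right\rangle \notin[0,\infty)$, sets $\theta_{0}h_{+}:=h_{-}$ on a one-dimensional subspace, and extends to a full reflection by Zorn's lemma together with \thmref{SU}. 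You instead reduce the failure of the Markov property to non-orthogonality of $\mathscr{M}_{\pm}=\mathscr{H}_{\pm}\ominus\mathscr{H}_{0}$ and build $\theta$ explicitly from the Halmos two-subspace decomposition, flipping the bisector reflection to the anti-bisector reflection on one block. Your single-block computation is correct: on the complex span of unit vectors $a\in\mathscr{M}_{+}$, $b\in\mathscr{M}_{-}$ with $c=\left\langle a,b\right\rangle \neq0$ there are exactly two selfadjoint unitary involutions interchanging $\mathbb{C}a$ and $\mathbb{C}b$, and they give $\left\langle a,\theta a\right\rangle =\pm\left|c\right|$; the sign $-\left|c\right|$ is what kills O.S.-positivity. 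This is more constructive than the paper's Zorn argument, which leaves unverified that the maximal extension is globally involutive and fixes $\mathscr{H}_{0}$.

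There are, however, two gaps. First, your ``standing hypothesis'' $E_{0}\leq E_{\pm}$ is not part of \defref{E}: the theorem quantifies over arbitrary triples of projections, and without the containments the identity $E_{+}\left(I-E_{0}\right)E_{-}=0$ is not equivalent to $\mathscr{M}_{+}\perp\mathscr{M}_{-}$ (indeed $\left(I-E_{0}\right)E_{-}$ is then not even a projection). So as written you prove the theorem only for the probabilistically natural sub-class of triples; to match the stated generality you must either derive the containments from the axioms (they do not follow from $\theta E_{0}=E_{0}$ alone) or run the contrapositive directly from a pair $h_{\pm}$ with $\left\langle h_{+},\left(I-E_{0}\right)h_{-}\right\rangle \neq0$, as the paper attempts. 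Second, the claim that the generic part is an orthogonal sum of two-dimensional blocks is false in general: in infinite dimensions the angle operator of the pair $\left(\mathscr{M}_{+},\mathscr{M}_{-}\right)$ may have continuous spectrum, and then no reducing $2$-dimensional block exists. The repair is standard but must be stated: use the operator form of the CS decomposition, observe that non-orthogonality forces either $\mathscr{M}_{+}\cap\mathscr{M}_{-}\neq0$ (put $-I$ there) or a nonzero spectral subspace of the angle operator on which the cosine is bounded below by some $\delta>0$, and flip the sign of the bisector reflection on that entire spectral subspace. With these two repairs your construction does land in $\mathscr{R}(\varepsilon)$ and completes the reverse inclusion.
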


\begin{rem}
If $\varepsilon$, and $U$ are given as in \secref{GR}, and if $\left(E_{\pm},E_{0},U\right)$
is Markov, then (\ref{eq:m1}) also holds with $\theta$, $U$ satisfying
(\ref{eq:rp2})-(\ref{eq:rp3}). The idea in (\ref{eq:m1}) is that
when a system $\varepsilon$ of projections is fixed as specified
on the RHS in the formula, then on the LHS, we intersect only over
the subset of reflections $\theta$ subordinated to this $\varepsilon$-system.
And similarly when both $\varepsilon$ and $U$ are specified, we
intersect over the smaller set of jointly $\varepsilon$, $U$ subordinated
reflections $\theta$.
\end{rem}

\begin{proof}
We must show that if $\varepsilon:=\left(E_{0},E_{\pm}\right)$ is
given to satisfy the Markov property, i.e., $E_{+}E_{0}E_{-}=E_{+}E_{-}$,
then for all $\theta\in\mathscr{R}\left(\varepsilon\right)$; see
\lemref{EP}. Then by \lemref{MP}, the O.S. property will be automatic.
Now given $\varepsilon$, a reflection $\theta$ may be constructed
via an application of Zorn's lemma to all reflections $\theta$ from
$\mathscr{H}_{+}$ to $\mathscr{H}_{-}$, see (\ref{eq:M4}) below.
Note we can assume that both of the subspaces $\mathscr{H}_{\pm}$
are infinite-dimensional. Hence, to show existence of $\theta\in\mathscr{R}\left(\varepsilon\right)$
as asserted, we must show that, if $\theta$ is initially only defined
on closed subspaces $\mathscr{H}_{+}^{su}\rightarrow\mathscr{H}_{-}^{su}$,
then vectors $h_{\pm}\in\mathscr{H}_{\pm}\ominus\mathscr{H}_{\pm}^{su}$
may be chosen such that $\theta h_{+}=h_{-}$ offers a non-trivial
extension. This is a contradiction since the two subspaces $\mathscr{H}_{\pm}^{su}$
may be chosen maximal by Zorn's lemma. (See also \cite{MR0133686,MR546992}.)

In detail: By \lemref{MP2}, we already have ``$\supseteq$'' in
(\ref{eq:m1}), and we now turn to the other inclusion: 

Given $\left(E_{0},E_{\pm}\right)$, and suppose $\left(E_{0},E_{\pm}\right)\in\mathscr{E}_{OS}\left(\theta\right)$,
$\forall\theta\in\mathscr{R}\left(\varepsilon\right)$. We shall show
that $\left(E_{0},E_{\pm}\right)\in\mathscr{E}\left(Markov\right)$,
i.e., 
\begin{equation}
\bigcap_{\theta\in\mathscr{R}\left(\epsilon\right)}\mathscr{E}_{OS}\left(\theta\right)\subseteq\mathscr{E}\left(Markov\right).\label{eq:M2}
\end{equation}
\emph{Indirect proof of (\ref{eq:M2})}: 

We must prove that if $\left(E_{0},E_{\pm}\right)\notin\mathscr{E}\left(Markov\right)$
then $\exists\theta\in\mathscr{R}\left(\varepsilon\right)$ s.t. $\left(E_{0},E_{\pm}\right)\notin\mathscr{E}_{OS}\left(\theta\right)$. 

Suppose $E_{+}E_{0}E_{-}\neq E_{+}E_{-}$, then $\exists$ $h_{\pm}\neq0$,
$h_{\pm}\in\mathscr{H}_{\pm}$, where $\mathscr{H}_{\pm}:=E_{\pm}\mathscr{H}$,
s.t. 
\begin{equation}
\left\langle h_{+},E_{0}h_{-}\right\rangle \neq\left\langle h_{+},h_{-}\right\rangle ,
\end{equation}
and we may choose these vectors s.t. 
\begin{equation}
\left\langle h_{+},h_{-}\right\rangle \notin[0,\infty).
\end{equation}
See also \thmref{SU}. 

Define $\theta_{0}$ on $\mathbb{C}h_{+}\rightarrow\mathbb{C}h_{-}$
(on 1-dimensional subspaces), $\theta_{0}h_{+}:=h_{-}$; and then
extend it 
\begin{equation}
\theta_{0}\rightarrow\theta:\mathscr{H}_{+}\xrightarrow{\;\theta\;}\mathscr{H}_{-}\:\left(\text{extended}\right),\label{eq:M3}
\end{equation}
to a reflection $\theta$ with initial space $\mathscr{H}_{+}$ and
final space $\mathscr{H}_{-}$, (using again \thmref{SU}) s.t. the
extension $\theta$ satisfies
\[
\theta E_{0}=E_{0},\quad\theta E_{+}=E_{-}\theta E_{+},
\]
i.e., $\left(E_{0},E_{\pm}\right)\in\mathscr{E}\left(\theta\right)$,
see (\ref{eq:M4}). 
\begin{equation}
\begin{matrix}\mathscr{H}_{+}\\
\mathbb{C}h_{+}
\end{matrix}\xymatrix{\text{\boxed{\ensuremath{\begin{matrix}\\
h_{+}\\
\\
\end{matrix}}}}\ar[rrr]_{\theta_{0}}\ar@/^{1.6pc}/[rrr]_{\text{extension}}\ar@/_{1.6pc}/[rrr]_{\theta_{0}\rightarrow\theta} &  &  & \text{\boxed{\ensuremath{\begin{matrix}\\
h_{-}\\
\\
\end{matrix}}}}}
\begin{matrix}\mathscr{H}_{-}\\
\mathbb{C}h_{-}
\end{matrix}\label{eq:M4}
\end{equation}
Then $\left\langle h_{+},\theta h_{+}\right\rangle =\left\langle h_{+},h_{-}\right\rangle \notin[0,\infty)$
by construction, see (\ref{eq:M3})-(\ref{eq:M4}); and so, for this
$\theta$, $\left(E_{0},E_{\pm}\right)\notin\mathscr{E}_{OS}\left(\theta\right)$,
and (\ref{eq:M2}) is proved.
\end{proof}
\begin{example}[Markov property]
\label{exa:MP}Let $\mathscr{H}=L^{2}\left(\Omega,\mathscr{F},\mathbb{P}\right)$,
where 
\begin{itemize}
\item $\Omega$: sample space;
\item $\mathscr{F}$: total information;
\item $\mathscr{F}_{-}$: information from the past (or inside);
\item $\mathscr{F}_{+}$: information from the future (predictions), or
from the outside; 
\item $\mathscr{F}_{0}$: information at the present.
\end{itemize}
Let $\mathbb{E}\left(\cdot\mid\mathscr{F}_{0}\right)$, $\mathbb{E}\left(\cdot\mid\mathscr{F}_{\pm}\right)$
be the corresponding conditional expectations, and the Markov property
(\ref{eq:e5}) then takes the form $\mathbb{E}_{0}\mathscr{H}=\mathscr{H}_{0}$,
$\mathbb{E}_{\pm}\mathscr{H}=\mathscr{H}_{\pm}$. 

The Markov process is a probability system: 
\begin{equation}
\mathbb{E}\left(\mathbb{E}\left(\psi_{+}\mid\mathscr{F}_{-}\right)\mid\mathscr{F}_{0}\right)=\mathbb{E}\left(\psi_{+}\mid\mathscr{F}_{-}\right),\label{eq:mk1}
\end{equation}
for $\forall\psi_{+}$ (random variables conditioned by $\mathscr{F}_{+}$
= the future); or, if $\mathscr{F}_{0}\subseteq\mathscr{F}_{-}$,
it simplifies to: 
\begin{equation}
\mathbb{E}\left(\psi_{+}\mid\mathscr{F}_{-}\right)=\mathbb{E}\left(\psi_{+}\mid\mathscr{F}_{0}\right),\;\forall\psi_{+}\in\mathscr{H}_{+}.\label{eq:mk2}
\end{equation}
For more details on this point, see \secref{MP} below.
\end{example}

\begin{question*}
Do we have analogies of O.S.-positivity (see (\ref{eq:e8})) in the
free probability setting? That is, in the setting of free probability
and non-commuting random variables. 
\end{question*}

\section{\label{sec:ns}A model for reflection symmetry}

This is a following up on a result in \cite{MR1895530}, and we offer
the following analysis as reflection operators $\theta$: Let $\mathscr{H}_{i}$,
$i=1,2$, be a pair of Hilbert spaces; we shall assume that they are
both separable and infinite dimensional. Set $\mathscr{H}=\left(\begin{smallmatrix}\underset{\bigoplus}{\mathscr{H}_{1}}\\
\mathscr{H}_{2}
\end{smallmatrix}\right)$, i.e., column vectors. The Hilbert norm in $\mathscr{H}$ is the
usual one:
\begin{equation}
\left\Vert \begin{pmatrix}h_{1}\\
h_{2}
\end{pmatrix}\right\Vert _{\mathscr{H}}^{2}=\left\Vert h_{1}\right\Vert _{\mathscr{H}_{1}}^{2}+\left\Vert h_{2}\right\Vert _{\mathscr{H}_{2}}^{2}.
\end{equation}
Set 
\begin{equation}
\theta=\begin{pmatrix}1 & 0\\
0 & -1
\end{pmatrix},\label{eq:m2}
\end{equation}
more precisely, $\theta\left(h_{1}\oplus h_{2}\right)=h_{1}\oplus\left(-h_{2}\right)$. 
\begin{thm}
Let $\mathscr{H}_{i}$, $i=1,2$, and $\theta$ be as above. A system
$E_{0}$, $E_{\pm}$ with subspaces $\mathscr{H}_{0}$, $\mathscr{H}_{\pm}$
in $\mathscr{H}$, satisfies the O.S.-condition $\left\langle h_{+},\theta h_{+}\right\rangle _{\mathscr{H}}\geq0$,
$\forall h_{+}\in\mathscr{H}_{+}$, if and only if there is a \emph{contractive}
linear operator $C:\mathscr{H}_{1}\rightarrow\mathscr{H}_{2}$ such
that $\mathscr{H}_{1}=Graph\left(C\right)$, $\mathscr{H}_{-}=Graph\left(-C\right)$,
and  
\begin{equation}
\mathscr{H}_{0}=\begin{pmatrix}\underset{\bigoplus}{\mathscr{H}_{1}}\\
\mathbf{0}
\end{pmatrix}=\left\{ \begin{pmatrix}h_{1}\\
0
\end{pmatrix}\mathrel{;}h_{1}\in\mathscr{H}_{1}\right\} .
\end{equation}
\end{thm}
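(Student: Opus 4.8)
The plan is to read the statement off the general graph characterization of \secref{GR}, specialized to the block-diagonal reflection $\theta=\left(\begin{smallmatrix}1&0\\0&-1\end{smallmatrix}\right)$. First I would identify the spectral subspaces of $\theta$: writing $\theta=2P-I_{\mathscr{H}}$ as in \lemref{EP0}, here $P=\left(\begin{smallmatrix}1&0\\0&0\end{smallmatrix}\right)$, so the $+1$ eigenspace is $P\mathscr{H}=\mathscr{H}_1\oplus\mathbf 0$ and the $-1$ eigenspace is $P^{\perp}\mathscr{H}=\mathbf 0\oplus\mathscr{H}_2$, which I identify isometrically with $\mathscr{H}_1$ and $\mathscr{H}_2$. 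Under this identification a contraction $C:P\mathscr{H}\to P^{\perp}\mathscr{H}$ is exactly a contraction $C:\mathscr{H}_1\to\mathscr{H}_2$, and the graph $\{x+Cx\}$ appearing in \corref{EP1} and \thmref{SU} becomes $Graph(C)=\{\left(\begin{smallmatrix}h_1\\ Ch_1\end{smallmatrix}\right):h_1\in\mathscr{H}_1\}$.

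For the (easy) converse direction I would compute directly. For $h_+=\left(\begin{smallmatrix}h_1\\ Ch_1\end{smallmatrix}\right)\in Graph(C)$ one has $\theta h_+=\left(\begin{smallmatrix}h_1\\ -Ch_1\end{smallmatrix}\right)$, hence
\[
\left\langle h_+,\theta h_+\right\rangle_{\mathscr{H}}=\left\Vert h_1\right\Vert_{\mathscr{H}_1}^2-\left\Vert Ch_1\right\Vert_{\mathscr{H}_2}^2\ge0,
\]
precisely because $C$ is contractive; this is the specialization of \lemref{CP}. The reflected space is $\theta\,Graph(C)=Graph(-C)$, which gives $\mathscr{H}_-$, and $\mathscr{H}_0=P\mathscr{H}=\mathscr{H}_1\oplus\mathbf 0$ is the fixed-point space of $\theta$, so that $\theta E_0=E_0$ holds as demanded by \lemref{EP0}.

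For the forward direction I would argue that O.S.-positivity forces $\mathscr{H}_+$ to be a graph over $\mathscr{H}_1$. The key observation is injectivity of the coordinate projection onto $\mathscr{H}_1$: if $\left(\begin{smallmatrix}0\\ h_2\end{smallmatrix}\right)\in\mathscr{H}_+$, then $\left\langle\cdot,\theta\cdot\right\rangle=-\left\Vert h_2\right\Vert^2\ge0$ forces $h_2=0$, so $P|_{\mathscr{H}_+}$ is one-to-one and $\mathscr{H}_+$ is the graph of a well-defined linear operator $C$ on $P\mathscr{H}_+\subseteq\mathscr{H}_1$; the computation above, combined with \lemref{EP1} and \lemref{QA}, shows $C$ is contractive. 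To conclude that the domain is all of $\mathscr{H}_1$ and that $\mathscr{H}_+$ is the full graph, I would invoke \thmref{SU}, whose maximal elements are precisely the $\mathscr{H}_+(P,C)=\{x+Cx:x\in P\mathscr{H}\}$ with $C$ contractive on all of $P\mathscr{H}$, together with the contractive-extension step used there.

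The one genuine subtlety, and the step I expect to be the main obstacle, is bridging the gap between ``$\mathscr{H}_+$ is contained in a graph'' (what \lemref{EP1} yields for an arbitrary O.S.-positive subspace) and ``$\mathscr{H}_+$ equals $Graph(C)$ with $C$ defined on all of $\mathscr{H}_1$'' (what the statement asserts, together with the closed-form descriptions of $\mathscr{H}_-$ and $\mathscr{H}_0$). I would make explicit that the theorem concerns the maximal, fully-defined configuration: restricting to maximal $\mathscr{H}_+$, or applying the contractive extension of \thmref{SU}, is exactly what upgrades the partially defined $C$ to a contraction on all of $\mathscr{H}_1$ and pins down $\mathscr{H}_-=Graph(-C)$ and $\mathscr{H}_0=\mathscr{H}_1\oplus\mathbf 0$. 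Everything else is routine $2\times2$ block bookkeeping.
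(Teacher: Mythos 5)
Your computation for the ``if'' direction coincides with the one the paper actually gives: for $h_{+}=\bigl(\begin{smallmatrix}h_{1}\\ Ch_{1}\end{smallmatrix}\bigr)$ one gets $\left\langle h_{+},\theta h_{+}\right\rangle =\left\Vert h_{1}\right\Vert ^{2}-\left\Vert Ch_{1}\right\Vert ^{2}\geq0$ precisely when $C$ is contractive. For the ``only if'' direction the paper offers no argument at all --- it writes that ``one checks that the converse implication holds as well'' and defers to an external reference --- so your graph argument (injectivity of $P|_{\mathscr{H}_{+}}$ from $-\left\Vert h_{2}\right\Vert ^{2}\geq0$, then contractivity via \lemref{QA} and \lemref{EP1}) is a genuine supplement rather than a reproduction. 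More importantly, the obstacle you flag is real, and it is a gap in the statement rather than in your proof: the paper's own \remref{3d} exhibits a one-dimensional O.S.-positive $\mathscr{H}_{+}$ in $\mathbb{C}^{3}$ properly contained in $\left\{ x+Cx\right\} $, which in the block notation of this section is exactly a counterexample to the literal ``only if'' claim; and the O.S.-condition by itself constrains neither $\mathscr{H}_{-}$ nor $\mathscr{H}_{0}$, so the conclusions $\mathscr{H}_{-}=Graph\left(-C\right)$ and $\mathscr{H}_{0}=\mathscr{H}_{1}\oplus\mathbf{0}$ can only follow once one adds the standing hypotheses $\theta\mathscr{H}_{+}=\mathscr{H}_{-}$ and $\theta E_{0}=E_{0}$ together with maximality of $\mathscr{H}_{+}$ in $Sub_{OS}\left(\theta\right)$, exactly the repair you propose via \thmref{SU}. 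In short: your proof of what is provable is correct and more complete than the paper's own, and the maximality caveat you raise is the right one; it should be made an explicit hypothesis of the theorem.
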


\begin{proof}
We refer to \cite{MR1895530} for details, but the easy implication
is as follows: Given $C:\mathscr{H}_{1}\rightarrow\mathscr{H}_{2}$,
and $\theta$ be as in (\ref{eq:m2}), then 
\[
\left\langle \begin{pmatrix}h_{1}\\
Ch_{1}
\end{pmatrix},\theta\begin{pmatrix}h_{1}\\
Ch_{1}
\end{pmatrix}\right\rangle _{\mathscr{H}}=\left\Vert h_{1}\right\Vert _{\mathscr{H}_{1}}^{2}-\left\Vert Ch_{1}\right\Vert _{\mathscr{H}_{2}}^{2}\geq0
\]
iff $C$ is contractive. One checks that the converse implication
holds as well. 
\end{proof}
\begin{thm}
Let $\mathscr{H}=\left(\begin{smallmatrix}\underset{\bigoplus}{\mathscr{H}_{1}}\\
\mathscr{H}_{2}
\end{smallmatrix}\right)$ be as above, and let $C:\mathscr{H}_{1}\rightarrow\mathscr{H}_{2}$
be a contraction. Set 
\begin{alignat}{2}
\mathscr{H}_{+} & =Graph\left(C\right), &  & \mathscr{H}_{-}=Graph\left(-C\right),\label{eq:m4}\\
\mathscr{H}_{0} & =\begin{pmatrix}\underset{\bigoplus}{\mathscr{H}_{1}}\\
\mathbf{0}
\end{pmatrix},\;\text{and} & \quad & E_{0}=\begin{pmatrix}1 & 0\\
0 & 0
\end{pmatrix}.\label{eq:m5}
\end{alignat}
Then the Markov property 
\begin{equation}
E_{+}E_{0}E_{-}=E_{+}E_{-}\label{eq:m6}
\end{equation}
holds if and only if $C=0$. 
\end{thm}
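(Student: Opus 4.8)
The plan is to reduce the Markov identity (\ref{eq:m6}) to an explicit block-matrix computation. Since $\mathscr{H}_{+}=\mathrm{Graph}\left(C\right)$ and $\mathscr{H}_{-}=\mathrm{Graph}\left(-C\right)$ are graphs of the contractions $\pm C:\mathscr{H}_{1}\rightarrow\mathscr{H}_{2}$, I would first write down the orthogonal projections onto them. Setting $D:=\left(I_{\mathscr{H}_{1}}+C^{*}C\right)^{-1}$, a bounded positive injective operator on $\mathscr{H}_{1}$ with $\left\Vert D\right\Vert \leq1$ (since $I+C^{*}C\geq I$ is invertible with bounded inverse), the standard least-squares formula for the projection onto the graph of a bounded operator gives
\begin{equation}
E_{+}=\begin{pmatrix}D & DC^{*}\\ CD & CDC^{*}\end{pmatrix},\qquad E_{-}=\begin{pmatrix}D & -DC^{*}\\ -CD & CDC^{*}\end{pmatrix},\qquad E_{0}=\begin{pmatrix}I & 0\\ 0 & 0\end{pmatrix}.
\end{equation}
The formula for $E_{-}$ comes from that for $E_{+}$ by replacing $C$ with $-C$, noting that $\left(-C\right)^{*}\left(-C\right)=C^{*}C$ leaves $D$ unchanged.

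Next I would compute the two sides of (\ref{eq:m6}) as block matrices. A direct multiplication, using only $E_{0}=\mathrm{diag}\left(I,0\right)$ for the left-hand side, yields
\begin{equation}
E_{+}E_{0}E_{-}=\begin{pmatrix}D^{2} & -D^{2}C^{*}\\ CD^{2} & -CD^{2}C^{*}\end{pmatrix},\qquad E_{+}E_{-}=\begin{pmatrix}D\left(I-C^{*}C\right)D & -D\left(I-C^{*}C\right)DC^{*}\\ CD\left(I-C^{*}C\right)D & -CD\left(I-C^{*}C\right)DC^{*}\end{pmatrix}.
\end{equation}
The whole theorem then turns on comparing these two matrices; in fact, for the forward direction it already suffices to compare the $(1,1)$ blocks, since $E_{+}E_{0}E_{-}=E_{+}E_{-}$ forces $D^{2}=D\left(I-C^{*}C\right)D$.

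The crux is the cancellation in this last identity, and it is the one step to handle with care. Because $D^{-1}=I+C^{*}C$ is everywhere defined and bounded, I would multiply $D^{2}=D\left(I-C^{*}C\right)D$ on both the left and the right by $D^{-1}$, obtaining $I=I-C^{*}C$, i.e. $C^{*}C=0$; then $\left\langle C^{*}Ch_{1},h_{1}\right\rangle =\left\Vert Ch_{1}\right\Vert ^{2}$ gives $C=0$. The main (if short) obstacle is simply to be certain the projection formula is correct and that $D$ is genuinely invertible, so that this two-sided cancellation is legitimate; once that is secured the implication is immediate. For the converse, if $C=0$ then $D=I$ and $\mathscr{H}_{+}=\mathscr{H}_{-}=\mathscr{H}_{0}$, so $E_{+}=E_{-}=E_{0}$ and both sides of (\ref{eq:m6}) equal $E_{0}$; equivalently, substituting $C=0$ into the two displayed block matrices makes them coincide. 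Hence (\ref{eq:m6}) holds if and only if $C=0$.
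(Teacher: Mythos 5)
Your proposal is correct and follows essentially the same route as the paper: both write the projections onto $\mathrm{Graph}(\pm C)$ via the characteristic (block) matrix with $D=(I+C^{*}C)^{-1}$, compute $E_{+}E_{0}E_{-}$ and $E_{+}E_{-}$ explicitly, and reduce the Markov identity to the vanishing of $DC^{*}CD$ (the paper phrases this as $P_{12}P_{21}=0$ with $P_{21}=P_{12}^{*}$), which forces $C=0$. Your two-sided cancellation by $D^{-1}=I+C^{*}C$ is legitimate and matches the paper's conclusion.
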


\begin{proof}
Let $E_{\pm}$ be the projections corresponding to the two subspaces
$\mathscr{H}_{\pm}$ in (\ref{eq:m4}). One checks that
\begin{equation}
E_{+}=\begin{pmatrix}\left(1+C^{*}C\right)^{-1} & \left(1+C^{*}C\right)^{-1}C^{*}\\
C\left(1+C^{*}C\right)^{-1} & 1-\left(1+CC^{*}\right)^{-1}
\end{pmatrix},\label{eq:m7}
\end{equation}
and 
\begin{equation}
E_{-}=\begin{pmatrix}\left(1+C^{*}C\right)^{-1} & -\left(1+C^{*}C\right)^{-1}C^{*}\\
-C\left(1+C^{*}C\right)^{-1} & 1-\left(1+CC^{*}\right)^{-1}
\end{pmatrix}.\label{eq:m8}
\end{equation}
In abbreviated form we have 
\[
E_{+}=\begin{pmatrix}P_{11} & P_{12}\\
P_{21} & P_{22}
\end{pmatrix}
\]
with the operator entries as specified in (\ref{eq:m7}), and so 
\[
E_{-}=\begin{pmatrix}P_{11} & -P_{12}\\
-P_{21} & P_{22}
\end{pmatrix}.
\]
A further computation yields
\[
E_{+}E_{0}E_{-}=\begin{pmatrix}P_{11}^{2} & -P_{11}P_{12}\\
P_{21}P_{11} & -P_{21}P_{12}
\end{pmatrix},
\]
and 
\[
E_{+}E_{-}=\begin{pmatrix}P_{11}^{2}-P_{12}P_{21} & -P_{11}P_{12}+P_{12}P_{22}\\
P_{21}P_{11}-P_{22}P_{21} & -P_{21}P_{12}+P_{22}^{2}
\end{pmatrix}.
\]

Hence the Markov property (\ref{eq:m6}) holds iff $P_{12}P_{21}=0$.
Note that $P_{21}=P_{12}^{*}$. Using the operator entries from (\ref{eq:m7})-(\ref{eq:m8}),
we conclude that (\ref{eq:m6}) holds iff $C=0$, in which case $E_{+}=E_{-}=E_{0}$,
where $E_{0}$ is as in (\ref{eq:m5}). 
\end{proof}
\begin{rem}
The matrix $E_{+}$ (i.e., the characteristic matrix of $C$) from
(\ref{eq:m7}) is obtained as follows: 

Let $\begin{pmatrix}x\\
y
\end{pmatrix}\in\mathscr{H}$, then 

\begin{gather}
\begin{pmatrix}P_{11} & P_{12}\\
P_{21} & P_{22}
\end{pmatrix}\begin{pmatrix}x\\
y
\end{pmatrix}=\begin{pmatrix}P_{11}x+P_{12}y\\
P_{21}x+P_{22}y
\end{pmatrix}\in Graph\left(C\right),\nonumber \\
\Updownarrow\nonumber \\
C:\left(P_{11}x+P_{12}y\right)\longmapsto P_{21}x+P_{22}y,\nonumber \\
\Updownarrow\nonumber \\
CP_{11}=P_{21},\quad CP_{12}=P_{22}.\label{eq:EP1}
\end{gather}
On the other hand, $E_{+}^{\perp}=I-E_{+}$ is the projection from
$\mathscr{H}$ onto $V\left(Graph\left(C^{*}\right)\right)$, where
$V:=\begin{pmatrix}0 & -1\\
1 & 0
\end{pmatrix}$. It follows that, 
\begin{gather}
C^{*}:P_{21}x-(1-P_{22})y\longmapsto(1-P_{11})x-P_{12}y,\nonumber \\
\Updownarrow\nonumber \\
C^{*}P_{21}=1-P_{11},\quad C^{*}(1-P_{22})=P_{12}.\label{eq:EP2}
\end{gather}
Solving (\ref{eq:EP1})-(\ref{eq:EP2}), we get $E_{+}$ as in (\ref{eq:m7}).
\end{rem}

\section{\label{sec:MP}Markov processes and Markov reflection positivity}

In the above, we considered systems $\mathscr{H}$, $E_{0}$, $E_{\pm}$,
$\theta$, and $U$, where $\mathscr{H}$ is a fixed Hilbert space;
$E_{0}$, $E_{\pm}$ are then three given projections in $\mathscr{H}$,
$\theta$ is a reflection, and $U$ is a unitary representation of
a Lie group $G$. 

The axioms for the system are as follows:
\begin{enumerate}
\item $\theta E_{0}=E_{+}$;
\item $E_{+}\theta E_{-}=\theta E_{-}$;
\item $E_{-}\theta E_{+}=\theta E_{+}$;
\item the O.S.-positivity holds, i.e., 
\begin{equation}
E_{+}\theta E_{+}\ge0;\label{eq:mp1}
\end{equation}
\item $\theta U\theta=U^{*}$, or $\theta U\left(g\right)\theta=U(g^{-1})$.
\end{enumerate}
It is further assumed that, for some sub-semigroup $S\subset G$,
we have $U\left(s\right)\mathscr{H}_{+}\subset\mathscr{H}_{+}$, $\forall s\in S$;
or equivalently, 
\begin{equation}
E_{+}U\left(s\right)E_{+}=U\left(s\right)E_{+},\;s\in S.
\end{equation}
From \secref{ns}, it is clear that the additional Markov-restriction
\begin{equation}
E_{+}E_{0}E_{-}=E_{+}E_{-}\label{eq:mp3}
\end{equation}
is ``very'' strong. Moreover, if $\theta$ is fixed, we saw that
(\ref{eq:mp3}) $\Longrightarrow$ (\ref{eq:mp1}) (see \lemref{MP}).

Here we note that (\ref{eq:mp3}) holds in a natural setting of path
space analysis: 

\subsection{\label{subsec:PS}Probability Spaces}

By a probability space we mean a triple $\left(\Omega,\mathscr{F},\mathbb{P}\right)$,
where $\Omega$ is a set (the sample space), $\mathscr{F}$ is a $\sigma$-algebra
of subsets (information), and $\mathbb{P}$ is a probability measure
defined on $\mathscr{F}$. Measurable functions $\psi$ on $\left(\Omega,\mathscr{F}\right)$
are called \emph{random variables}. If $\psi$ is a random variable
in $L^{2}\left(\Omega,\mathscr{F},\mathbb{P}\right)$, we say that
it has finite second moment. An indexed family of random variables
is called a \emph{stochastic process}, or a \emph{random field}.

Let $\left(\Omega,\mathscr{F},\mathbb{P}\right)$ be a fixed probability
space. The expectation will be denoted 
\begin{equation}
\mathbb{E}\left(\psi\right)=\int_{\Omega}\psi\,d\mathbb{P},\label{eq:mp4}
\end{equation}
if $\psi$ is a given random variable on $\left(\Omega,\mathscr{F},\mathbb{P}\right)$.

We shall be primarily interested in the $L^{2}\left(\Omega,\mathscr{F},\mathbb{P}\right)$
setting. 

If $\psi$ is a random variable (or a random field) then 
\begin{equation}
\psi^{-1}\left(\mathscr{B}\right)\subseteq\mathscr{F},\label{eq:rv1}
\end{equation}
where $\mathscr{B}$ is the Borel $\sigma$-algebra of subsets of
$\mathbb{R}$. 

For every sub-$\sigma$-algebra $\mathscr{G}\subset\mathscr{F}$,
there is a unique conditional expectation
\begin{equation}
\mathbb{E}\left(\cdot\mid\mathscr{G}\right):L^{2}\left(\Omega,\mathscr{F},\mathbb{P}\right)\longrightarrow L^{2}\left(\Omega,\mathscr{F},\mathbb{P}\right).\label{eq:rv2}
\end{equation}
In fact $\mathscr{G}$ defines a closed subspace in $L^{2}\left(\Omega,\mathscr{F},\mathbb{P}\right)$,
the closed span of the indicator functions $\left\{ \chi_{S}\mathrel{;}S\in\mathscr{G}\right\} $,
and $\mathbb{E}\left(\cdot\mid\mathscr{G}\right)$ in (\ref{eq:rv2})
will then be the projection onto this subspace. 

If $\mathscr{G}\subset\mathscr{F}$ is as in (\ref{eq:rv1}) then,
for random variables $\psi_{1}\in L^{2}\left(\mathscr{G},\mathbb{P}\right)$,
and $\psi_{2}\in L^{2}\left(\mathscr{F},\mathbb{P}\right)$, we have
\begin{equation}
\mathbb{E}\left(\psi_{1}\psi_{2}\right)=\mathbb{E}\left(\psi_{1}\mathbb{E}\left(\psi_{2}\mid\mathscr{G}\right)\right).
\end{equation}
If $\psi_{1}$ is also in $L^{\infty}\left(\mathscr{G},\mathbb{P}\right)$,
then 
\begin{equation}
\mathbb{E}\left(\psi_{1}\psi_{2}\mid\mathscr{G}\right)=\psi_{1}\mathbb{E}\left(\psi_{2}\mid\mathscr{G}\right).
\end{equation}

The following property is immediate from this: If $\mathscr{G}_{i}$,
$i=1,2$, are two sub-$\sigma$-algebras with $\mathscr{G}_{1}\subseteq\mathscr{G}_{2}$,
then for all $\psi\in L^{2}\left(\Omega,\mathscr{F},\mathbb{P}\right)$
we have 
\begin{equation}
\mathbb{E}\left(\mathbb{E}\left(\psi\mid\mathscr{G}_{2}\right)\mid\mathscr{G}_{1}\right)=\mathbb{E}\left(\psi\mid\mathscr{G}_{1}\right).
\end{equation}
Indeed, this is immediate from the equivalences in \defref{OP}.

Let $\left\{ \psi_{t}\right\} _{t\in\mathbb{R}}$ be a random process
in the given probability space $\left(\Omega,\mathscr{F},\mathbb{P}\right)$.
For $t\in\mathbb{R}$, set $\mathscr{F}_{t}:=$ the $\sigma$-algebra
$\left(\subseteq\mathscr{F}\right)$ generated by the random variables
$\left\{ \psi_{s}\mathrel{;}s\leq t\right\} $. When $t$ is fixed,
we set $\mathscr{B}_{t}:=$ the $\sigma$-algebra generated by the
random variable $\psi_{t}$. We say that $\left\{ \psi_{t}\right\} _{t\in\mathbb{R}}$
is a \emph{Markov-process} iff (Def.), for every $t>s$, and every
measurable function $f$, we have 
\begin{equation}
\mathbb{E}\left(f\circ\psi_{t}\mid\mathscr{F}_{s}\right)=\mathbb{E}\left(f\circ\psi_{t}\mid\mathscr{B}_{s}\right)\label{eq:pm1}
\end{equation}
where $\mathbb{E}\left(\cdot\mid\mathscr{F}_{s}\right)$, and $\mathbb{E}\left(\cdot\mid\mathscr{B}_{s}\right)$,
refer to the corresponding conditional expectations. It is well known
that the Markov property is equivalent to the following \emph{semigroup
property}:

Set 
\begin{equation}
\left(S_{t}f\right)\left(x\right):=\mathbb{E}\left(f\circ\psi_{t}\mid\psi_{0}=x\right),\label{eq:pm2}
\end{equation}
then, for all $t,s\geq0$, we have 
\begin{equation}
S_{t+s}=S_{t}S_{s}.\label{eq:pm3}
\end{equation}
So the semigroup law (\ref{eq:pm3}) holds if and only if the Markov
property (\ref{eq:pm1}) holds.

In order to make a direct comparison with the present Markov property
from \corref{EP3}, it is convenient to restrict attention to stationary
processes; and we now turn to the details of that below. 

\subsection{The covariance operator}

Now let $V$ be a real vector space; and assume that it is also a
LCTVS, locally convex topological vector space. Let $G$ be a Lie
group, $U$ a unitary representation of $G$; and let $\left\{ \psi_{v,g}\right\} _{\left(v,g\right)\in V\times G}$
be a real valued stochastic process s.t. $\psi_{v,g}\in\mathscr{H}=L^{2}\left(\Omega,\mathscr{F},\mathbb{P}\right)$,
and 
\begin{equation}
\mathbb{E}\left(\psi_{v,g}\right)=0,\;\left(v,g\right)\in V\times G.
\end{equation}
We further assume that a reflection $\theta$ is given, and that 
\begin{equation}
\theta\left(\psi_{v,g}\right)=\psi_{v,g^{-1}},\;\left(v,g\right)\in V\times G.
\end{equation}

Let $\left(v_{i},g_{i}\right)$, $i=1,2$, be given, and set 
\begin{equation}
\mathbb{E}\left(\psi_{v_{1},g_{1}}\psi_{v_{2},g_{2}}\right)=\left\langle v_{1},r\left(g_{1},g_{2}\right)v_{2}\right\rangle \label{eq:mp7}
\end{equation}
where $\left\langle \cdot,\cdot\right\rangle $ is a fixed positive
definite Hermitian inner product on $V$. Hence (\ref{eq:mp7}) determines
a function $r$ on $G\times G$; it is operator valued, taking values
in operators in $V$. This function is called the \emph{covariance
operator}.

To sketch the setting for the Markov property (\ref{eq:mp3}), we
shall make two specializations (these may be removed!):
\begin{enumerate}
\item[(i)] $G=\mathbb{R}$, $S=\mathbb{R}_{+}\cup\left\{ 0\right\} =[0,\infty)$,
and 
\item[(ii)] the process is stationary; i.e., referring to (\ref{eq:mp7}) we
assume that the covariance operator $r$ is as follows: 
\begin{equation}
\mathbb{E}\left(\psi_{v_{1},t_{1}}\psi_{v_{2},t_{2}}\right)=\left\langle v_{1},r\left(t_{1}-t_{2}\right)v_{2}\right\rangle ,\label{eq:mp8}
\end{equation}
$\forall t_{1},t_{2}\in\mathbb{R}$, $\forall v_{1},v_{2}\in V$. 
\end{enumerate}
In this case, the O.S.-condition (\ref{eq:mp1}) is considered for
the following three sub-$\sigma$-algebras $\mathscr{A}_{0}$, $\mathscr{A}_{\pm}$
in $\mathscr{F}$: 
\begin{enumerate}[label=]
\item $\mathscr{A}_{0}=$ the $\sigma$-algebra generated by $\left\{ \psi_{v,0}\right\} _{v\in V}$, 
\item $\mathscr{A}_{+}=$ the $\sigma$-algebra generated by $\left\{ \psi_{v,t}\right\} _{v\in V,t\in[0,\infty)}$,
and
\item $\mathscr{A}_{-}=$ the $\sigma$-algebra generated by $\left\{ \psi_{v,t}\right\} _{v\in V,t\in(-\infty,0]}$.
\end{enumerate}
The corresponding conditional expectations will be denoted as follows:
\begin{equation}
\begin{split}\mathbb{E}_{0}\left(\psi\right) & =\mathbb{E}\left(\psi\mid\mathscr{A}_{0}\right),\;\text{and}\\
\mathbb{E}_{\pm}\left(\psi\right) & =\mathbb{E}\left(\psi\mid\mathscr{A}_{\pm}\right).
\end{split}
\label{eq:mp9}
\end{equation}
The corresponding closed subspaces in $\mathscr{H}=L^{2}\left(\Omega,\mathscr{F},\mathbb{P}\right)$
will be denoted $\mathscr{H}_{0}$, $\mathscr{H}_{\pm}$, respectively,
and we shall consider the positivity conditions (\ref{eq:mp1}) O.S.-p,
and (\ref{eq:mp3}) Markov, in this context. 

Translating a theorem in \cite{MR0496171}, we arrive at the following: 
\begin{thm}[A. Klein \cite{MR0496171}]
Let the stationary stochastic process $\left\{ \psi_{v,t}\right\} $,
$\left(v,t\right)\in V\times\mathbb{R}$, be as specified above, and
let $\left\{ r\left(t\right)\right\} _{t\in\mathbb{R}}$ be the covariance
operator. Set $\theta\left(\psi_{v,t}\right):=\psi_{v,-t}$, $t\in\mathbb{R}$.
Assume $\left\langle \psi_{+},\theta\psi_{+}\right\rangle \geq0$,
$\forall\psi_{+}\in\mathscr{H}_{+}$, then for $\forall n\in\mathbb{N}$,
$\forall\left\{ v_{i}\right\} _{i=1}^{n}\subset V$, $\forall\left\{ t_{i}\right\} _{i=1}^{n}\subset\mathbb{R}_{+}\cup\left\{ 0\right\} $,
we have 
\begin{equation}
\sum_{i}\sum_{j}\left\langle v_{i},r\left(t_{i}+t_{j}\right)v_{j}\right\rangle \geq0;\label{eq:mp10}
\end{equation}
which is the O.S.-positivity condition. 

Moreover, the Markov property $E_{+}E_{0}E_{-}=E_{+}E_{-}$ holds
iff $r\left(\cdot\right)$ is a semigroup, i.e., 
\begin{equation}
r\left(t+s\right)=r\left(t\right)r\left(s\right),\label{eq:mp11}
\end{equation}
for $\forall s,t\in[0,\infty)$. 

In particular, in the case of stationary processes, when O.S.-positivity
is assumed, then two conditions hold:
\begin{enumerate}
\item the covariance function $r\left(\cdot\right)$ is positive definite:
\[
\sum_{i}\sum_{j}\left\langle v_{i},r\left(t_{i}-t_{j}\right)v_{j}\right\rangle \geq0;\;\text{and}
\]
\item condition (\ref{eq:mp10}) holds as well. 
\end{enumerate}
\end{thm}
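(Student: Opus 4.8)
The plan is to translate every statement about $\sigma$-algebras and conditional expectations into the language of the covariance operator $r$, using the defining identity (\ref{eq:mp8}), $\mathbb{E}(\psi_{v_1,t_1}\psi_{v_2,t_2})=\langle v_1,r(t_1-t_2)v_2\rangle$, together with $\theta\psi_{v,t}=\psi_{v,-t}$. The first assertion (the O.S.\ inequality (\ref{eq:mp10})) is then immediate: given $\{(v_i,t_i)\}$ with all $t_i\geq 0$, set $\Psi:=\sum_i\psi_{v_i,t_i}$, so that $\Psi\in\mathscr{H}_+$, and compute
\[
\langle\Psi,\theta\Psi\rangle=\sum_{i,j}\langle\psi_{v_i,t_i},\psi_{v_j,-t_j}\rangle=\sum_{i,j}\langle v_i,r(t_i+t_j)v_j\rangle .
\]
Since $\langle\psi_+,\theta\psi_+\rangle\geq 0$ by hypothesis, (\ref{eq:mp10}) follows at once. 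For the final ``in particular'' clause, positive-definiteness of $r(\cdot)$ needs no hypothesis at all, being automatic from $\sum_{i,j}\langle v_i,r(t_i-t_j)v_j\rangle=\mathbb{E}\big((\sum_i\psi_{v_i,t_i})^2\big)\geq 0$; and the second listed condition is just (\ref{eq:mp10}) restated.

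The substantive part is the equivalence of the Markov property (\ref{eq:mp3}) with the semigroup law (\ref{eq:mp11}). First I would normalize the inner product on $V$ so that $r(0)=I$, making $v\mapsto\psi_{v,0}$ an isometry of $V$ onto its copy in $\mathscr{H}_0$. The key computation is the action of $E_0$ on a past vector: using $r(-s)=r(s)^*$ (a consequence of stationarity and reality of the process), testing $E_0\psi_{v,-s}=\psi_{w,0}$ against each $\psi_{z,0}$ via $\langle\psi_{w,0},\psi_{z,0}\rangle=\langle\psi_{v,-s},\psi_{z,0}\rangle$ forces $w=r(s)v$, so that
\[
E_0\,\psi_{v,-s}=\psi_{r(s)v,\,0},\qquad s\geq 0 .
\]
I would then test the operator identity $E_+E_0E_-=E_+E_-$ against generators $\psi_{u,t}\in\mathscr{H}_+$ ($t\geq 0$) and $\psi_{v,-s}\in\mathscr{H}_-$ ($s\geq 0$). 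Since $E_\pm$ fix these vectors, the identity collapses to $\langle\psi_{u,t},E_0\psi_{v,-s}\rangle=\langle\psi_{u,t},\psi_{v,-s}\rangle$, i.e.
\[
\langle u,r(t)r(s)v\rangle=\langle u,r(t+s)v\rangle,\qquad\forall\,u,v\in V ,
\]
which as $u,v$ range over $V$ is precisely $r(t+s)=r(t)r(s)$ for $s,t\geq 0$. Reading the chain of equalities in either direction yields both implications.

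The main obstacle I anticipate is a \emph{domain} issue: the identities above are derived on the first chaos, the closed linear span of the $\psi_{v,t}$, whereas the Markov property (\ref{eq:mp3}) and the projections $E_0,E_\pm$ are defined on all of $L^2(\Omega,\mathscr{F},\mathbb{P})$ as the conditional expectations onto the $\sigma$-algebras $\mathscr{A}_0,\mathscr{A}_\pm$ (see \subsecref{PS} and \exaref{MP}). Thus the genuinely nontrivial step is to justify that $E_0$ maps the first chaos into itself and that the $\sigma$-algebra Markov property is equivalent to its first-chaos geometric form $E_+^{(1)}E_0^{(1)}E_-^{(1)}=E_+^{(1)}E_-^{(1)}$. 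This is exactly where the Gaussian structure enters: for a jointly Gaussian family, conditional expectation onto a generated $\sigma$-algebra agrees on the first chaos with orthogonal projection onto the associated Gaussian subspace, conditional independence given $\mathscr{A}_0$ is governed entirely by second moments, and second quantization lifts the first-chaos identity to the full $\sigma$-algebras. Once this reduction is secured, the covariance computation above is the entire content, recovering Klein's theorem in the present formulation.
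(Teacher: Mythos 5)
Your proposal cannot be compared against an in-paper argument, because the paper gives none: the statement is attributed to A.~Klein \cite{MR0496171} and is merely ``translated'' into the paper's notation, with no proof environment following it. Judged on its own, your reduction of everything to covariance identities is correct. The computation $\left\langle \Psi,\theta\Psi\right\rangle =\sum_{i,j}\left\langle v_{i},r\left(t_{i}+t_{j}\right)v_{j}\right\rangle$ for $\Psi=\sum_{i}\psi_{v_{i},t_{i}}$ with $t_{i}\geq0$ gives (\ref{eq:mp10}) immediately, positive definiteness of $r$ is indeed hypothesis-free, and the identities $E_{0}\psi_{v,-s}=\psi_{r\left(s\right)v,0}$ and $\left\langle \psi_{u,t},E_{0}\psi_{v,-s}\right\rangle =\left\langle u,r\left(t\right)r\left(s\right)v\right\rangle$ correctly turn the Markov identity, tested on generators, into the semigroup law (modulo the harmless normalization $r\left(0\right)=I$, which is implicit in the way the theorem is stated, since $r\left(0\right)=r\left(0\right)^{2}$ would otherwise only force $r\left(0\right)$ to be a projection).

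The one genuine gap is exactly the one you flag, and it deserves to be stated more forcefully: the paper's $E_{0},E_{\pm}$ are conditional expectations onto the full $\sigma$-algebras $\mathscr{A}_{0},\mathscr{A}_{\pm}$, acting on all of $L^{2}\left(\Omega,\mathscr{F},\mathbb{P}\right)$, so both directions of the equivalence need the Gaussian structure, not just the converse. In the forward direction you already use that $E_{0}$ maps the first chaos into the first chaos of $\mathscr{A}_{0}$ (otherwise $E_{0}\psi_{v,-s}$ need not have the form $\psi_{w,0}$ and the pairing $\left\langle \psi_{u,t},E_{0}\psi_{v,-s}\right\rangle$ cannot be evaluated through $r$); in the converse direction you need that the first-chaos identity lifts to the full operator identity, which is the statement that for jointly Gaussian families conditional independence given $\mathscr{A}_{0}$ is determined by second moments together with second quantization. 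Neither fact is available from the paper's stated hypotheses, which only require a real, mean-zero, square-integrable process; Gaussianity is assumed in Klein's original theorem and is tacitly needed here. So your outline recovers the theorem, but the Gaussian reduction is the entire nontrivial content of the ``Moreover'' clause and remains a sketch rather than a proof in your write-up.
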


\begin{rem}
In the scalar case, a list of stationary positive definite, and Gaussian
O.S.-positive, covariance functions $\left\{ r\left(t\right)\right\} _{t\in\mathbb{R}}$
includes: 
\begin{itemize}[itemsep=5pt]
\item $e^{-a\left|t\right|}$, $a>0$, fixed; 
\item ${\displaystyle \frac{1-e^{-b\left|t\right|}}{b\left|t\right|}}$,
$b>0$, fixed; 
\item ${\displaystyle \frac{1}{1+\left|t\right|}}$; 
\item ${\displaystyle \frac{1}{\sqrt{1+\left|t\right|}}e^{-\frac{\left|t\right|}{1+\left|t\right|}}}$,
$t\in\mathbb{R}$. 
\end{itemize}
But of these, only the first one $r\left(t\right):=e^{-a\left|t\right|}$
is also the generator of a Markov system; it is the Ornstein-Uhlenbeck
process. The corresponding semigroup is called the Ornstein-Uhlenbeck
semigroup and it is of independent interest in applications to stochastic
analysis (L\'evy processes) and to mathematical physics; see e.g.,
\cite{MR0343815,MR0343816,MR0436832,MR0518418,MR887102,MR3385977,MR3601243,MR3392505,MR3584630},
and also \cite{MR0496171,MR0518418,MR545025,MR832989,MR887102,MR1641554}. 
\end{rem}

\begin{rem}
As outlined in recent papers by the first named author with Neeb and
Olafsson (\cite{MR1641554,MR1767902,MR3513873}), the extension of
the results also holds in the context of Lie groups $G$, with semigroups
$S\subseteq G$. The above deals with the case $G=\mathbb{R}$, $S=[0,\infty)$. 
\end{rem}

\begin{cor}
Let $\left\{ \psi_{v,t}\right\} $ be as specified above, $\theta\,\psi_{v,t}=\psi_{v,-t}$,
and assume Osterwalder-Schrader positivity holds. Let $\mathscr{K}$
denote the Hilbert completion of $span\left\{ \psi_{v,t}\mathrel{;}t\geq0\right\} $
with respect to the induced inner product from (\ref{eq:mp10}). Then
a selfadjoint and contractive semigroup $\left\{ R\left(s\right)\mathrel{;}s\geq0\right\} $
is well defined by $R\left(s\right)\psi_{v,t}:=\psi_{v,t+s}$; i.e.,
$\left\{ R\left(s\right)\right\} _{s\ge0}$ is a selfadjoint contractive
semigroup of operators in $\mathscr{K}$, $R\left(s+s'\right)=R\left(s\right)R\left(s'\right)$. 
\end{cor}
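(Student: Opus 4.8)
The plan is to realize $\left\{ R\left(s\right)\right\} _{s\ge0}$ as the image, under the renormalization map $q\colon\mathscr{H}_{+}\to\mathscr{K}$, of the time-translation group, and then to invoke the one-parameter-group lemma of this section (the lemma stated just before \figref{L}) together with \lemref{UK}. Concretely, on the closed linear span $\mathscr{H}$ of the process $\left\{ \psi_{v,t}\right\} $ I would define $U_{s}\psi_{v,t}:=\psi_{v,t+s}$, $s\in\mathbb{R}$, and reduce the statement to the already-established fact that $\widetilde{U_{s}}$ is a selfadjoint contraction on $\mathscr{K}$.

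First I would check that $\left\{ U_{s}\right\} _{s\in\mathbb{R}}$ is a unitary one-parameter group. By stationarity (\ref{eq:mp8}) the covariance depends only on time differences, so
\[
\mathbb{E}\big(\psi_{v_{1},t_{1}+s}\,\psi_{v_{2},t_{2}+s}\big)=\left\langle v_{1},r\left(t_{1}-t_{2}\right)v_{2}\right\rangle =\mathbb{E}\big(\psi_{v_{1},t_{1}}\,\psi_{v_{2},t_{2}}\big),
\]
which shows $U_{s}$ preserves $\left\langle \cdot,\cdot\right\rangle _{\mathscr{H}}$; since $U_{s}^{-1}=U_{-s}$, each $U_{s}$ extends to a unitary, and $U_{s}U_{s'}=U_{s+s'}$ is immediate. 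Next, using $\theta\psi_{v,t}=\psi_{v,-t}$, I would verify the reflection symmetry:
\[
\theta U_{s}\theta\,\psi_{v,t}=\theta U_{s}\psi_{v,-t}=\theta\psi_{v,s-t}=\psi_{v,t-s}=U_{-s}\psi_{v,t},
\]
i.e.\ $\theta U_{s}\theta=U_{-s}=U_{s}^{*}$, which is (\ref{eq:rp2}). Finally, for $s\ge0$ and $t\ge0$ one has $t+s\ge0$, so $U_{s}\mathscr{H}_{+}\subseteq\mathscr{H}_{+}$, which is (\ref{eq:rp3}). Thus all hypotheses of the one-parameter-group lemma are met.

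With these verified, the lemma yields that $S_{s}:=\widetilde{U_{s}}\colon\mathscr{K}\to\mathscr{K}$ is a selfadjoint contraction semigroup for $s\ge0$, the selfadjointness and contractivity being exactly \lemref{UK}. It then remains to match definitions: by (\ref{eq:rp8}), $\widetilde{U_{s}}\,q\left(\psi_{v,t}\right)=q\left(U_{s}\psi_{v,t}\right)=q\left(\psi_{v,t+s}\right)$, so under the identification $\psi_{v,t}\leftrightarrow q\left(\psi_{v,t}\right)$ the operator $R\left(s\right)$ of the statement coincides with $\widetilde{U_{s}}$, and $R\left(s+s'\right)=R\left(s\right)R\left(s'\right)$ follows from $U_{s+s'}=U_{s}U_{s'}$. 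The one genuinely substantive point — and the step I would be most careful about — is well-definedness on the quotient and completion: a priori $R\left(s\right)$ is prescribed only on representatives $\psi_{v,t}$, and one must see that it descends past the null space $\mathscr{N}=\left\{ h_{+}\mid\left\langle h_{+},\theta h_{+}\right\rangle =0\right\} $ and extends to all of $\mathscr{K}$. This is precisely what the contractivity estimate of \lemref{UK} delivers (a contraction creates no new null vectors and passes to limits in the $\mathscr{K}$-norm); note also that only the forward-time operators $s\ge0$ survive as bounded operators on $\mathscr{K}$, reflecting that $U_{s}$ preserves $\mathscr{H}_{+}$ only for $s\ge0$.
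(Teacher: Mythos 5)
Your proposal is correct and takes essentially the route the paper intends: the paper's own proof consists of the word ``Immediate'' plus the covariance identity $\left\langle R\left(s\right)\psi_{v_{1},t_{1}},\psi_{v_{2},t_{2}}\right\rangle _{\mathscr{K}}=\left\langle v_{1},r\left(t_{1}+t_{2}+s\right)v_{2}\right\rangle _{V}$ for selfadjointness, leaving implicit exactly the reduction you spell out, namely that stationarity makes time translation a unitary one-parameter group satisfying (\ref{eq:rp2})--(\ref{eq:rp3}) so that \lemref{UK} and the one-parameter-group lemma apply. Your additional attention to well-definedness on the quotient by $\mathscr{N}$ and the extension to the completion fills in a step the paper omits.
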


\begin{proof}
Immediate. Note that 
\begin{align*}
\left\langle R\left(s\right)\psi_{v_{1},t_{1}},\psi_{v_{2},t_{2}}\right\rangle _{\mathscr{K}} & =\left\langle \psi_{v_{1},t_{1}},R\left(s\right)\psi_{v_{2},t_{2}}\right\rangle _{\mathscr{K}}\\
 & =\left\langle v_{1},r\left(t_{1}+t_{2}+s\right)v_{2}\right\rangle _{V}.
\end{align*}
\end{proof}
\begin{acknowledgement*}
The co-authors thank the following colleagues for helpful and enlightening
discussions: Professors Daniel Alpay, Sergii Bezuglyi, Ilwoo Cho,
A. Jaffe, Paul Muhly, K.-H. Neeb, G. Olafsson, Wayne Polyzou, Myung-Sin
Song, and members in the Math Physics seminar at The University of
Iowa. 

\bibliographystyle{amsalpha}
\bibliography{ref}
\end{acknowledgement*}

\end{document}